\renewcommand{\leq}{\leqslant}
\newcommand{\R}{\mathds R}
\newcommand{\I}{\mathds 1}
\def\d{{\rm d}}
\def\<{\langle}
\def\>{\rangle}
\def\R{\mathbb R}    
 \def\kk{\kappa} 
  \def\vv{\varepsilon} 
\def\<{\langle} \def\>{\rangle}  
  \def\nn{\nabla}  
\def\d{\text{\rm{d}}}   
 \def\beq{\begin{equation}}  
\def\e{\text{\rm{e}}}  \def\OO{\Omega}  
 \def\P{\mathbb P}
\def\E{\mathbb E}
\def\to{\rightarrow}
\def\8{\infty}\def\3{\triangle}
\def\1{\lesssim}
\renewcommand{\bar}{\overline}
\renewcommand{\hat}{\widehat}
\newtheorem{theorem}{Theorem}[section]
\newtheorem{lemma}[theorem]{Lemma}
\newtheorem{proposition}[theorem]{Proposition}
\newtheorem{corollary}[theorem]{Corollary}
\theoremstyle{definition}
\newtheorem{example}[theorem]{Example}
\newtheorem{remark}[theorem]{Remark}
\numberwithin{equation}{section}
\begin{document}
\allowdisplaybreaks

\title[Ergodicity of modified Euler schemes] {Geometric  ergodicity of modified Euler schemes for SDEs with super-linearity}

\author{
Jianhai Bao\qquad Mateusz B. Majka \qquad
Jian Wang}
\date{}
\thanks{\emph{J.\ Bao:} Center for Applied Mathematics, Tianjin University, 300072  Tianjin, P.R. China. \url{jianhaibao@tju.edu.cn}}

\thanks{\emph{M.B. \ Majka:} School of Mathematical and Computer Sciences, Heriot-Watt University, Edinburgh, EH14 4AS, UK.\url{m.majka@hw.ac.uk}}

\thanks{\emph{J.\ Wang:}
School  of Mathematics and Statistics \& Key Laboratory of Analytical Mathematics and Applications (Ministry of Education) \& Fujian Provincial Key Laboratory
of Statistics and Artificial Intelligence, Fujian Normal University, 350007 Fuzhou, P.R. China. \url{jianwang@fjnu.edu.cn}}

\maketitle

\begin{abstract}
As a well-known fact, the classical Euler scheme works merely for SDEs with coefficients of linear growth. In this paper, we
study a general framework of modified Euler schemes,
 which is applicable to SDEs with super-linear drifts and encompasses
 numerical methods such as the tamed Euler scheme and the truncated Euler scheme.
 On the one hand, by exploiting an approach based on the refined basic coupling, we show that
 all Euler recursions within our proposed framework are
 geometrically ergodic under a mixed probability distance (i.e., the total variation distance plus the $L^1$-Wasserstein distance) and the weighted total variation distance.  On the other hand, by utilizing
 the coupling by reflection, we
  demonstrate that the tamed Euler scheme is geometrically ergodic under the $L^1$-Wasserstein distance. In addition,
as an important application,
we provide
   a quantitative $L^1$-Wasserstein  error bound  between the exact invariant probability measure
   of an SDE with super-linearity,
   and the invariant probability measure of the tamed Euler scheme which is its numerical counterpart.

\medskip

\noindent\textbf{Keywords:} Geometric ergodicity; mixed probability distance; weighted total variation distance; $L^1$-Wasserstein distance; modified Euler
scheme;
refined basic coupling; coupling by reflection

\smallskip

\noindent \textbf{MSC 2020:} 65C30; 65C40; 60J22; 60J05
\end{abstract}
\section{Introduction and main result}
In the past few decades,
there have been significant advances in the area of numerical approximations for SDEs.
As far as  the convergence analysis in a finite-time
horizon is concerned,
the theory has progressed well beyond the classical Euler-Maruyama (EM) scheme, and
 we refer to \cite{dS,HMS} for the backward  EM scheme,  \cite{HJK,KNRS,Sabanis} regarding the tamed EM scheme, \cite{LMS,LMTW,Mao} concerning the truncated EM method, and \cite{FG,RS} with regard to the adaptive EM scheme, to name just a few.
For results on the behaviour of EM schemes in the infinite-time horizon, the reader may consult \cite{FG,LMS,TKLN}.
 In particular, recently there has been a series of works, where the authors utilized the probabilistic coupling method in order to investigate the long-time behaviour of EM schemes under the assumption of  ``dissipativity at infinity'' concerned with  the drifts of the associated SDEs.
 Based on quantitative contraction rates for Markov chains on a general state space, the exponential contractivity under the Wasserstein distance for EM schemes associated with non-degenerate SDEs driven by Brownian motions and L\'{e}vy noises was explored in \cite{EM} and \cite{HMW} by drawing on the reflection coupling and the refined basic coupling, respectively. Via the synchronous coupling, along with the construction of
 a corresponding
 metric,  the $L^2$-Wasserstein contraction for Euler schemes was established in \cite{LMM} for SDEs with a sufficiently high diffusivity. Through a mixture of  the synchronous coupling and  the reflection coupling, the   $L^1$-Wasserstein contraction for the EM scheme corresponding to  kinetic Langevin samplers with  non-convex potentials was treated in \cite{SW}. Leveraging on discrete sticky couplings, \cite{DEEGM} provided
 bounds in Wasserstein and weighted
 total variation distances between  distributions of EM schemes and  corresponding perturbation versions
 for SDEs with Lipschitz drifts, which are dissipative at large distances. Additionally, concerned with the tamed unadjusted Langevin algorithm,
 \cite{BDMS} demonstrated the $L^2$-Wasserstein non-asymptotic error bound between the exact invariant probability measure and its numerical version, for overdamped  Langevin SDEs, albeit only with strongly convex potentials.

With the exception of \cite{BDMS}, which treated tamed Euler schemes, all those results were applicable only to the case of SDEs with global Lipschitz drifts.
 Inspired by the aforementioned literature, in this work, we shall go further and focus on SDEs with super-linear drifts. For such
 SDEs, we intend to propose a novel
 framework for studying EM schemes, which generalizes the classical EM scheme and encompasses
   the tamed EM scheme  and the truncated EM scheme, as typical candidates. Most importantly, by constructing  appropriate couplings, we shall consider geometric ergodicity  under different probability distances for the proposed EM
 iteration.
 Compared to \cite{DEEGM,EM, HMW, LMS}, our approach allows for the study of EM schemes for SDEs with super-linear drifts. In comparison with \cite{BDMS}, our approach is much more general and provides bounds in the Wasserstein distances without requiring strong convexity of the potential.

Among different types of modified EM schemes, recently there has been a lot of interest especially in tamed EM schemes, due to their applications in computational statistics and machine learning. In addition to \cite{BDMS}, we refer to  e.g. \cite{LLRS, LNSZ, NNZ, LS} for a series of papers on the Tamed Unadjusted Langevin Algorithm (TULA) and its variants. These papers are concerned with error bounds between the target probability measure and the distribution of the tamed EM scheme after a certain number of steps. Typically, the target measures of interest are of the form $\pi \propto \e^{-U}$ for some potential $U: \R^d \to \R$ and, if $U$ has growth stronger than quadratic, the classical ULA cannot be applied directly and taming is required. The majority of such error bounds in the literature rely on contraction properties of the associated SDEs (which are usually obtained via couplings or functional inequalities). However, this does not always provide precise control on the constants for the error of the discrete process;  see e.g. \cite[Remark 2.12]{LNSZ} or \cite[Remark 2.12]{NNZ}. In the present paper, we apply couplings directly at the level of the discrete-time scheme, which has the potential of providing sharper bounds, with a better control of the constants. See \cite{MMS} for related results in the classical case of $U$ with quadratic growth (i.e., Lipschitz $\nabla U$), where bounds for  ULA and Stochastic Gradient Langevin Dynamics (SGLD) algorithms were obtained by applying couplings directly at the level of the discrete process, in contrast to the majority of the literature. We believe that the results obtained in the present paper will provide an opportunity for obtaining sharper bounds for algorithms such as TULA. However, this will require a detailed analysis and is beyond the scope of the present work, whose focus is instead on providing a general framework for analysing ergodicity of modified EM schemes.

Consider an SDE on $\R^d:$
\begin{equation}\label{E1}
\d X_t=b(X_t)\,\d t+ \d W_t,
\end{equation}
where $b:\R^d\to\R^d$ is measurable,
and $(W_t)_{t\ge0}$ is a  $d$-dimensional Brownian motion on the filtered probability space
$(\OO,\mathscr F,(\mathscr F_t)_{t\ge0},\P)$. Throughout the paper, we shall assume that the SDE \eqref{E1} is strongly well-posed under suitable conditions (e.g., $b$ is locally Lipschitz and satisfies a Lyapunov condition; see, for instance,  \cite[Theorem 3.1.1]{PR}).

 In most
  scenarios, SDEs under consideration are unsolvable explicitly unless they  possess certain special structures. Based on the  point of view above,
plenty of numerical schemes  are proposed  to approximate numerically SDEs  and simulate with the aid of  computers.
Concerning  the SDE \eqref{E1}, we put forward the following explicit discretization scheme: for the iteration step number   $n\ge0$ and the step size $\delta>0$,
\begin{equation}\label{E2}
X_{(n+1)\delta}^{\delta}=\pi^{(\delta)}(X_{n\delta}^\delta)+b^{(\delta)}\big(\pi^{(\delta)}(X_{n\delta}^\delta)\big)\delta +\triangle W_{n\delta},
\end{equation}
where $\triangle W_{n\delta}:=W_{(n+1)\delta}-  W_{n\delta}$ means the  increment (with the length $\delta$) of $(W_t)_{t\ge0}$;
the mapping $\pi^{(\delta)}:\R^d\to\R^d$ is contractive, that is,
\begin{align}\label{E3}
\big|\pi^{(\delta)}(x)-\pi^{(\delta)}(y)\big|\le |x-y|,\quad x,y\in\R^d;
\end{align}
$b^{(\delta)}:\R^d\to\R^d$ is a modified version of $b$ so that, for each fixed   $x\in\R^d$, $|b^{(\delta)}(x)-b(x)|\rightarrow0$ as $\delta\rightarrow0$. Since Brownian motions have the self-similar property, there exist i.i.d. $d$-dimensional standard Gaussian random variables $(\xi_n)_{n\ge1}$, supported on $(\OO,\mathscr F, \P)$,
such that, for each integer $n\ge0$, $\triangle W_{n\delta}$  and $\delta^{{1}/{2}}\xi_{n+1}$ are distributed identically.
 Thus, in \eqref{E2}, by replacing $\triangle W_{n\delta}$ with $\delta^{{1}/{2}}\xi_{n+1}$, we obtain the following modified  EM  scheme: for  any $\delta>0$ and integer $n\ge0$,
\begin{equation}\label{EW}
X_{(n+1)\delta}^{\delta}=\pi^{(\delta)}(X_{n\delta}^\delta)+b^{(\delta)}\big(\pi^{(\delta)}(X_{n\delta}^\delta)\big)\delta +\delta^{{1}/{2}} \xi_{n+1}.
\end{equation}
 At first glance,  the stochastic algorithm  \eqref{EW} is    a
 bit unusual. Nevertheless, it
 provides a flexible framework that incorporates several well-known stochastic algorithms. Indeed, as we will discuss in detail in Section \ref{sec2}, our framework covers:
 \begin{itemize}
 	\item The classical EM scheme (Subsection \ref{subsec:classicalEM}), by taking $\pi^{(\delta)}(x) = x$ and $b^{(\delta)}(x) = b(x)$;
 	\item The tamed EM scheme (Subsection \ref{subsec:tamedEM}), by taking $\pi^{(\delta)}(x) = x$ and $b^{(\delta)}(x) = \frac{b(x)}{1+\delta^{\theta}\varphi(|x|)}$, where $\theta \in (0,1/2)$ and $\varphi : [0,\infty) \to [0,\infty)$ is a strictly increasing and  continuous function with $\varphi(0) = 0$ and a polynomial growth;
 	\item The truncated EM scheme (also known in the literature as the projected EM scheme; Subsection \ref{sec2.3}), by taking $\pi^{(\delta)}(x) = \frac{1}{|x|}\left( |x| \wedge \varphi^{-1}(\delta^{-\theta}) \right) x \I_{\{ |x|>0 \}}$, where $\theta \in (0,1/2)$ and $\varphi : [0,\infty) \to [0,\infty)$ is a  strictly increasing and continuous function, whereas $b^{(\delta)}(x) = b(x)$.
 \end{itemize}

\subsection{Geometric ergodicity under a mixed   probability distance}
In this subsection,  we assume that
\begin{enumerate}
\item[(${\bf H}_1$)]there exist constants $R, C_R,K_R\ge1$,   $\theta\in(0,1/2)$, and $\delta_0\in(0,1]$ such that for
all
 $\delta\in(0,\delta_0]$, and all  $x,y\in\R^d,$
\begin{align}\label{EW4}
\big|b^{(\delta)}(\pi^{(\delta)}(x))-b^{(\delta)}(\pi^{(\delta)}(y))\big|\le \big(C_R \I_{\{x\in B_R\}\cap\{y\in B_R\}}+K_R\delta^{-\theta}\I_{\{x\in B_R^c\}\cup\{y\in B_R^c\}}\big)|x-y|;
\end{align}
\item[(${\bf H}_2$)] for the threshold
$R\ge1$
 above,
there exists a  constant   $ K^*_R >0$   such that for all $\delta\in(0,\delta_0]$, and $x,y\in\R^d $  with $x\in B_R^c$  or $y\in B_R^c$,
\begin{align}\label{EW5}
\big\< \pi^{(\delta)}(x)-\pi^{(\delta)}(y), b^{(\delta)}(\pi^{(\delta)}(x))-b^{(\delta)}(\pi^{(\delta)}(y))\big\>\le -K^*_R \big| \pi^{(\delta)}(x)-\pi^{(\delta)}(y)\big|^2,
\end{align}
where $B_R $ denotes the closed ball with radius $R$ and center ${\bf0}\in\R^d,$ and $B_R^c$ means its complement.
\end{enumerate}

Before we proceed,  we make some comments on Assumptions $({\bf H}_1)$ and $({\bf H}_2)$.
\begin{remark}
Assumption $({\bf H}_1)$ demonstrates that the composition of $b^{(\delta)}$ and $\pi^{(\delta)}$ satisfies
a global Lipschitz condition, in which, outside of the closed ball $B_R$, the Lipschitz constant depends on the step size.
Inequality \eqref{EW5}
  reveals that  $b^{(\delta)}$ is $K_R^*$-dissipative outside of $B_R$, where, most importantly,  $K_R^*$ is
  independent of
  the step size  $\delta.$
A more intuitive counterpart of \eqref{EW5} would be the following one:
 for any $x,y\in\R^d $  with $x\in B_R^c$  or $y\in B_R^c$,
\begin{align}\label{EWR}
\big\<x-y, b^{(\delta)}(x)-b^{(\delta)}(y)\big\>\le -K^*_R |x-y|^2.
\end{align}
Nevertheless, for the proof of Theorem \ref{thm} below, the technical condition \eqref{EW5}  is essential.
Since the precise expression of $\pi^{(\delta)}$ is unknown, $x\in B_R^c$ does not
automatically imply $\pi^{(\delta)}(x)\in B_R^c$.
 Hence, \eqref{EW5} cannot be
derived from  \eqref{EWR} so, in Assumption $({\bf H}_1)$, \eqref{EW5} (in lieu of \eqref{EWR}) is in force.
However, in some specific cases we can impose \eqref{EWR} as long as $\pi^{\delta}$ possesses an explicit form;
see Section \ref{sec2.3} for relevant details. Last but not least,
 in order to demonstrate the robustness of Assumptions $({\bf H}_1)$ and $({\bf H}_2)$, we discuss in detail in Section \ref{sec2} how to verify them for the classical EM scheme, the tamed EM scheme, and the truncated EM scheme.
\end{remark}

In the following paragraph, we introduce some
notations.
Let $\nu(\d z)$ be the law of the $d$-dimensional standard Gaussian random variables $(\xi_n)_{n\ge1}$, i.e.,
 $$\nu(\d z)=(2\pi)^{-{d}/{2}}\e^{-{|z|^2}/{2}}\,\d z,   $$
and,
 for $x\in\R^d$, let $\nu_x(\d z)=(\nu\wedge (\delta_x*\nu))(\d z),$ which stands for
the common part of $\nu(\d z)$ and $(\delta_x*\nu)(\d z)$ (the convolution between $\nu(\cdot)$ and Dirac's delta measure $\delta_x(\cdot)$).
 A direct calculation shows that for all $x\in\R^d,$
 \begin{align*}
 \nu_x(\d z)=(2\pi)^{-{d}/{2}}\Big(\e^{-{|z|^2}/{2}}\wedge \e^{- {|z-x|^2}/{2}}\Big)\,\d z.
 \end{align*}
For any $r\ge0,$ define
$$J(r)=\inf_{|u|\le r}\nu_u(\R^d).$$
It is easy to see that $J(r)>0$ for any finite number  $r \ge0$
 by making use of properties of the Gaussian distribution $\nu$.
Additionally, we define the distance function
\begin{align}\label{EW1}
 \rho_1(x,y)=  \I_{\{x\neq y\}}+ |x-y|,\quad x,y\in\R^d,
\end{align}
and  set
\begin{equation}\label{EE6*}
\delta_1:=\delta_0\wedge\big(K_R^*/K_R^2\big)^{\frac{1}{1-2\theta}}\wedge  (2K_R^*)^{-1}\wedge (4RC_R )^{-2},
\end{equation}
where the parameters $R,C_R,K_R\ge1$,
 $\delta_0>0$,
as well as $K_R^*>0$
 are given in $({\bf H}_1)$ and $({\bf H}_2)$, respectively.
At last,  on some occasions,
we shall write $(X_{n\delta}^{\delta,x})_{n\ge0}$ instead of $(X_{n\delta}^{\delta})_{n\ge0}$   once   the initial value   $X_0^\delta=x$ is to be emphasized.

For any lower semi-continuous function $\rho: \mathbb{R}^d\times \mathbb{R}^d \to \mathbb{R}_+$, and for all probability measures $\mu$ and $\nu$ on $\R^d$, we define the $L^1$-Wasserstein (pseudo-)distance induced by $\rho$ as
\begin{equation}\label{eq:defWasserstein}
 \mathbb W_{\rho}(\mu,\nu):= \inf_{\pi \in \Pi(\mu,\nu)} \int_{\mathbb{R}^d\times \mathbb{R}^d} \rho(x,y) \pi(\d  x,\d  y)  \,,
\end{equation}
where $\Pi(\mu,\nu)$ represents the set of all couplings between the probability measures $\mu$ and $\nu$.

The main result in this subsection is stated as follows, which shows that
$(X_{n\delta}^{\delta })_{n\ge0}$ is geometrically ergodic under the Wasserstein distance induced by the cost function $\rho_1$ defined in \eqref{EW1}.

\begin{theorem}\label{thm}
Assume that \eqref{E3},
 $({\bf H}_1)$ and $({\bf H}_2)$ hold. Then,  there exists a constant $C_1^* >0$ such that  for any $\delta\in(0, \delta_1]$, $x,y\in\R^d,$ and integer $n\ge0,$
\begin{align}\label{W}
\mathbb W_{\rho_1}\big(\mathscr L_{X_{n\delta}^{\delta,x}},\mathscr L_{ X_{n\delta}^{\delta,y}}\big)\le C_1^* \e^{-\lambda_1 n\delta}\rho_1(x,y),
\end{align}
 where
 $\mathbb W_{\rho_1}$ stands for the Wasserstein distance induced by the cost function $\rho_1$ defined in \eqref{EW1}, and $\lambda_1:=\lambda_{11}\wedge\lambda_{12}\wedge\lambda_{13}\in(0,1) $  with
 \begin{align}\label{EW*}
\lambda_{11}:=\frac{aJ(1) }{4(a+2 )},\quad \lambda_{12}: =\frac{4RC_Rc_*\e^{-2Rc_*} }{ a+2 }  \quad \mbox{ and }
\quad \lambda_{13}: =\frac{ R c_*K_R^*\e^{-2Rc_* }  }{a+2}
\end{align}
with
\begin{align}\label{EP6}
c_*:=\frac{32RC_R(1+C_R)^2}{ J(1)}\quad \mbox{ and } \quad a:=\frac{8c_*}{J(1)}+2C_R +
K_R^*.
\end{align}
\end{theorem}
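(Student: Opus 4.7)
The plan is to establish a one-step contraction
$$\E\bigl[\tilde\rho(X_{(n+1)\delta}^{\delta,x}, X_{(n+1)\delta}^{\delta,y})\mid X_{n\delta}^{\delta,x}=x,\, X_{n\delta}^{\delta,y}=y\bigr] \leq (1-\lambda_1\delta)\,\tilde\rho(x,y)$$
for an auxiliary semimetric $\tilde\rho$ that is equivalent to $\rho_1$, and then iterate. Guided by the explicit form of the constants in \eqref{EW*}--\eqref{EP6}, I would take
$$\tilde\rho(x,y) := \I_{\{x\neq y\}} + a\,F(|x-y|),\qquad F(r):= \int_0^{r\wedge 2R} \e^{-c_* s}\,\d s + \e^{-2Rc_*}(r-2R)^+,$$
so that $F$ is concave and strictly increasing with $F(r)\asymp r$; hence there exist constants $c_1,c_2>0$ such that $c_1 \rho_1 \leq \tilde\rho \leq c_2\rho_1$, and \eqref{W} will follow from the one-step bound with $C_1^* = c_2/c_1$.

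The coupling would be the refined basic coupling applied to the Gaussian increments of \eqref{EW}. Writing
$$u := \pi^{(\delta)}(x) - \pi^{(\delta)}(y) + \delta\bigl[b^{(\delta)}(\pi^{(\delta)}(x)) - b^{(\delta)}(\pi^{(\delta)}(y))\bigr],$$
the next-step displacement equals $u + \delta^{1/2}(\xi - \xi')$ for the two driving Gaussians. A maximal coupling between $\mathscr L(\xi)$ and $\mathscr L(\xi + u/\delta^{1/2})$ can force $\xi' = \xi + u/\delta^{1/2}$ with probability at least $J(|u|/\delta^{1/2})$, on which event $X_{(n+1)\delta}^{\delta,x} = X_{(n+1)\delta}^{\delta,y}$; on the complementary event the remaining piece of the coupling keeps the displacement close to $u$.

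The one-step bound then follows by case analysis. If $x, y \in B_R$, $({\bf H}_1)$ and \eqref{E3} give $|u|\leq(1+C_R\delta)|x-y|\leq 2R(1+C_R)$, and the constraint $\delta_1\leq(4RC_R)^{-2}$ ensures $|u|/\delta^{1/2}\leq 1$, so the meeting probability is at least $J(1)$; the resulting drop in the indicator part of $\tilde\rho$ outweighs the mild expansion in the $F$-part and delivers rate $\lambda_{11}$. If instead $x\in B_R^c$ or $y\in B_R^c$, $({\bf H}_2)$ combined with the $\delta^{-\theta}$ Lipschitz bound from $({\bf H}_1)$ gives
$$|u|^2 \leq (1 - 2K_R^*\delta + K_R^2\delta^{2-2\theta})|\pi^{(\delta)}(x) - \pi^{(\delta)}(y)|^2,$$
whose quadratic-in-$\delta$ error is absorbed by $\delta_1 \leq (K_R^*/K_R^2)^{1/(1-2\theta)}\wedge(2K_R^*)^{-1}$; concavity of $F$ then converts the linear-in-$\delta$ dissipative gain into contraction of $F(|\cdot|)$, yielding rate $\lambda_{12}$ when $|x-y|>2R$ and rate $\lambda_{13}$ when $|x-y|\leq 2R$ with at least one point outside $B_R$. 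The minimum of the three rates is $\lambda_1$, and iterating via $(1-\lambda_1\delta)^n \leq \e^{-\lambda_1 n\delta}$ completes the proof.

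The main obstacle is the joint calibration of $c_*$ and $a$. The weight $\e^{-c_* s}$ must be steep enough on $[0,2R]$ that the $F$-growth produced by the bound $|u|\leq(1+C_R\delta)|x-y|$ is dominated by the meeting gain $J(1)$ and by the dissipative gain; however, $c_*$ cannot be so large that the residual slope $F'(r)=\e^{-2Rc_*}$ for $r\geq 2R$ becomes negligible, since this slope is what controls the $L^1$-Wasserstein contraction far from the diagonal. The explicit values $c_* = 32RC_R(1+C_R)^2/J(1)$ and $a = 8c_*/J(1) + 2C_R + K_R^*$ are precisely the threshold at which these competing constraints balance, and the three terms in $\lambda_1 = \lambda_{11}\wedge\lambda_{12}\wedge\lambda_{13}$ reflect the three distinct contraction mechanisms: meeting inside $B_R$, dissipativity far from the diagonal, and dissipativity near the boundary of $B_R$.
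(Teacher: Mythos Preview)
Your overall architecture—an auxiliary metric $\tilde\rho$ equivalent to $\rho_1$, a coupling of the Gaussian increments, and a one-step contraction iterated to get \eqref{W}—matches the paper's. But the core of your case analysis contains a genuine gap.

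The claim that for $x,y\in B_R$ one has $|u|/\delta^{1/2}\le 1$ is false. From $|u|\le (1+C_R\delta)|x-y|$ and $|x-y|\le 2R$ you only get $|u|\lesssim R$, so $|u|/\delta^{1/2}\to\infty$ as $\delta\to 0$; a maximal coupling of $\xi$ and $\xi+u/\delta^{1/2}$ then meets with probability $J(|u|/\delta^{1/2})\to 0$. The constraint $\delta_1\le(4RC_R)^{-2}$ does not rescue this—it is there for a different purpose (see below). Consequently your ``meeting inside $B_R$'' mechanism does not produce a $\delta$-uniform contraction rate on the whole ball, and the proposal as written fails on the regime $\delta^{1/2}\lesssim |x-y|\le 2R$.

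What the paper does instead is a three-regime split in $|x-y|$, not in membership in $B_R$. The refined basic coupling is used with a threshold $\kappa=\delta^{1/2}$: the $Y$-increment attempts a jump of size $(\hat z^\delta)_\kappa$, not the full $\hat z^\delta$. This guarantees a meeting probability $\ge\tfrac12 J(1)$ \emph{only} when $|x-y|\le r_0:=\delta^{1/2}/(1+C_R)$; this is Case~(i) and gives $\lambda_{11}$. In the intermediate regime $r_0\le|x-y|\le 2R$ there is neither useful meeting probability nor dissipativity (the drift may expand by the factor $1+C_R\delta$). The paper obtains contraction here from a \emph{second-order} Taylor expansion of $f$: the coupling produces a one-step fluctuation in the distance of size at least $\delta^{1/2}/(1+C_R)$ with probability $\ge\tfrac12 J(1)$ (this is where the truncation $\kappa=\delta^{1/2}$ and the bound $\delta\le(4RC_R)^{-2}$ are actually used), and concavity $f''(r)=-c_*^2\e^{-c_*r}$ converts this into a negative term of order $c_*^2 J(1)\delta/(1+C_R)^2$. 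The choice $c_*=32RC_R(1+C_R)^2/J(1)$ is calibrated so that this second-order gain dominates the first-order loss $4RC_Rc_*\delta\e^{-c_*|x-y|}$, yielding $\lambda_{12}$. Finally, Case~(iii) ($|x-y|>2R$) uses the dissipativity bound $|\hat z^\delta|\le(1-K_R^*\delta/2)^{1/2}|x-y|$ exactly as you describe, giving $\lambda_{13}$. Your proposal is missing the second-order concavity mechanism entirely; without it the intermediate regime cannot be handled.
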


 Evidently,
 Theorem \ref{thm} implies the following corollary.
\begin{corollary}\label{cor}
Assume  that \eqref{E3},
 $({\bf H}_1)$ and $({\bf H}_2)$ hold. Then, there exists a constant $C_1^*>0$ such that
 for any $\delta\in(0, \delta_1]$, $x,y\in\R^d,$ and $n\ge0,$
\begin{align}\label{Cor1}
\big\|\mathscr L_{X_{n\delta}^{\delta,x}}-\mathscr L_{ X_{n\delta}^{\delta,y}}\big\|_{\rm{var}}+\mathbb W_1\big(\mathscr L_{X_{n\delta}^{\delta,x}},\mathscr L_{ X_{n\delta}^{\delta,y}}\big)\le  C_1^*\e^{-\lambda_1 n\delta}
\rho_1(x,y),
\end{align}
where $\lambda_1>0$ is the same as that  in Theorem $\ref{thm}$, $\|\cdot\|_{\rm{var}}$ means the total variation distance, and $\mathbb W_1$ denotes the $L^1$-Wasserstein distance.
\end{corollary}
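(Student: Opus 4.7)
The plan is to deduce the corollary directly from Theorem \ref{thm} by decomposing the cost function $\rho_1$ into its two natural pieces and exploiting the elementary fact that the infimum of a sum dominates the sum of infima of the summands.

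First, I would observe that by definition $\rho_1(x,y)=\I_{\{x\neq y\}}+|x-y|$, so for any coupling $\pi\in\Pi(\mu,\nu)$ one has
\begin{equation*}
\int_{\R^d\times\R^d}\rho_1(x,y)\,\pi(\d x,\d y)=\int_{\R^d\times\R^d}\I_{\{x\neq y\}}\,\pi(\d x,\d y)+\int_{\R^d\times\R^d}|x-y|\,\pi(\d x,\d y).
\end{equation*}
Taking the infimum over $\pi\in\Pi(\mu,\nu)$ on the left and using the trivial bound $\inf(f+g)\ge \inf f+\inf g$ on the right, I would obtain
\begin{equation*}
\mathbb W_{\rho_1}(\mu,\nu)\ge \inf_{\pi\in\Pi(\mu,\nu)}\int \I_{\{x\neq y\}}\,\pi(\d x,\d y)+\inf_{\pi\in\Pi(\mu,\nu)}\int |x-y|\,\pi(\d x,\d y).
\end{equation*}

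Next I would identify each of the two infima on the right-hand side. The second is, by definition, the $L^1$-Wasserstein distance $\mathbb W_1(\mu,\nu)$. For the first, I would invoke the coupling representation of the total variation distance: for probability measures $\mu,\nu$ on $\R^d$,
\begin{equation*}
\|\mu-\nu\|_{\rm var}=\inf_{\pi\in\Pi(\mu,\nu)}\pi(\{(x,y):x\neq y\})=\inf_{\pi\in\Pi(\mu,\nu)}\int \I_{\{x\neq y\}}\,\pi(\d x,\d y),
\end{equation*}
which is the classical fact that the TV distance equals the minimal probability of disagreement under a coupling. Combining these identifications gives the pointwise domination
\begin{equation*}
\|\mu-\nu\|_{\rm var}+\mathbb W_1(\mu,\nu)\le \mathbb W_{\rho_1}(\mu,\nu).
\end{equation*}

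Finally, applying this inequality to $\mu=\mathscr L_{X_{n\delta}^{\delta,x}}$ and $\nu=\mathscr L_{X_{n\delta}^{\delta,y}}$, and then substituting the upper bound \eqref{W} from Theorem \ref{thm}, I would conclude
\begin{equation*}
\big\|\mathscr L_{X_{n\delta}^{\delta,x}}-\mathscr L_{X_{n\delta}^{\delta,y}}\big\|_{\rm var}+\mathbb W_1\big(\mathscr L_{X_{n\delta}^{\delta,x}},\mathscr L_{X_{n\delta}^{\delta,y}}\big)\le \mathbb W_{\rho_1}\big(\mathscr L_{X_{n\delta}^{\delta,x}},\mathscr L_{X_{n\delta}^{\delta,y}}\big)\le C_1^*\e^{-\lambda_1 n\delta}\rho_1(x,y),
\end{equation*}
which is exactly \eqref{Cor1}. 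There is no substantial obstacle here; the only subtlety is recognising the coupling characterisation of the total variation distance and noting that both TV and $\mathbb W_1$ can be simultaneously bounded by a single Wasserstein distance induced by the additive cost $\rho_1$.
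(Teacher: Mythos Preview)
Your proof is correct and is exactly the argument the paper has in mind; the paper simply asserts that the corollary follows ``evidently'' from Theorem~\ref{thm} without writing out the decomposition, and you have supplied the details (the coupling representation of total variation and the superadditivity of the infimum) that make the implication transparent.
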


Note that Corollary \ref{cor} shows that $(X_{n\delta}^\delta)_{n\ge0}$ is exponentially decaying  under the total variation distance and the standard $L^1$-Wasserstein distance. However, it is not exponentially contractive in either distance, since the term $\rho_1(x,y)$ on the right hand side of \eqref{Cor1} cannot be controlled from above by either only $c \I_{\{ x \neq y\}}$  or only  $c|x-y|$ for some constant $c > 0$ (one needs to use the sum of both terms). In particular, we refer to  the introductory  section of   \cite{LMM} for a recent discussion on the contractivity results for classical Euler-Maruyama schemes that are currently available in the literature, as well as on the applications of such results, where it is important to have contraction rather than just upper bounds. In the next subsections, we turn our attention to showing exponential contractivity (rather than only exponential decay).

\subsection{Geometric ergodicity under the additive Wasserstein  distance}
 In this subsection, we turn to investigate the
 exponential contractivity
 of $(X_{n\delta}^\delta)_{n\ge0}$ under the weighted total variation distance, which is equivalent to the additive Wasserstein distance.
 More precisely, we work with the distance function:
 \begin{align}\label{rho2}
 \rho_2(x,y):=(2+|x|^2+|y|^2)\I_{\{x\neq y\}},\quad x,y\in\R^d.
 \end{align}

To begin, we introduce some additional notations.
We assume
 \begin{align}\label{binfty}
b^{(\8)}_0:=\sup_{\delta\in(0,\delta_0]}\big|b^{(\delta)}({\bf0})\big|<\8.
\end{align}
 Define the subset $\mathcal D$ of $\R^d\times\R^d$ by
\begin{align}\label{EP4}
\mathcal D=\big\{(x,y)\in\R^d\times\R^d:2+ |x|^2+|y|^2<8c_0/K_R^*\big\},
\end{align}
where
\begin{equation}\label{EP3}
\begin{split}
c_0:=d+&\big((1+2C_R+2C_R^2+K_R^*/2)R^2+3\big(b^{(\8)}_0\big)^2\big)\\
&\vee\big(1+1/(2^{1-2\theta}-1)+2/K_R^*\big)\big(b^{(\8)}_0\big)^2\big).
\end{split}
\end{equation}
It is easy to see that the diameter of $\mathcal D$ is finite, i.e.,
\begin{align*}
r_{\mathcal D}:=\sup_{ (x,y)\in \mathcal D }|x-y|<\8.
\end{align*}
Furthermore, in the following, we shall  stipulate
\begin{align}\label{T0}
c_\star =\frac{8}{J(1)}C_R(1+C_R)^2(1+r_{\mathcal D})
\end{align}
and fix respectively the positive parameters $\vv_\star$ and $a$ as follows:
\begin{align}\label{T3}
\vv_\star:=\frac{J(1)c_\star^2  \e^{-c_\star(1+r_{\mathcal D }) }}{32c_0(1+C_R)^2}\quad   \mbox{ and } \quad    a:=\frac{4}{J(1)}( c_\star+2c_0\vv_\star)+2\Big(\frac{2}{K_R^*}\vee1\Big)c_0c_\star\vv_\star+\frac{1}{4}K_R^*c_\star.
\end{align}
Finally, we  set
\begin{align*}
\delta_2:=\delta_0\wedge  \Big(\frac{1}{2}\big(K_R^*/K_R^2\big)^{\frac{1}{1-2\theta}}\Big)\wedge  \frac{a J(1)}{2K_R^*(a+1)}\wedge (4RC_R )^{-2}.
\end{align*}

The main result in this subsection is the following.

\begin{theorem}\label{add}
Assume that \eqref{E3} with $\pi({\bf0})={\bf0}$,
 $({\bf H}_1)$,
$({\bf H}_2)$ and \eqref{binfty} hold.
 Then,   there exists a constant $C_2^*>0$ such that for all $\delta\in(0, \delta_2]$, $x,y\in\R^d,$ and integer $n\ge0,$
\begin{align}\label{ER6}
\mathbb W_{\rho_2}\big(\mathscr L_{X_{n\delta}^{\delta,x}},\mathscr L_{ X_{n\delta}^{\delta,y}}\big)\le C_2^*\e^{-\lambda_2 n\delta}\rho_2(x,y)
\end{align}
 where $\mathbb W_{\rho_2}$
 denotes the Wasserstein distance induced by the metric function
 $\rho_2$ defined in \eqref{rho2},
and    $\lambda_2:=\lambda_{21}\wedge\lambda_{22}\wedge\lambda_{23}\in(0,1)$ with
\begin{align*}
 \lambda_{21}:=\frac{1}{2}K_R^*,\quad \lambda_{22}:=\frac{2c_0c_\star\vv_\star}{a+1}  \quad \mbox{ and } \quad \lambda_{23}:=\frac{K_R^* \vv_\star }{4(a+1)}.
\end{align*}
\end{theorem}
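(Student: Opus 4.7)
The plan is to combine the refined basic coupling from the proof of Theorem~\ref{thm} with a quadratic Lyapunov estimate for the pair process, carried out on a single semimetric comparable to $\rho_2$. Set $V(x,y):=2+|x|^2+|y|^2$ and introduce
\[
\widetilde f(x,y):=(1+\vv_\star V(x,y))\,\psi(\rho_1(x,y)),
\]
where $\psi$ is a bounded concave increasing profile with $\psi(0)=0$, of the same type used to prove Theorem~\ref{thm} (concretely, an exponentially tilted integral with decay rate $c_\star$). Since $\psi(\rho_1(x,y))\asymp \I_{\{x\ne y\}}$ and $1+\vv_\star V\asymp V$, the semimetric $\widetilde f$ is two-sided equivalent to $\rho_2$ up to multiplicative constants, so the theorem reduces to establishing a one-step estimate $\E[\widetilde f(X_{(n+1)\delta}^{\delta,x},X_{(n+1)\delta}^{\delta,y})]\le(1-\lambda_2\delta)\widetilde f(x,y)$, which iterates to \eqref{ER6}.

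The first ingredient is a drift inequality for $V$. Squaring the update \eqref{EW} and using \eqref{E3} together with $\pi^{(\delta)}({\bf 0})={\bf 0}$, the dissipativity \eqref{EW5}, the Lipschitz bound \eqref{EW4} and \eqref{binfty}, I will establish, for $\delta\in(0,\delta_2]$,
\[
\E\bigl[V(X_{(n+1)\delta}^{\delta,x},X_{(n+1)\delta}^{\delta,y})\bigm|\mathcal F_{n\delta}\bigr]\le(1-K_R^*\delta)V(x,y)+2c_0\delta.
\]
The constant $c_0$ in \eqref{EP3} is precisely what is needed here: the two alternatives in its definition correspond to the bounded regime $|x|,|y|\le R$, where \eqref{EW5} need not hold and one relies on \eqref{EW4} and \eqref{binfty}, and to the unbounded regime $|x|\vee|y|>R$, where \eqref{EW5} supplies the dissipative contribution. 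In particular, when $(x,y)\notin\mathcal D$ (i.e., $V\ge 8c_0/K_R^*$), the residual $2c_0\delta$ is absorbed and $\E V'\le(1-\tfrac12 K_R^*\delta)V$, which is the origin of the rate $\lambda_{21}=\tfrac12K_R^*$.

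The second ingredient is the refined basic coupling, which produces $(X',Y')$ with $\P(X'=Y')\ge\nu_u(\R^d)$, where $u:=\pi^{(\delta)}(x)-\pi^{(\delta)}(y)+\bigl(b^{(\delta)}(\pi^{(\delta)}(x))-b^{(\delta)}(\pi^{(\delta)}(y))\bigr)\delta$. On $\mathcal D$, the diameter bound $|x-y|\le r_\mathcal D$ together with \eqref{EW4} and the smallness of $\delta_2$ forces $|u|\le 1$, so $\P(X'=Y')\ge J(1)$. Choosing $\psi$ with exponential decay rate $c_\star$ as dictated by \eqref{T0} converts this coalescence probability into a genuine contraction of $\psi(\rho_1)$ on $\mathcal D$; this mechanism produces $\lambda_{22}$. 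The parameter $\vv_\star$ in \eqref{T3} is tuned so that the multiplicative factor $1+\vv_\star V$ neither ruins this contraction on $\mathcal D$ nor dilutes the Lyapunov decay off $\mathcal D$, yielding the additional rate $\lambda_{23}=K_R^*\vv_\star/(4(a+1))$.

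Combining the two ingredients on $\widetilde f$, I split the one-step analysis into three regimes: coalescence on $\mathcal D\cap\{x\ne y\}$ (giving $\lambda_{22}$), pure Lyapunov contraction on $\mathcal D^c$ (giving $\lambda_{21}$), and the interface handled by the $\vv_\star V$ term (giving $\lambda_{23}$); the minimum of the three is precisely $\lambda_2$. The main obstacle is the simultaneous calibration of $\vv_\star$ and $c_\star$: inside $\mathcal D$, $V$ may transiently grow by $\sim c_0\delta$ while $\psi(\rho_1)$ drops by a definite amount, so $\vv_\star$ must be small enough (and $c_\star$ large enough) that this growth is absorbed by the coupling gain; yet $\vv_\star$ cannot be made arbitrarily small without sacrificing the Lyapunov contribution off $\mathcal D$. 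The balance point is exactly \eqref{T3}, and once it is verified, the desired contraction of $\widetilde f$ (and hence of $\rho_2$, via the two-sided equivalence) follows, completing the proof.
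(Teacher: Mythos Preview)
Your coalescence argument contains a genuine error that breaks the proposal. You write $\P(X'=Y')\ge\nu_u(\R^d)$ with $u=\hat z^\delta:=\pi^{(\delta)}(x)-\pi^{(\delta)}(y)+(b^{(\delta)}(\pi^{(\delta)}(x))-b^{(\delta)}(\pi^{(\delta)}(y)))\delta$, but the noise in \eqref{EW} is $\delta^{1/2}\xi_{n+1}$, so coalescence requires the standard Gaussian to absorb a shift of size $\delta^{-1/2}\hat z^\delta$; the correct bound is $\P(X'=Y')\le\nu_{\delta^{-1/2}\hat z^\delta}(\R^d)$ (cf.\ \eqref{EE*}). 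Since $\mathcal D$ has fixed diameter $r_{\mathcal D}$, the quantity $\delta^{-1/2}|\hat z^\delta|$ is unbounded as $\delta\to0$ for any fixed $|x-y|>0$, and the coalescence probability is \emph{not} bounded below by $J(1)$ on $\mathcal D$; no choice of $\delta_2$ fixes this. This is precisely why the paper introduces the threshold $\kk=\delta^{1/2}$ in \eqref{E4} and restricts the coalescence argument to $|x-y|\le r_0:=\delta^{1/2}/(1+C_R)$.

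Consequently your three-regime split (coalescence on $\mathcal D$, Lyapunov on $\mathcal D^c$, plus an interface) cannot work as stated. The paper's decomposition is by $|x-y|$, not by membership in $\mathcal D$: Case~(i) ($|x-y|\le r_0$) uses coalescence and yields $\lambda_{21}=K_R^*/2$; Case~(ii) ($r_0<|x-y|\le1+r_{\mathcal D}$) uses \emph{not} coalescence but the second-order concavity estimate \eqref{EP}, with $c_\star$ in \eqref{T0} chosen so that this dominates the positive drift term, yielding $\lambda_{22}$; Case~(iii) ($|x-y|>1+r_{\mathcal D}$, which forces $(x,y)\notin\mathcal D$) uses the Lyapunov drop from Lemma~\ref{Lya} and yields $\lambda_{23}$. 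Your proposal omits the second-order mechanism that carries the intermediate regime, and misattributes the rates. Finally, the paper works with the \emph{additive} auxiliary metric $\hat\rho_2(x,y)=(a+\vv_\star V(x,y))\I_{\{x\ne y\}}+h(|x-y|\wedge(1+r_{\mathcal D}))$ rather than your multiplicative $(1+\vv_\star V)\psi(\rho_1)$; the additive form, together with the truncation at $1+r_{\mathcal D}$, is what allows the $h$-contribution to be discarded cleanly in Case~(iii) while the Lyapunov term acts alone.
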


Let $\mathscr P_2(\R^d)$ be the set of probability measures on $\R^d$ with finite second moment.
According to \cite[Lemma 2.1]{HM}, the weighted total  variation distance:
\begin{align*}
\|\mu-\nu\|_{2,\rm{var}}:=\sup_{|f|\le 1+|\cdot|^2}\bigg|\int_{\R^d}f(x)\mu(\d x)-\int_{\R^d}f(x)\nu(\d x)\bigg|,\quad \mu,\nu\in\mathscr P_2(\R^d)
\end{align*}
is identical  to the Wasserstein distance $\mathbb W_{ \rho_2}$.  Thus, as a direct byproduct of Theorem \ref{add}, $(X_{n\delta}^\delta)_{n\ge0}$ is geometrically  ergodic under the weighted total variation distance $\|\cdot\|_{2,\hbox{var}}$.
\begin{corollary}
Assume that Assumptions of Theorem $\ref{add}$ hold. Then,    for all $\delta\in(0, \delta_2]$, $x,y\in\R^d$ and $n\ge0,$
\begin{align*}
\big\|\mathscr L_{X_{n\delta}^{\delta,x}}-\mathscr L_{ X_{n\delta}^{\delta,y}}\big\|_{2,\rm{var}}\le C_2^* \e^{-\lambda_2 n\delta}\rho_2(x,y),
\end{align*}
where $C_2^*>0$ and $\lambda_2\in(0,1)$ are  the same as those in Theorem $\ref{add}$, respectively.
\end{corollary}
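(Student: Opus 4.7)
The plan is to derive the corollary as an immediate consequence of Theorem \ref{add} together with the Kantorovich--Rubinstein--type identification quoted from \cite[Lemma 2.1]{HM} in the paragraph preceding the statement. That identification asserts that the weighted total variation distance $\|\cdot\|_{2,\mathrm{var}}$ coincides with the Wasserstein distance $\mathbb{W}_{\rho_2}$ on $\mathscr{P}_2(\R^d)$, so once the laws of the scheme are seen to have finite second moments, the conclusion will follow by simply reading off the bound provided by Theorem \ref{add}.

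First, I would verify that $\mathscr{L}_{X_{n\delta}^{\delta,x}} \in \mathscr{P}_2(\R^d)$ for every $x \in \R^d$ and every $n \ge 0$. This is a routine induction on $n$: the base case is trivial since the law is a Dirac mass, and for the induction step one combines the recursion \eqref{EW} with the contractivity \eqref{E3} (together with $\pi^{(\delta)}({\bf0}) = {\bf0}$), the linear growth of $b^{(\delta)} \circ \pi^{(\delta)}$ implicit in $({\bf H}_1)$ and \eqref{binfty}, and $\mathbb{E}|\xi_{n+1}|^2 = d$, to obtain a bound of the form $\mathbb{E}|X_{(n+1)\delta}^{\delta,x}|^2 \le C_\delta (1 + \mathbb{E}|X_{n\delta}^{\delta,x}|^2)$ for some finite $C_\delta$. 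Iterating yields finiteness for every $n$. Then I would invoke \cite[Lemma 2.1]{HM} to identify $\|\mu - \nu\|_{2,\mathrm{var}}$ with $\mathbb{W}_{\rho_2}(\mu,\nu)$ for any $\mu, \nu \in \mathscr{P}_2(\R^d)$, and finally apply Theorem \ref{add} to the pair $(\mathscr{L}_{X_{n\delta}^{\delta,x}}, \mathscr{L}_{X_{n\delta}^{\delta,y}})$ to conclude with the same constants $C_2^*$ and $\lambda_2$.

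No serious obstacle is expected here; the only point requiring any attention is the moment verification above, which is entirely standard and relies exclusively on hypotheses already made in Theorem \ref{add}. The substantive work, namely the Wasserstein contraction estimate, has already been carried out in Theorem \ref{add}, and the present corollary is merely a reformulation obtained by changing the ambient distance via the duality from \cite[Lemma 2.1]{HM}.
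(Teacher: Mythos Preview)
Your proposal is correct and follows exactly the approach the paper itself indicates: the corollary is presented there as a ``direct byproduct'' of Theorem~\ref{add} via the identification $\|\cdot\|_{2,\mathrm{var}}=\mathbb W_{\rho_2}$ from \cite[Lemma~2.1]{HM}, with no further argument given. Your added check that $\mathscr L_{X_{n\delta}^{\delta,x}}\in\mathscr P_2(\R^d)$ is a welcome piece of care; note that it also follows immediately from Lemma~\ref{Lya}.
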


\subsection{ Geometric ergodicity  under the $L^1$-Wasserstein distance}
In this subsection we consider the scheme  corresponding to the choice of $\pi^{(\delta)}(x) = x$ in \eqref{EW}. More precisely,
for any $\delta>0$ and integer $n\ge0,$
\begin{equation}\label{EW-}
X_{(n+1)\delta}^{\delta}=X_{n\delta}^\delta+b^{(\delta)}\big(X_{n\delta}^\delta\big)\delta +\delta^{1/2} \xi_{n+1},
\end{equation}
where $(\xi_n)_{n\ge1}$ are $d$-dimensional standard Gaussian random variables.

To investigate the  geometric  contractivity of $( X_{n\delta}^\delta )_{n\ge0}$, we
rewrite Assumptions (${\bf H}_1$) and (${\bf H}_2$)  as follows (since $\pi^{(\delta)}(x)=x$ for all $x\in \R^d$):
\begin{enumerate}
\item[(${\bf H}_1'$)]there exist constants $R, C_R,K_R\ge1$,   $\theta\in(0,1/2)$, and $\delta_0\in(0,1]$ such that for all $\delta\in(0,\delta_0]$, and   $x,y\in\R^d,$
\begin{align}\label{EW4-}
\big|b^{(\delta)}(x)-b^{(\delta)}(y)\big|\le \big(C_R \I_{\{x\in B_R\}\cap\{y\in B_R\}}+K_R\delta^{-\theta}\I_{\{x\in B_R^c\}\cup\{y\in B_R^c\}}\big)|x-y|;
\end{align}
\item[(${\bf H}_2'$)]  there is a  constant   $ K^*_R >0$   such that for all $\delta\in(0,\delta_0]$, and $x,y\in\R^d $  with $x\in B_R^c$  or $y\in B_R^c$,
\begin{align}\label{EW5-}
\< x-y, b^{(\delta)}(x)-b^{(\delta)}(y)\>\le -K^*_R | x-y|^2.
\end{align}
\end{enumerate}

To proceed,   some notations need to be introduced. For parameters $R,C_R,K_R\ge1$ and $K_R^*$ given respectively in (${\bf H}_1'$) and (${\bf H}_2'$), we
 set
\begin{align}\label{ET6}
c^*:=1+\frac{16RC_R}{r^*_0 r_0} +\frac{\ln (C_RK_R^*)}{2R}
\end{align}
where
\begin{align}\label{EW9}
r_0:=    1/2+2R(C_R\vee K_R),~  r^*_0:=\frac{1}{(2\pi)^{{1}/{2}}}\big(\e^{-\frac{9}{8}r_0^2}-\e^{-2r_0^2}\big),~ \gamma:=4\big((1/2+3R(C_R\vee K_R)\big).
\end{align}
Furthermore, we define
\begin{align}\label{EE0}
\delta_3=\delta_0\wedge\big( 2R(C_R\wedge K_R)\big)^{\frac{2}{2\theta-1}}\wedge \big(K_R^*/K_R^2\big)^{\frac{1}{1-2\theta }}\wedge\bigg(\frac{\ln 2}{c^*\gamma}\bigg)^2,
\end{align}
with the constants $\delta_0$, $R$, $C_R$, $K_R$, $K_R^*$ and $\theta$ defined in  $({\bf H}_1')$ and $({\bf H}_2')$.

The following theorem shows that $(\mathscr L_{X_{n\delta}^\delta})_{n\ge0}$ corresponding to \eqref{EW-} is exponentially contractive
 under the $L^1$-Wasserstein distance.

\begin{theorem}\label{W1}
Under  $({\bf H}_1')$ and $({\bf H}_2')$,  there exists  a constant $ C_3^* >0$ such that for all $\delta\in(0,\delta_3]$, integer $n\ge0,$
and $x,y\in\R^d,$
\begin{align}\label{W1_contractivity}
\mathbb W_1\big(\mathscr L_{X_{n\delta}^{\delta,x}},\mathscr L_{ X_{n\delta}^{\delta,y}}\big)\le C_3^*\e^{-\lambda_3 n\delta}|x-y|,
\end{align}
where $(X_{n\delta}^{\delta,x})_{n\ge0}$  is determined by \eqref{EW-} and
$\lambda_3:=\lambda_{31}\wedge\lambda_{32}\in(0,1)$ with
\begin{align}\label{EE7}
\lambda_{31}:= C_R\e^{-2Rc^* }\quad\mbox{ and }\quad \lambda_{32}:=\frac{1}{2} K_R^*\e^{-c^*(2R+\gamma)}.
\end{align}
\end{theorem}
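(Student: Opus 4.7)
\medskip

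\noindent\textbf{Plan of proof.}
The plan is to adapt Eberle's reflection coupling technique to the discrete-time scheme \eqref{EW-}, in the spirit of \cite{EM}. Throughout, write $Z_n := X^{\delta,x}_{n\delta}-X^{\delta,y}_{n\delta}$, $r_n := |Z_n|$, and $e_n := Z_n/r_n$ when $r_n>0$, and let $v_n := b^{(\delta)}(X^{\delta,x}_{n\delta}) - b^{(\delta)}(X^{\delta,y}_{n\delta})$.

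\smallskip

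\noindent\emph{Step 1 (Reflection coupling).} Couple the two copies by driving the $x$-chain with a standard Gaussian $\xi_{n+1}$ and the $y$-chain with $\xi'_{n+1} := (I-2 e_n e_n^\top)\xi_{n+1}$ as long as $r_n>0$, and by the synchronous coupling thereafter. This is an admissible Markovian coupling and gives
\begin{equation*}
Z_{n+1} = Z_n + \delta v_n + 2\sqrt{\delta}\,\langle \xi_{n+1},e_n\rangle\, e_n,
\end{equation*}
so $|Z_{n+1}|$ depends essentially on the one-dimensional random variable $\langle \xi_{n+1},e_n\rangle\sim N(0,1)$ together with the orthogonal drift component $v_n-\langle e_n,v_n\rangle e_n$.

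\smallskip

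\noindent\emph{Step 2 (Concave distance function).} Introduce a concave, non-decreasing function $f:[0,\infty)\to[0,\infty)$ of the form
\begin{equation*}
f(r) = \int_0^{r} g(s)\,\d s, \qquad g(s) := \exp\!\bigl(-c^* (s\wedge(2R+\gamma))\bigr),
\end{equation*}
with $c^*$ and $\gamma$ as in \eqref{ET6}--\eqref{EW9}. Then $g$ is non-increasing, so $f$ is concave, and $e^{-c^*(2R+\gamma)}\,r\leq f(r)\leq r$ for every $r\geq 0$; thus $f$ is equivalent to the Euclidean distance, with the explicit ratio $e^{c^*(2R+\gamma)}$. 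The ancillary constants $r_0$ and $r_0^*$ encode a lower bound on the Gaussian reflection probability of an interval of length of order $\sqrt{\delta}\,r_0$ inside $[-2r_0,2r_0]$.

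\smallskip

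\noindent\emph{Step 3 (One-step contraction).} The heart of the proof is the estimate
\begin{equation*}
\mathbb{E}\bigl[f(r_{n+1})\,\big|\,\mathscr F_{n\delta}\bigr] \leq (1-\lambda_3\delta)\, f(r_n) \qquad \text{for all } \delta\in(0,\delta_3].
\end{equation*}
I would split according to the geometric location of the pair $(X^{\delta,x}_{n\delta},X^{\delta,y}_{n\delta})$.

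\emph{Case A: both $X^{\delta,x}_{n\delta},X^{\delta,y}_{n\delta}\in B_R^c$.} Assumption $({\bf H}_2')$ yields $\langle e_n,v_n\rangle\leq -K_R^* r_n$, while $({\bf H}_1')$ gives $|v_n|\leq K_R\delta^{-\theta}r_n$. Expanding $f(r_{n+1})$ by a second-order Taylor/mean-value argument around $r_n$ and taking expectations, the linear-in-$\delta$ drift term produces a strictly negative contribution $-K_R^*\,\delta\, f'(r_n)\,r_n$; the noise term contributes a non-positive concavity correction. The restriction $\delta\leq (K_R^*/K_R^2)^{1/(1-2\theta)}$ (built into $\delta_3$) absorbs the higher-order quadratic drift term $\delta^2|v_n|^2$. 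Using the explicit lower bound $f'(r)\,r\geq e^{-c^*(2R+\gamma)}f(r)$ valid for all $r\geq 0$, the resulting rate on this case is $\lambda_{32}=\tfrac12 K_R^* e^{-c^*(2R+\gamma)}$.

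\emph{Case B: $\{X^{\delta,x}_{n\delta},X^{\delta,y}_{n\delta}\}\cap B_R\neq\emptyset$.} Here the drift might be expansive and is bounded only by $|v_n|\leq K_R \delta^{-\theta} r_n$, so contraction must come from the reflected Gaussian. I would exploit the Gaussian tail estimate: the event $\{\langle\xi_{n+1},e_n\rangle<-r_0\}$ has probability at least $r_0^*$ (this is the origin of $r_0^*$ in \eqref{EW9}), so that on this event, for $\delta\leq\delta_3$, the one-step displacement along $e_n$ reduces $r_n$ by at least $\sqrt{\delta}\,r_0$. The strict concavity of $f$ on $[0,2R+\gamma]$ then yields, after a careful Taylor expansion of $f$ around $r_n$ (keeping the second-order concavity term and controlling the remainder via the Lipschitz bound from $({\bf H}_1')$), a net multiplicative decrease of size $\lambda_{31}\delta = C_R e^{-2Rc^*}\,\delta$. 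The bound $\delta\le (\ln 2/(c^*\gamma))^2$ in \eqref{EE0} ensures that the linearisation of $r\mapsto\sqrt{r^2+\delta^2|v_n^\perp|^2}$ introduces only terms of order $\delta^2$.

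\smallskip

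\noindent\emph{Step 4 (Iteration and conclusion).} Iterating,
\begin{equation*}
\mathbb E[f(r_n)] \leq (1-\lambda_3\delta)^n f(|x-y|) \leq e^{-\lambda_3 n\delta}f(|x-y|).
\end{equation*}
Since the reflection coupling produces an admissible coupling between $\mathscr L_{X^{\delta,x}_{n\delta}}$ and $\mathscr L_{X^{\delta,y}_{n\delta}}$, and $f$ is equivalent to the Euclidean norm with ratio $e^{c^*(2R+\gamma)}$,
\begin{equation*}
\mathbb{W}_1\bigl(\mathscr L_{X^{\delta,x}_{n\delta}},\mathscr L_{X^{\delta,y}_{n\delta}}\bigr) \leq \mathbb E[r_n] \leq e^{c^*(2R+\gamma)}\,\mathbb E[f(r_n)] \leq e^{c^*(2R+\gamma)}\,e^{-\lambda_3 n\delta}\,|x-y|,
\end{equation*}
which is \eqref{W1_contractivity} with $C_3^* = e^{c^*(2R+\gamma)}$.

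\smallskip

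\noindent\emph{Main obstacle.} The delicate point is the one-step estimate in Case B: the drift is bounded only by $K_R\delta^{-\theta}r_n$ and so can in principle expand $r_n$ by an amount as large as $K_R\delta^{1-\theta}r_n$, whereas the reflected Gaussian produces a contraction of order $\sqrt{\delta}$ on an event of probability $r_0^*$. Balancing these opposing effects requires both the precise choice of the concave profile $f$ (so that the loss on the non-contracting event is offset by the gain on the contracting event via the concavity slope $e^{-c^*s}$) and the sharp step-size threshold $\delta_3$ in \eqref{EE0}, which ensures all higher-order $\delta$-remainders are absorbed. The explicit constants $c^*$, $r_0$, $r_0^*$, $\gamma$ are engineered precisely so that the case-by-case rates combine into the single exponential rate $\lambda_3=\lambda_{31}\wedge\lambda_{32}$.
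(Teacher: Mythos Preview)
Your overall strategy --- reflection coupling, concave comparison distance, one-step contraction, iteration --- is exactly the paper's. But two technical choices differ from the paper in ways that leave real gaps.

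\textbf{The case split.} You split according to whether both points lie in $B_R^c$ (Case~A) or at least one lies in $B_R$ (Case~B). The paper instead splits on $|x-y|\le 2R$ versus $|x-y|>2R$. Your Case~B is too broad: it contains configurations with one point in $B_R$ and the other arbitrarily far out in $B_R^c$, so $r_n$ can be as large as you like. For such $r_n>2R+\gamma$ your $f$ is affine ($f''=0$), so the concavity gain you invoke vanishes, and your worst-case bound $|v_n|\le K_R\delta^{-\theta}r_n$ yields no contraction. The remedy is immediate once you notice that $|x-y|>2R$ forces at least one point into $B_R^c$, so $({\bf H}_2')$ applies; hence the dissipative argument actually covers all of $|x-y|>2R$, and the reflection/concavity argument is only needed on the bounded region $|x-y|\le 2R$, where $f''<0$. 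The paper's split is built around exactly this observation.

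\textbf{The coupling.} The paper does \emph{not} use pure reflection along $e_n=Z_n/|Z_n|$. Its coupling \eqref{ER4} reflects along the \emph{post-drift} direction $\hat Z_n=\hat X_n-\hat Y_n$, and combines reflection with a truncated basic-coupling jump $(\hat Z_n)_{\kappa^*}$, $\kappa^*=r_0\sqrt\delta/2$. Two consequences: (i) reflecting along $\hat Z_n$ makes $X_\delta-Y_\delta$ collinear with $\hat z^\delta$, so there is no orthogonal component $v_n^\perp$ to linearise away; (ii) the hybrid construction yields the exact identity $\bar\E^{(x,y)}|X_\delta-Y_\delta|=|\hat z^\delta|$, which fails for your pure reflection (there $\E\big|\,|\hat z^\delta|+2\sqrt\delta\,G\,\big|>|\hat z^\delta|$ with a gap of order $\sqrt\delta$ when $|\hat z^\delta|\lesssim\sqrt\delta$). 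The paper then proves the second-moment lower bound (Lemma~\ref{lem2})
\[
\Phi(x,y)\;\ge\; r_0^*\,\sqrt\delta\,\bigl(|\hat z^\delta|\wedge\kappa^*\bigr),
\]
and it is this density-difference integral --- not a Gaussian tail probability $\P(G<-r_0)$ --- that the constant $r_0^*$ encodes. Inserting $\bar\E|X_\delta-Y_\delta|=|\hat z^\delta|$ and the $\Phi$-bound into a Taylor expansion of $\varphi$ (using $\varphi'''>0$ so that the concavity term may be evaluated at $|x-y|+\gamma\sqrt\delta$) gives the one-step contraction for $|x-y|\le 2R$ with rate $\lambda_{31}$.

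With these two corrections your outline becomes the paper's proof; as written, however, Case~B has a genuine gap for large $r_n$, and the first-moment identity you implicitly rely on is not available for the coupling you describe.
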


Before the end of this subsection, we give a remark  regarding Theorem \ref{W1}.
\begin{remark}
In Theorem \ref{W1},
we treat the geometric ergodicity of $(X_{n\delta}^{\delta})_{n\ge0}$ given by \eqref{EW-} (i.e., \eqref{EW} with $\pi^{(\delta)}(x)=x$).
Obviously, the framework \eqref{EW} is much more general than \eqref{EW-}.
It is a natural question to ask why in Theorem \ref{W1} we work only with the scheme \eqref{EW-} instead of \eqref{EW}.
The main technical reason is that in the proof of Theorem \ref{W1},
for $x,y\in\R^d,$ the distance between $\pi^{\delta}(x)+b^{(\delta)}(\pi^{\delta}(x))-(\pi^{\delta}(y)+b^{(\delta)}(\pi^{\delta}(y)))$
and $x-y$ should approach
zero as the step size goes to zero (see \eqref{ET1} for more details).
 However, this may not necessarily hold if the only thing we know about $\pi^{(\delta)}$ is that it is contractive.
 In particular, for the $\pi^{(\delta)}$ which corresponds to the truncated EM scheme defined in \eqref{ER3} below,
 this property is not satisfied. For this reason, in this subsection we consider only the scheme \eqref{EW-} rather than  \eqref{EW}.
\end{remark}

\subsection{Error bound between   invariant probability measures}\label{section1.3}
In this subsection, as an application of Theorem \ref{W1}, we provide a quantitative convergence rate between the exact invariant probability measure and the numerical counterpart associated with the underlying tamed EM scheme \eqref{EW-}.

Below, let $\varphi:[0,\infty)\to [0,\infty)$ be a continuous and strictly increasing function with $\varphi(0)=0$ and with a polynomial growth. We assume that
\begin{enumerate}
\item[(${\bf A}_1$)]there exist constants $L_1,L_2>0$
such that for all $x,y\in\R^d,$
\begin{align}\label{EE9}
|b(x)-b(y)|\le L_1\big(1+\varphi(|x|)+\varphi(|y|)\big)|x-y|,
\end{align}
and
\begin{align}\label{EE10}
\big|b(x)\varphi(|y|)-b(y)\varphi(|x|)\big|\le L_2\big(1+\varphi(|x|)+\varphi(|y|)+\varphi(|x|)\varphi(|y|)\big)|x-y|;
\end{align}
\item[(${\bf A}_2$)]there exist constants $L_3,L_4,L_5>0$ and $R\ge0$ such that  for all $x,y\in\R^d$ with $x\in B_R^c$  or $y\in B_R^c$,
\begin{align}\label{EE11}
\<x-y,b(x)-b(y)\>\le -L_3\big(1+\varphi(|x|)+\varphi(|y|)\big)|x-y|^2,
\end{align}
and
\begin{align}\label{EE12}
\big\<x-y, b(x)\varphi(|y|)-b(y)\varphi(|x|)\big\>\le \big(L_4\big(1+\varphi(|x|)+\varphi(|y|)\big)-L_5 \varphi(|x|)\varphi(|y|)\big) |x-y|^2.
\end{align}
\end{enumerate}

 From (${\bf A}_1$),  the drift $b$ is allowed to be of polynomial growth with the
 order $r\varphi(r)$.
To handle the difficulty arising from the highly nonlinear property of $b$,
we need to modify  the drift $b$  so that its corresponding  variant, written as $b^{(\delta)}$,  is of linear growth at most. By regarding $b^{(\delta)}$ as a new drift, we can construct a time discretization scheme corresponding to the SDE \eqref{E1}.
For this, we first introduce the modified (or tamed)   drift $b^{(\delta)}$ defined as below:
\begin{align}\label{E7}
b^{(\delta)}(x)=\frac{b(x)}{1+\delta^{\theta}\varphi(|x|)},\qquad x\in\R^d,
\end{align}
where $\theta\in(0,1/2)$.
Thus, the tamed EM algorithm related  to \eqref{E1} can be constructed  as follows: for $\delta>0$ and integer $n\ge0$,
\begin{align}\label{ER1}
X_{(n+1)\delta}^{\delta}= X_{n\delta}^\delta +b^{(\delta)}(X_{n\delta}^\delta)\delta+\delta^{1/2}\xi_{n+1},
\end{align}
where $(\xi_n)_{n\ge1}$ are $d$-dimensional standard Gaussian random variables.
For the parameters involved in $({\bf A}_1)$ and $({\bf A}_2)$, we
set
 \begin{align}\label{EE16}
 \delta_4:= \delta_3\wedge \bigg(\frac{L_3}{2(1+L_1^2)}\bigg)^{\frac{1}{1-\theta}}\wedge(2/L_3),
 \end{align}
where  $ \delta_3$ is defined as in \eqref{EE0} with
$$\delta_0=(L_3/(2L_4))^{\frac{1}{\theta}}\wedge1,~
C_R=(L_1\vee L_2 )(1+\varphi(R))^2,~ K_R=L_1\wedge L_2~\mbox{ and }  ~K_R^*=(L_3/2)\wedge L_5.$$

The following theorem provides an error bound between the exact invariant probability measure and the numerical version associated with \eqref{E1} and the algorithm \eqref{ER1}, respectively.

\begin{theorem}\label{IPM}
Assume  $({\bf A}_1)$ and $({\bf A}_2)$, where  $\varphi:[0,\8)\to[0,\8)$
 such that for some constants $c_*,l^*>0$,
\begin{align}\label{ED}
\varphi(r)\le c_*\big(1+r^{l^*}\big),\quad r\ge0.
\end{align}
Then,
there exists a constant $C>0$ such that for all $\delta\in(0, \delta_4],$
\begin{align}\label{EE13}
\mathbb W_1\big(\pi,\pi^{\delta}\big)\le C\delta^\theta,
\end{align}
where $\theta\in(0,1/2),$
 $\pi\in\mathscr P_1(\R^d)$ and $\pi^{\delta}\in\mathscr P_1(\R^d)$ stand respectively for the unique invariant probability measures of $(X_t)_{t\ge0}$ and $(X_{n\delta}^{\delta})_{n\ge0}$, which is determined by \eqref{ER1}.
\end{theorem}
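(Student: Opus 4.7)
The plan is a telescoping argument combining three ingredients: the discrete-time $\mathbb W_1$-contraction of \eqref{ER1} from Theorem \ref{W1}; an analogous continuous-time $\mathbb W_1$-contraction for the SDE \eqref{E1}; and a quantitative one-step local error bound of order $\delta^{1+\theta}$ between the two kernels. I would first verify that under $({\bf A}_1)$, $({\bf A}_2)$ and \eqref{ED} the tamed drift $b^{(\delta)}$ of \eqref{E7} satisfies $({\bf H}_1')$ and $({\bf H}_2')$ with the parameters prescribed just after \eqref{EE16}. The Lipschitz piece $({\bf H}_1')$ is a direct computation from the quotient rule together with \eqref{EE9} and \eqref{EE10}. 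For the dissipativity $({\bf H}_2')$ the main tool is the algebraic identity
\[
\<x-y,\,b^{(\delta)}(x)-b^{(\delta)}(y)\> = \frac{\<x-y,b(x)-b(y)\>+\delta^\theta\<x-y,b(x)\varphi(|y|)-b(y)\varphi(|x|)\>}{(1+\delta^\theta\varphi(|x|))(1+\delta^\theta\varphi(|y|))},
\]
which, combined with \eqref{EE11}, \eqref{EE12} and the constraint $\delta^\theta L_4 \le L_3/2$ built into $\delta_0$, yields dissipativity with constant $K_R^{*}=(L_3/2)\wedge L_5$. Hence Theorem \ref{W1} produces a unique invariant measure $\pi^\delta \in \mathscr P_1(\R^d)$ and constants $C_3^{*},\lambda_3>0$ uniform in $\delta\in(0,\delta_4]$. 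Independently, \eqref{EE9} together with the dissipativity \eqref{EE11} outside a ball allows a classical reflection-coupling argument to yield a unique $\pi\in\mathscr P_1(\R^d)$ and constants $C_0,\lambda_0>0$ such that $\mathbb W_1(\mathscr L_{X_t^x},\mathscr L_{X_t^y})\le C_0\e^{-\lambda_0 t}|x-y|$ for all $t\ge 0$ and all $x,y\in\R^d$.

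The second ingredient is uniform moment bounds and the one-step local error. Using \eqref{EE11} and $\delta^{1-\theta}(1+L_1^2)\le L_3/2$ from \eqref{EE16}, a one-step expansion of $\E|X_{(n+1)\delta}^{\delta,x}|^{2p}$ delivers $\sup_{n\ge 0,\,\delta\in(0,\delta_4]}\E|X_{n\delta}^{\delta,x}|^p\le C_p(1+|x|^p)$ for every $p\ge 1$; the continuous analogue $\sup_{t\ge 0}\E|X_t^x|^p \le C_p(1+|x|^p)$ comes from It\^o's formula on $|X_t|^{2p}$ with \eqref{EE11}. Integration shows $\pi$ and $\pi^\delta$ have all moments finite, uniformly in $\delta$. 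For the one-step comparison, a synchronous coupling of $X^y$ and $X^{\delta,y}$ through the same Brownian motion on $[0,\delta]$ gives
\[
\E|X_\delta^y-X_\delta^{\delta,y}|\le \int_0^\delta\E|b(X_s^y)-b(y)|\,\d s+\delta|b(y)-b^{(\delta)}(y)|.
\]
The first integral is of order $\delta^{3/2}(1+|y|^m)$ by \eqref{EE9}, \eqref{ED} and standard small-time estimates on $|X_s^y-y|$, whereas the second equals $\delta^{1+\theta}|b(y)|\varphi(|y|)/(1+\delta^\theta\varphi(|y|))$ and is bounded by $K\delta^{1+\theta}(1+|y|^m)$ via \eqref{EE9} and \eqref{ED} (enlarging $m$ if needed). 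Denoting by $Q^\delta$ the one-step Markov kernel of \eqref{ER1}, this yields, for some $K_{*}>0$ and $m\ge 1$,
\[
\mathbb W_1(\delta_y P_\delta,\,\delta_y Q^\delta)\le K_{*}\delta^{1+\theta}(1+|y|^m),
\]
where $(P_t)_{t\ge 0}$ is the semigroup of \eqref{E1}.

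Finally, I would apply the triangle inequality along the interpolating sequence $\bigl(\pi(Q^\delta)^k P_{(N-k)\delta}\bigr)_{k=0}^{N}$ joining $\pi P_{N\delta}$ to $\pi(Q^\delta)^N$, use the continuous contraction $\mathbb W_1(\mu P_{(N-k-1)\delta},\nu P_{(N-k-1)\delta})\le C_0\e^{-\lambda_0(N-k-1)\delta}\mathbb W_1(\mu,\nu)$ on each increment, and insert the one-step bound integrated against the measure $\pi(Q^\delta)^k$, whose $m$-th moment is bounded uniformly in $(k,\delta)$ by the moment estimate applied to the initial law $\pi$. This gives
\[
\mathbb W_1\bigl(\pi P_{N\delta},\,\pi(Q^\delta)^N\bigr)\le K_{*}\delta^{1+\theta}\sum_{k=0}^{N-1}C_0\e^{-\lambda_0(N-k-1)\delta}\!\int(1+|y|^m)\,\pi(\d y)\le\frac{C'\delta^{1+\theta}}{1-\e^{-\lambda_0\delta}}\le C\delta^\theta,
\]
uniformly in $N$, using $1-\e^{-\lambda_0\delta}\ge \tfrac{1}{2}\lambda_0\delta$ for small $\delta$. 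Since $\pi P_{N\delta}=\pi$ by invariance and $\pi(Q^\delta)^N\to\pi^\delta$ in $\mathbb W_1$ as $N\to\infty$ by Theorem \ref{W1}, sending $N\to\infty$ yields \eqref{EE13}. The main obstacle is this last telescoping step: the per-step error $\delta^{1+\theta}$ summed over $N\sim 1/\delta$ steps must collapse to exactly $\delta^\theta$ (not worse), which hinges on (a) the continuous-time contraction rate $\lambda_0$ being independent of $\delta$, (b) $m$-th moment bounds uniform in $(\delta,k)$ for the laws $\pi(Q^\delta)^k$, and (c) the one-step error constant depending only polynomially on the initial state — all three being precisely what $({\bf A}_1)$, $({\bf A}_2)$ and \eqref{ED} deliver.
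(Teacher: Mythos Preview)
Your argument is correct, but it takes a genuinely different route from the paper. You use the classical telescoping (interpolating-sequence) method: a one-step local error $\mathbb W_1(\delta_y P_\delta,\delta_y Q^\delta)\le K_*\delta^{1+\theta}(1+|y|^m)$ obtained by synchronous coupling, propagated through the continuous-time contraction of $(P_t)_{t\ge0}$ and summed geometrically. This is essentially the approach of \cite{DE} that the paper cites in the remark after Theorem~\ref{IPM}. The paper proceeds differently: it writes $\mathbb W_1(\pi,\pi^\delta)\le\varphi_1+\varphi_2+\varphi_3$ with $\varphi_1,\varphi_3\to0$ as $n\to\infty$ by the two contractions, and bounds the middle term $\varphi_2(n,\delta)=\mathbb W_1(\mathscr L_{X_{n\delta}^{\bf0}},\mathscr L_{X_{n\delta}^{\delta,\bf0}})$ by constructing a \emph{continuous-time} reflection/synchronous coupling between the SDE and the interpolated scheme (Proposition~\ref{pro}), yielding
\[
\varphi_2(n,\delta)\le C_*\int_0^{n\delta}\e^{-\lambda_*(n\delta-s)}\E\big|b(\bar Y_s)-b^{(\delta)}(\bar Y_{\lfloor s/\delta\rfloor\delta})\big|\,\d s,
\]
and then uses uniform moment bounds for the scheme (Lemma~\ref{lem5.2}) to show the integrand is $O(\delta^\theta)$, hence $\varphi_2\le C\delta^\theta$ uniformly in $n$. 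Your method is more modular---it reuses off-the-shelf contraction results for each process separately and needs only a synchronous one-step coupling---whereas the paper's route packages the error as a single drift-discrepancy integral via a new perturbation-type coupling result (Proposition~\ref{pro}), which is a reusable tool for other modifications of $b$ and avoids the explicit telescoping sum. Both rely on the same three inputs you identify: uniform $\delta$-independent contraction rates, uniform-in-$(\delta,n)$ moment bounds, and a local error polynomial in the state.
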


\begin{remark}
	Results similar to Theorem \ref{IPM} have been recently proved in \cite{DE}. However, our framework covers more general tamed Euler schemes, our assumptions are weaker, and in our Theorem \ref{W1} we provide a proof of the contractivity result for the tamed Euler scheme (which in \cite{DE} is stipulated as an assumption); see \cite[Theorem 5 and Theorem 7]{DE} for more details. For a recent discussion on the problem of quantifying long-time approximation errors for a large class of stochastic processes, the reader may also consult \cite{SS}.
\end{remark}

The content of this paper is arranged as follows. In Section \ref{sec2}, we show that the classical Euler scheme, the tamed scheme, and the truncated Euler scheme can be reformulated  respectively  as three representatives of the modified Euler scheme \eqref{EW}. Moreover, as far as three schemes mentioned previously are concerned,
 the contractive property of the mapping $\pi^{\delta}$, and technical Assumptions  (${\bf H}_1$) and (${\bf H}_2$) are examined in Section \ref{sec2}.  By construction an appropriate refined basic coupling, we complete the proof of Theorem \ref{thm} in Section \ref{sec3}.   Based on an examination of the Lyapunov condition in the semigroup form concerning the modified Euler scheme, the proof of Theorem \ref{add} is handled  in Section \ref{section_add} via the refined basic coupling as well.
Through  an application of the coupling by reflection, the proof of Theorem \ref{W1}  is  completed in Section \ref{sec4}. The last section is devoted to the proof of Theorem \ref{IPM} on the basis of the asymptotic coupling by reflection.

\section{Verification of \eqref{E3}, (${\bf H}_1$) and (${\bf H}_2$)}\label{sec2}
This section is devoted to showing that the classical EM scheme, the tamed EM scheme, and the truncated scheme, as three typical representatives,
are special cases of the stochastic algorithm \eqref{EW}. In addition, we demonstrate that the contractive property \eqref{E3}, and
Assumptions (${\bf H}_1$) and (${\bf H}_2$) can be  fulfilled
by the three aforementioned schemes.

\subsection{The classical EM scheme}\label{subsec:classicalEM}
It is well-known that the classical  EM method works merely for SDEs with coefficients of linear growth. So,
in this subsection, we shall assume that the drift $b$ is globally Lipschitz but dissipative in the long distance. In detail,
  we shall suppose that
\begin{enumerate}
\item[(${\bf B}_1$)] there exists an $L>0$ such that for all $x,y\in\R^d,$
 $$|b(x)-b(y)|\le L|x-y|;$$

\item[(${\bf B}_2$)]there exist constants $R>0$ and $K_R>0$ such that for all $x,y\in\R^d$ with $x\in B_R^c$  or $y\in B_R^c$,
\begin{align*}
\<x-y,b(x)-b(y)\>\le -K_R|x-y|^2.
\end{align*}
\end{enumerate}

The classical EM scheme associated with   \eqref{E1} is given as follows: for $\delta>0 $ and integer $n\ge0,$
\begin{align}\label{ER}
X_{(n+1)\delta}^{\delta}= X_{n\delta}^\delta +b(X_{n\delta}^\delta)\delta+\delta^{1/2} \xi_{n+1},
\end{align}
where $(\xi_n)_{n\ge1}$ are $d$-dimensional standard Gaussian random variables.
Concerning the EM scheme above, we examine \eqref{E3}, (${\bf H}_1$) as well as  (${\bf H}_2$), separately.
Once we take $\pi^{(\delta)}(x)=x$, which satisfies \eqref{E3} trivially, and subsequently choose $b^\delta(x)=b(x)$,  \eqref{E2} goes back to  \eqref{ER}. Due to  $\pi^{(\delta)}(x)=x$ and $b^{(\delta)}(x)=b(x)$,  $({\bf H}_2)$ with $K_R^*=K_R$ follows   from  $({\bf B}_2)$  right away.
In addition, by virtue of (${\bf B}_1$) and (${\bf B}_2$), it is easy to see that for all $x,y\in\R^d,$
\begin{align*}
\<x-y,b(x)-b(y)\>\le \big(L \I_{\{x\in B_R\}\cap\{y\in B_R\}}+K_R\I_{\{x\in B_R^c\}\cup\{y\in B_R^c\}}\big)|x-y|.
\end{align*}
Whence, (${\bf H}_1$) is verifiable  for any $\delta\in(0,1].$

\subsection{The tamed EM scheme in Subsection \ref{section1.3} satisfying Assumptions {\bf(A1)} and {\bf(A2)}}\label{subsec:tamedEM}
Below, let $\delta_0=(L_3/(2L_4))^{\frac{1}{\theta}}\wedge1$ for $\theta\in(0,1/2)$ and set $\delta\in(0,\delta_0].$
As  for the tamed EM scheme \eqref{ER1}, we can take $\pi^{(\delta)}(x)=x$ so
evidently \eqref{E3} holds true.
In terms of the definition of $b^{(\delta)}$ introduced in \eqref{E7},
it is easy to see  from (${\bf A}_1$) that for   any   $\theta\in(0,1/2)$   and $x,y\in\R^d,$
\begin{align*}
\big|b^{(\delta)}(x)-b^{(\delta)}(y)\big|&=\frac{\big|b(x)-b(y)+\delta^{\theta}(b(x)\varphi(|y|)-b(y)\varphi(|x|))\big|}{1+\delta^\theta(\varphi(|x|)+\varphi(|y|))+\delta^{2\theta}\varphi(|x|)\varphi(|y|)}\\
&\le\frac{(L_1\vee L_2 ) \big(  1+ \varphi(|x|)+\varphi(|y|)   +\delta^{\theta}  \varphi(|x|)\varphi(|y|)\big) |x-y|}{1+\delta^\theta(\varphi(|x|)+\varphi(|y|))+\delta^{2\theta}\varphi(|x|) \varphi(|y|)}\I_{\{x,y\in B_R\}}\\
&\quad+\frac{(L_1\vee L_2 )\delta^{-\theta}\big(  \delta^{\theta}+\delta^{\theta}(\varphi(|x|)+\varphi(|y|))   +\delta^{2\theta}  \varphi(|x|)\varphi(|y|)\big) |x-y|}{1+\delta^\theta(\varphi(|x|)+\varphi(|y|))+\delta^{2\theta}\varphi(|x|) \varphi(|y|)}\I_{\{x\in B_R^c\}\cup\{y\in B_R^c\}}.
\end{align*}
Therefore, (${\bf H}_1$) holds true for $\pi^{(\delta)}(x)=x $,  $C_R= (L_1\vee L_2 )(1+\varphi(R))^2$, and  $K_R=L_1\vee L_2.$

Again, by invoking  the definition of $b^{(\delta)}$, we derive from (${\bf A}_2$) that  for all  $x,y\in\R^d$  with $x\in B_R^c$  or $y\in B_R^c$,
\begin{align*}
\<x&-y,b^{(\delta)}(x)-b^{(\delta)}(y)\>\\
&=\frac{\<x-y,b(x)-b(y)+\delta^{\theta}(b(x)\varphi(|y|)-b(y)\varphi(|x|))\>}{1+\delta^\theta(\varphi(|x|)+\varphi(|y|))+\delta^{2\theta}\varphi(|x|) \varphi(|y|)}\\
&\le -\frac{\big( (L_3-L_4\delta^\theta)(1+\varphi(|x|)+\varphi(|y|)) +L_5\delta^{\theta}  \varphi(|x|)\varphi(|y|) \big) |x-y|^2 }{1+\delta^\theta(\varphi(|x|)+\varphi(|y|))+\delta^{2\theta}\varphi(|x|) \varphi(|y|)}\\
&\le -\big(  (L_3/2)\wedge L_5\big)\delta^{-\theta} \times\frac{ \big (\delta^\theta+\delta^\theta(\varphi(|x|)+\varphi(|y|))  + \delta^{2\theta}  \varphi(|x|)\varphi(|y|)\big)  |x-y|^2 }{1+\delta^\theta(\varphi(|x|)+\varphi(|y|))+\delta^{2\theta}\varphi(|x|) \varphi(|y|)},
\end{align*}
where in the second inequality we used the fact that $L_4\delta^\theta\le L_3/2$ for any  $\delta\in(0,\delta_0]$.
Note  that, for any  $\delta\in(0,\delta_0]$
and $r>0$, $(\delta^\theta+r)/(1+r)\ge (\delta^\theta+\delta^\theta r)/(1+r)=\delta^\theta$.  Whereafter, we infer that   for all   $x,y\in\R^d$ with $x\in B_R^c$  or $y\in B_R^c$,
\begin{align*}
\<x-y,b^{(\delta)}(x)-b^{(\delta)}(y)\>\le  -\big(  (L_3/2)\wedge L_5\big)|x-y|^2.
\end{align*}
Consequently, (${\bf H}_2$) for $K_R^*=(L_3/2)\wedge L_5$  is verifiable.

Below,    an illustrative example  is set to show that both $({\bf A}_1)$ and $({\bf A}_2)$ can be
validated.

\begin{example}\label{exa} Let $U:\R^d\to\R$ be the double well potential, which is defined by $U(x)=\frac{1}{4}|x|^4-\frac{1}{2}|x|^2$,   and set $b(x):=-\nn U(x)$ for $x\in\R^d.$ It is easy to see that
$b(x)=-|x|^2x+x.$ Via the triangle inequality, there exists a   constant $c_1>0 $ such that
\begin{equation}\label{ER5}
\begin{split}
|b(x)-b(y)|&\le |x-y|+\big||x|^2x-|y|^2y\big|\\
&\le|x-y|+|x|^2|x-y|+\big||x|^2-|y|^2\big|\cdot|y|\\
&\le c_1\big(1+|x|^2+|y|^2)|x-y|,\quad x,y\in\R^d,
\end{split}
\end{equation}
and moreover there is a constant   $c_2>0 $ such that
\begin{align*}
\big|b(x)|y|^2-b(y)|x|^2\big|&=\big|(-|x|^2x+x)|y|^2-(-|y|^2y+y)|x|^2\big|\\
&\le |x|^2|y|^2 |x -y  |+\big|x|y|^2-y|x|^2\big|\\
&\le  \big( |x|^2|y|^2 +|x|(|y| +|x| )+ |x|^2 \big) |x-y|\\
&\le c_2\big(1+|x|^2+|y|^2+|x|^2|y|^2\big)|x-y|,\quad x,y\in\R^d.
\end{align*}
Hence, we conclude that $({\bf A}_1)$ with
$\varphi(r)=r^2$ is satisfied.

Next, by invoking Newton-Leibniz's  formula, it follows that for any $x,y\in\R^d,$
\begin{align}
\<x-y,b(x)-b(y)\>&=|x-y|^2-\<x-y, |x|^2x -|y|^2y \>\nonumber\\
&=|x-y|^2-\int_0^1\frac{\d}{\d s}\<x-y,  |y+s(x-y)|^2(y+s(x-y))  \>\,\d s\nonumber\\
&=|x-y|^2-2\int_0^1  \<x-y,    y +s(x-y)\>^2 \,\d s\nonumber\\
&\quad-\int_0^1|y+s(x-y)|^2\,\d s|x-y|^2\label{EE6-}\\
&\le |x-y|^2-\big(|y|^2+\<y,x-y\>+ |x-y|^2/3\big)|x-y|^2\nonumber\\
&=\Big(1-\frac{1}{3}\big(|x|^2+|y|^2-\<x,y\>\big)\Big)|x-y|^2\nonumber\\
&\le\Big(1-\frac{1}{6}\big(|x|^2+|y|^2\big)\Big)|x-y|^2\nonumber ,
\end{align}
where in the first inequality we dropped the first integral (which is non-negative) in the third identity and in the last inequality we used the fact that
 $$|x|^2+|y|^2-\<x,y\>\ge \frac{1}{2}\left(|x|^2+|y|^2\right);$$ moreover, for  $x,y\in\R^d,$
\begin{align*}
\<x-y, b(x)|y|^2-b(y)|x|^2\>
&=\<x-y,x|y|^2-y|x|^2\>-|x-y|^2|x|^2|y|^2\\
&=|x-y|^2|y|^2+\<x-y,y\>(|y|^2-|x|^2)-|x-y|^2|x|^2|y|^2\\
&\le (|x||y|+2|y|^2)|x-y|^2-|x|^2|y|^2|x-y|^2\\
&\le \left(\frac{1}{2}|x|^2+\frac{5}{2}|y|^2\right)|x-y|^2-|x|^2|y|^2|x-y|^2.
\end{align*}
As a consequence, (${\bf A}_2$) is also  reachable  with $\varphi(r)=r^2$.

We note that, by following the preceding procedure, both (${\bf A}_1$) and (${\bf A}_2$) are still valid for $b(x)=-\nn V(x)$ with $V(x)=|x|^{2\beta}$ for $\beta>1,$ which has been investigated in \cite{LMW}.
\end{example}

\subsection{The truncated EM scheme}\label{sec2.3} In this subsection,
we intend to show that the truncated EM algorithm, which was initiated in   \cite{Mao}, can also be incorporated in \eqref{E2}.  Likewise, we assume that
$b$ is   locally Lipschitz continuous and partially dissipative.  More precisely, we suppose  that
\begin{enumerate}
\item[(${\bf C}_1$)]  for any $r>0,$ there exists   a strictly increasing and continuous function $\varphi:\R_+\to\R_+$ such that for all $x,y\in B_r$,
\begin{align*}
|b(x)-b(y)|\le  \varphi(r)|x-y|;
\end{align*}

\item[(${\bf C}_2$)]there exist constants $R,K_R>0$ such that for all $x,y\in\R^d$ with $x\in B_R^c$  or $y\in B_R^c$,
\begin{align*}
\<x-y,b(x)-b(y)\>\le  -K_R |x-y|^2.
\end{align*}
\end{enumerate}

We still take $b$ given in Example \ref{exa} as a typical candidate.  From \eqref{ER5}, we conclude that (${\bf C}_1$) holds true with  $\varphi(r)=c_1(1+2r^2),$ which obviously is strictly increasing and  continuous on the interval $[0,\8)$. Moreover, with the aid of \eqref{EE6-}, it is easy to see that
(${\bf C}_2$) is valid for some constant $R>0.$

In spirit to $b^{(\delta)}$ defined in \eqref{E7}, it consists essentially in modifying   the original drift $b$ so the   amended version
can be dictated.  To this end,  we define the following truncation function:  for any $\theta\in(0,1/2)$ and $\delta>0$,
 \begin{align}\label{ER3}
\pi^{(\delta)}(x)=  \frac{1}{|x|}\big(|x|\wedge \varphi^{-1}(\delta^{-\theta})\big)x\I_{\{|x|>0\}},\quad x\in\R^d,
\end{align}
where $\varphi^{-1}$ means  the inverse of $\varphi$. With the truncation function $\pi^{(\delta)}$ above at hand,
the truncated EM scheme associated with  \eqref{E1} can be presented in the form below: for any $\delta>0$ and integer $n\ge0,$
\begin{equation}\label{ER2}
X_{(n+1)\delta}^{\delta}=\pi^{(\delta)}(X_{n\delta}^\delta)+b(\pi^{(\delta)}(X_{n\delta}^\delta))\delta+\delta^{1/2} \xi_{n+1},
\end{equation}
in which  $(\xi_n)_{n\ge1}$ are $d$-dimensional standard Gaussian random variables.
As a result, by choosing $b^{(\delta)}=b$ in \eqref{E2},
 we reproduce the variant \eqref{ER2} of the truncated EM scheme explored initially in \cite{Mao}. In literature, the truncated EM scheme is also termed as  the projected EM method; see, for instance, \cite{BIK}.

To begin, we show that the truncation mapping defined in \eqref{ER3} is contractive. Indeed,
by the Cauchy-Schwarz inequality,
note that for any $\delta>0$ and $x,y\in\R^d$,
\begin{equation}\label{EW6}
\begin{split}
 |x-y|^2-|\pi^{(\delta)}(x)-\pi^{(\delta)}(y) |^2
&\ge |x|^2-\big(|x|\wedge \varphi^{-1}(\delta^{-\theta})\big)^2 +|y|^2-\big(|y|\wedge \varphi^{-1}(\delta^{-\theta})\big)^2\\
&\quad-2\big(|x|\cdot|y|-(|x|\wedge \varphi^{-1}(\delta^{-\theta}))(|y|\wedge \varphi^{-1}(\delta^{-\theta})\big)\\
&=:\Lambda(x,y,\delta).
\end{split}
\end{equation}
On the one hand,  the contractive property \eqref{E3} is fulfilled due to  $\Lambda(x,y,\delta)=0$ for any $x,y\in\R^d$ with $|x|,|y|\le \varphi^{-1}(\delta^{-\theta})$. On the other hand, a direct calculation shows that for $x,y\in\R^d$ with
$|y|\ge \varphi^{-1}(\delta^{-\theta})$,
\begin{align*}
\Lambda(x,y,\delta)=\big(|y|-(|x|\vee \varphi^{-1}(\delta^{-\theta}))\big(|y| +|x|\vee \varphi^{-1}(\delta^{-\theta})-2|x|\big).
\end{align*}
In particular,
\begin{equation*}
\Lambda(x,y,\delta)=
\begin{cases}
\big( |y|-   \varphi^{-1}(\delta^{-\theta}) \big)\big(|y| +\varphi^{-1}(\delta^{-\theta})-2|x|\big)\ge0,\quad |x| \le \varphi^{-1}(\delta^{-\theta}), |y| \ge \varphi^{-1}(\delta^{-\theta}), \\
(|x|-|y|)^2\ge0,~~~~~~~~~~~~~~~~~~~~~~~~~~~~~~~~~~~~~~~~~|x|,|y| \ge \varphi^{-1}(\delta^{-\theta}).
\end{cases}
\end{equation*}
Then, by taking advantage of the symmetry concerned with the variables $x$ and $y$, we conclude that \eqref{E3} is also valid for any $x,y\in\R^d$
with
$|x| \ge \varphi^{-1}(\delta^{-\theta})$.
In brief, the   contractive property \eqref{E3} is provable  for $\pi^{\delta}$ defined in
\eqref{ER3}.

Notice from $({\bf C_1})$ and the contractive property of $\pi^{{\delta}}$ that for any $\delta>0$, and $x,y\in\R^d,$
\begin{align*}
\big|b(\pi^{(\delta)}(x))-b(\pi^{(\delta)}(y))\big|&=\big|b(\pi^{(\delta)}(x))-b(\pi^{(\delta)}(y))\big|\big(\I_{\{x,y\in B_R\}}+\I_{\{x\in B_R^c\}\cup\{y\in B_R^c\}}\big)\\
&\le  \big(\varphi(R)\I_{\{x,y\in B_R\}}+ \delta^{-\theta}\I_{\{x\in B_R^c\} \cup\{y\in B_R^c\}}\big)|x-y|.
\end{align*}
Hence, we infer that Assumption $({\bf H}_1)$ holds true with $ C_R=\varphi(R)$,
$K_R=1,$ and any $\delta>0.$

Below, we set $\delta_0:=\varphi(R)^{-\frac{1}{\theta}}\wedge1$. By invoking  the strictly increasing property of $\varphi(\cdot)$,
it is ready to see  that for any $x\in\R^d$ with $x\in B^c_R$ and $\delta\in(0,\delta_0]$,
\begin{align*}
|\pi^{(\delta)}(x)|=|x|\wedge \varphi^{-1}(\delta^{-\theta})\ge|x|\wedge \varphi^{-1}(\delta^{-\theta}_0) \ge R.
\end{align*}
Based on this, we thus deduce from $({\bf C}_2)$ that for any $x,y\in\R^d$ with $x\in B_R^c$ or $y\in B_R^c,$
\begin{align*}
\big\< \pi^{(\delta)}(x)-\pi^{(\delta)}(y), b(\pi^{(\delta)}(x))-b(\pi^{(\delta)}(y))\big\>\le-K_R \big|\pi^{(\delta)}(x)-\pi^{(\delta)}(y)\big|^2.
\end{align*}
This obviously  yields that $({\bf H}_2)$ is valid.

\section{Proof of Theorem \ref{thm}}\label{sec3}
In this section, we aim to implement the proof of Theorem \ref{thm}. Before that, a series of preliminary work need to be done. In the first place, we introduce some notations.
Since, for each fixed $x\in\R^d$, $\nu_x(\d z)$ is absolutely continuous with respect to $\nu(\d z)$, the  Radon–Nikodym derivative, written as $\rho(x,z)$, exists so
 \begin{align}\label{E6}
 \rho(x,z):=\frac{\nu_x(\d z)}{\nu(\d z)} =\frac{(2\pi)^{-{d}/{2}}\big(\e^{-{|z|^2}/{2}}\wedge \e^{-{|z-x|^2}/{2}}\big)}{(2\pi)^{-{d}/{2}}\e^{-{|z|^2}/{2}}}\in(0,1],\quad x,z\in\R^d.
 \end{align}
Furthermore,   we set  for a threshold $\kk>0 $,
 $$(x)_\kk:=\Big(1\wedge \frac{\kk}{|x|}\Big)x\I_{\{|x|\neq 0\}},\quad x\in\R^d.$$
Let $(U_n)_{n\ge1}$ be a sequence of i.i.d.  random variables,  carried on the probability space $(\OO,\mathscr F, \P)$,
distributed as  uniform distributions on  $[0,1] $, and  independent of $(\xi_n)_{n\ge1}. $

With the preceding notations,
 we define the following  iteration: for $n\ge0$,
\begin{equation}\label{E4}
\begin{cases}
X_{(n+1)\delta}^{\delta}=\hat {X}_{n\delta}^{\delta}+\delta^{1/2}\xi_{n+1}\\
 Y_{(n+1)\delta}^{\delta}=\hat {Y}_{n\delta}^{\delta}+\delta^{1/2}\Big\{\big(\xi_{n+1}+\delta^{-{1}/{2}}(\hat {Z}_{n\delta}^{\delta})_\kk\big)\I_{\{U_{n+1}\le \frac{1}{2}\rho(-\delta^{-{1}/{2}} (\hat {Z}_{n\delta}^{\delta})_\kk,\xi_{n+1})\}}\\
 \qquad\qquad\qquad\qquad\quad+\big(\xi_{n+1}-\delta^{-{1}/{2}} (\hat {Z}_{n\delta}^{\delta})_\kk\big)\\
  \qquad\qquad\qquad\qquad\quad\times\I_{\{ \frac{1}{2}\rho(-\delta^{-{1}/{2}}(\hat {Z}_{n\delta}^{\delta})_\kk,\xi_{n+1})\le U_{n+1}\le \frac{1}{2}(\rho(-\delta^{-{1}/{2}} (\hat {Z}_{n\delta}^{\delta})_\kk,\xi_{n+1})+\rho(\delta^{-{1}/{2}} (\hat {Z}_{n\delta}^{\delta})_\kk,\xi_{n+1}))\}}\\
 \qquad\qquad\qquad\qquad\quad+\xi_{n+1}\I_{\{  \frac{1}{2}(\rho(-\delta^{-{1}/{2}} (\hat {Z}_{n\delta}^{\delta})_\kk,\xi_{n+1})+\rho(\delta^{-{1}/{2}} (\hat {Z}_{n\delta}^{\delta})_\kk,\xi_{n+1}))\le U_{n+1}\le 1\}} \Big\},
\end{cases}
\end{equation}
 where $\hat {Z}_{n\delta}^{\delta}:=\hat {X}_{n\delta}^{\delta}-\hat {Y}_{n\delta}^{\delta}$ with
\begin{align}\label{EE8}
\hat {X}_{n\delta}^{\delta}:=\pi^{(\delta)}(X_{n\delta}^\delta)+b^{(\delta)}(\pi^{(\delta)}(X_{n\delta}^\delta))\delta~\mbox{ and } ~\hat {Y}_{n\delta}^{\delta}:=\pi^{(\delta)}(Y_{n\delta}^\delta)+b^{(\delta)}(\pi^{(\delta)}(Y_{n\delta}^\delta))\delta.
\end{align}

Below, we elaborate the underlying intuition concerning the construction of the  coupling process $(X_{n\delta}^\delta, Y_{n\delta}^\delta)_{n\ge0}$,
which will be examined in Lemma \ref{coupling}.

\begin{remark}
To describe clearly the intuitive idea lay in the construction of the  coupling process,    we consider the one-step version of \eqref{E4}, i.e., for all $x,y\in\R^d,$
\begin{equation}\label{E4*}
\begin{cases}
 X_\delta^{\delta}=\hat x^{\delta}+\delta^{1/2}\xi_1\\
 Y_\delta^{\delta}=\hat y^{\delta}+\delta^{1/2}\Big\{\big(\xi_1+\delta^{-{1}/{2}}(\hat z^{\delta})_\kk\big)\I_{\{U_1\le \frac{1}{2}\rho(-\delta^{-{1}/{2}} (\hat z^{\delta})_\kk,\xi_1)\}}+\big(\xi_1-\delta^{-{1}/{2}} (\hat z^{\delta})_\kk\big)\\
 \qquad\qquad\qquad\quad\times\I_{\{ \frac{1}{2}\rho(-\delta^{-{1}/{2}}(\hat z^{\delta})_\kk,\xi_1)\le U_1\le \frac{1}{2}(\rho(-\delta^{-{1}/{2}} (\hat z^{\delta})_\kk,\xi_1)+\rho(\delta^{-{1}/{2}} (\hat z^{\delta})_\kk,\xi_1))\}}\\
 \qquad\qquad\qquad\quad+\xi_1\I_{\{  \frac{1}{2}(\rho(-\delta^{-{1}/{2}} (\hat z^{\delta})_\kk,\xi_1)+\rho(\delta^{-{1}/{2}} (\hat z^\delta)_\kk,\xi_1))\le U_1\le 1\}} \Big\},
\end{cases}
\end{equation}
where  $\hat z^{\delta}:=\hat x^{\delta}-\hat y^{\delta}$ with $\hat x^\delta :=\pi^{(\delta)}(x)+b^{(\delta)}(\pi^{(\delta)}(x))\delta$ for $x\in\R^d$.
The coupling given in \eqref{E4*} is inspired essentially  by the refined basic coupling proposed in \cite{LW} for  SDEs driven by  additive L\'{e}vy noises.
More precisely, for the given initial value $(x,y)$, the random variable $\xi_1$ is drawn to describe the fluctuation of $X_\delta^{\delta}$; In case of $|\hat z^\delta|\le \kk,$ with half of the maximum probability (i.e., $\frac{1}{2}\nu_{-\delta^{-{1}/{2}}\hat z^\delta}(\d z)$), $X_\delta^{\delta}$ and $Y_\delta^{\delta}$
 meet together; with the other half  (i.e., $\frac{1}{2}\nu_{\delta^{-{1}/{2}}\hat z^\delta}(\d z)$), the distance between $X_\delta^{\delta}$ and $Y_\delta^{\delta}$ is doubled with contrast to the original distance, which also plays a crucial role in guaranteeing the marginal property; with the remaining mass (i.e., $ \nu(\d z)- \frac{1}{2}\nu_{-\delta^{-{1}/{2}}\hat z^\delta}(\d z)-\frac{1}{2}\nu_{\delta^{-{1}/{2}}\hat z^\delta}(\d z)$), $X_\delta^{\delta}$ and $Y_\delta^{\delta}$ move forward synchronously. Additionally, we would like to emphasize that
 the  involvement of the threshold $\kk$ is indispensable. In particular, it can provide some uniform lower bound of the positive coupling probability; see the estimate \eqref{EE} below for more details.
\end{remark}

To begin with, we  show that $(X_{n\delta}^\delta, Y_{n\delta}^\delta)_{n\ge0}$
determined by \eqref{E4} is a coupling process of $(X_{n\delta}^\delta)_{n\ge0}$,
i.e., both $(Y_{n\delta}^\delta)_{n\ge0}$ and $(X_{n\delta}^\delta)_{n\ge0}$ have the same transition probabilities if they are  initialized from the same starting positions.

\begin{lemma}\label{coupling}
For  $\delta,\kk>0$,
$(X_{n\delta}^{\delta},Y_{n\delta}^{\delta})_{n\ge0}$ is a coupling process of $(X_{n\delta}^{\delta})_{n\ge0}$.
\end{lemma}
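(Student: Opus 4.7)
The $X$-component is, by construction, an exact copy of the scheme \eqref{EW}: combining the first line of \eqref{E4} with \eqref{EE8} yields $X_{(n+1)\delta}^\delta = \pi^{(\delta)}(X_{n\delta}^\delta) + b^{(\delta)}(\pi^{(\delta)}(X_{n\delta}^\delta))\delta + \delta^{1/2}\xi_{n+1}$. Hence the only nontrivial content of the lemma is that the $Y$-component also obeys \eqref{EW} in law, i.e.\ that, conditional on the natural filtration $\mathscr F_{n\delta}:=\sigma(X_0^\delta, Y_0^\delta,\xi_1,\dots,\xi_n,U_1,\dots,U_n)$, the increment $\delta^{-1/2}(Y_{(n+1)\delta}^\delta-\hat Y_{n\delta}^\delta)$ has law $\nu$.

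The plan is as follows. Set $v:=\delta^{-1/2}(\hat Z_{n\delta}^\delta)_\kk$, which is $\mathscr F_{n\delta}$-measurable, and note that $(\xi_{n+1},U_{n+1})$ is independent of $\mathscr F_{n\delta}$. Since $0\le \rho(\pm v,\xi)\le 1$ by \eqref{E6}, the weights $\tfrac{1}{2}\rho(\pm v,\xi)$ lie in $[0,1/2]$ and their sum is at most $1$, so the three-way case split in \eqref{E4} is a genuine probability disintegration. For every bounded measurable $f:\R^d\to\R$, integrating out $U_{n+1}$ first would give
\begin{align*}
\E\big[f\big(\delta^{-1/2}(Y_{(n+1)\delta}^\delta-\hat Y_{n\delta}^\delta)\big)\,\big|\,\mathscr F_{n\delta}\big]
&= \int_{\R^d}\Big\{\tfrac{1}{2}\rho(-v,\xi)f(\xi+v)+\tfrac{1}{2}\rho(v,\xi)f(\xi-v)\\
&\qquad +\big(1-\tfrac{1}{2}\rho(-v,\xi)-\tfrac{1}{2}\rho(v,\xi)\big)f(\xi)\Big\}\,\nu(\d\xi).
\end{align*}

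The key identity is the change of measure $\rho(w,\xi)\nu(\d\xi)=\nu_w(\d\xi)$ for every $w\in\R^d$, which is exactly \eqref{E6}. Since $\nu_w$ has the symmetric density $(2\pi)^{-d/2}(\e^{-|\xi|^2/2}\wedge \e^{-|\xi-w|^2/2})$, the translations $\xi\mapsto\xi\mp v$ give
\begin{align*}
\int_{\R^d} f(\xi+v)\,\nu_{-v}(\d\xi) = \int_{\R^d} f(\eta)\,\nu_v(\d\eta),\qquad \int_{\R^d} f(\xi-v)\,\nu_v(\d\xi) = \int_{\R^d} f(\eta)\,\nu_{-v}(\d\eta).
\end{align*}
Substituting these back into the previous display, the $\pm v$ contributions coming from the first two weights cancel exactly against the corresponding subtractions in the third weight, leaving $\int f\,\d\nu$. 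Hence $\delta^{-1/2}(Y_{(n+1)\delta}^\delta-\hat Y_{n\delta}^\delta)\sim\nu$ conditional on $\mathscr F_{n\delta}$, and combined with the $\mathscr F_{n\delta}$-measurability of $\hat Y_{n\delta}^\delta$ this shows that the $Y$-marginal has the same transition kernel as \eqref{EW}.

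There is no single hard step here: the proof amounts to bookkeeping around the defining identity $\rho(w,\cdot)\nu=\nu_w$ together with the elementary translation symmetry of the density of $\nu_w$, and the only thing worth tracking carefully is the sign accounting in the change of variables. Verifying that the three weights form a valid probability vector and that $v$, $\hat Y_{n\delta}^\delta$ are $\mathscr F_{n\delta}$-measurable is routine.
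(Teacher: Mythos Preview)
Your proposal is correct and follows essentially the same route as the paper: both arguments integrate out the uniform variable first to express the conditional law of the $Y$-increment as the three-term mixture $\tfrac12\nu_{-v}*\delta_v+\tfrac12\nu_v*\delta_{-v}+(\nu-\tfrac12\nu_{-v}-\tfrac12\nu_v)$, and then apply the translation identity $\nu_{-v}(\d(\eta-v))=\nu_v(\d\eta)$ (which the paper phrases as $\nu_{-x}(\d(u-x))=\nu_x(\d u)$) to cancel the shifted pieces against the subtracted ones. The only cosmetic difference is that you condition on the full filtration $\mathscr F_{n\delta}$ while the paper conditions on $(X_{(n-1)\delta}^\delta,Y_{(n-1)\delta}^\delta)$ and reduces explicitly to the one-step computation \eqref{E5}; by the Markov structure these are equivalent.
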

\begin{proof}
Let  $\bar\P$   be the law of $(X_{n\delta}^\delta,Y_{n\delta}^\delta)_{n\ge0}$   and $\bar \E$    be the corresponding expectation  operator under $\bar\P$.
To show that $(X_{n\delta}^{\delta},Y_{n\delta}^{\delta})_{n\ge0}$ is a coupling  process of $(X_{n\delta}^{\delta})_{n\ge0}$, it is sufficient to verify that for all  $f\in B_b(\R^d)$ and any integer $n\ge1$,
\begin{align}\label{ET}
\bar\E\big( f(Y_{n\delta}^\delta)\big|\big(X_{(n-1)\delta}^\delta,Y_{(n-1)\delta}^\delta\big)\big)=\E\big( f(X_{n\delta}^{\delta})\big|X_{n-1}^\delta\big).
\end{align}
Recall that $(\xi_n)_{n\ge1}$ (resp. $(U_n)_{n\ge1}$) are i.i.d random variables
and $(\xi_n)_{n\ge1}$ is independent of $(U_n)_{n\ge1}$. Hence,
in order to achieve \eqref{ET}, via an inductive argument,    it is essential to verify
\begin{align} \label{E5}
\bar\E^{(x,y)}f(Y_\delta^\delta):=\bar\E \big( f(Y_\delta^\delta)\big|(X_0^\delta,Y_0^\delta)=(x,y)\big)  =\E \big( f(X_\delta^{\delta})\big|X_0^{\delta}=y\big).
\end{align}
Indeed, since $\xi_1$ with the law $\nu(\d z)$ is  independent of the uniformly distributed random variable $U_1$   on $[0,1],$ we derive that
\begin{align*}
\bar\E^{(x,y)}  f(Y_\delta^\delta)&=\frac{1}{2}\int_{\R^d}f\big(\hat y^{\delta}+\delta^{1/2}\big(u+\delta^{-{1}/{2}}(\hat z^{\delta})_\kk\big)\big) \,\nu_{-\delta^{-{1}/{2}}(\hat z^{\delta})_\kk}(\d u)\\
&\quad+\frac{1}{2}\int_{\R^d}f\big(\hat y^{\delta}+\delta^{1/2}\big(u-\delta^{-{1}/{2}}( \hat z^{\delta})_\kk\big)\big) \,\nu_{\delta^{-{1}/{2}}(\hat z^{\delta})_\kk}(\d u)\\
&\quad+\int_{\R^d}f\big(\hat y^{\delta}+\delta^{1/2} u\big)\Big(\nu(\d u)-\frac{1}{2} \nu_{-\delta^{-{1}/{2}} (\hat z^{\delta})_\kk}(\d u) -\frac{1}{2} \nu_{\delta^{-{1}/{2}}(\hat z^{\delta})_\kk} \nu(\d u)\Big).
\end{align*}
For the first integral and the second integral above, via change of variables $u\rightarrow u-\delta^{-{1}/{2}}(\hat z^{\delta})_\kk$ and $u\rightarrow u+\delta^{-{1}/{2}}(\hat z^{\delta})_\kk$    respectively, in addition to $\nu_{-x}(\d (u-x))=\nu_x(\d u)$ for $x\in\R^d$, we find that
\begin{align*}
\bar\E^{(x,y)}  f(Y_\delta^\delta)&=\frac{1}{2}\int_{\R^d}f\big(\hat y^{\delta}+\delta^{1/2} u\big)\,\nu_{\delta^{-{1}/{2}}(\hat z^{\delta})_\kk}(\d u)\\
&\quad+\frac{1}{2}\int_{\R^d}f\big(\hat y^{\delta}+\delta^{1/2} u\big)\,\nu_{-\delta^{-{1}/{2}}(\hat z^{\delta})_\kk}(\d u)\\
&\quad+\int_{\R^d}f\big(\hat y^{\delta}+\delta^{1/2} u\big)\Big(\nu(\d u
)-\frac{1}{2} \nu_{-\delta^{-{1}/{2}}(\hat z^{\delta})_\kk}(\d u)-\frac{1}{2} \nu_{\delta^{-{1}/{2}}(\hat z^{\delta})_\kk} \nu(\d u)\Big)\\
&=\int_{\R^d}f\big(\hat y^{\delta}+\delta^{1/2} u\big)\nu(\d u
).
\end{align*}
Therefore, \eqref{E5} follows directly.
\end{proof}

\begin{lemma}\label{lem1}
Assume \eqref{E3},
$({\bf H}_1)$ and $({\bf H}_2)$ hold. Then, for any $\delta\in(0,\delta_1]$ with $\delta_1$ being given in \eqref{EE6*}  and integer $n\ge1,$
\begin{equation}\label{E9}
\begin{split}
&\bar\E\big( |X_{n\delta}^\delta-Y_{n\delta}^\delta| \big|(X_{(n-1)\delta}^\delta,Y_{(n-1)\delta}^\delta)\big)-|X_{(n-1)\delta}^\delta-Y_{(n-1)\delta}^\delta|\\
&\le  \Big( C_R\delta  \I_{\{|X_{(n-1)\delta}^\delta-Y_{(n-1)\delta}^\delta|\le 2R\}}- \frac{1}{2}K_R^*\delta \I_{\{|X_{(n-1)\delta}^\delta-Y_{(n-1)\delta}^\delta|>2R\}}\Big)|X_{(n-1)\delta}^\delta-Y_{(n-1)\delta}^\delta|.
\end{split}
\end{equation}
Moreover, for any $\delta\in(0,\delta_1]$, $\kk>0$, and integer $n\ge1,$
\begin{align}\label{EE}
\bar\E\big(\I_{\{ X_{n\delta}^{\delta }=Y_{n\delta}^{\delta}\}}\big|(X_{(n-1)\delta}^\delta,Y_{(n-1)\delta}^\delta)\big)\ge \frac{1}{2}J\big(\kk\delta^{-{1}/{2}}\big)
\end{align}
if
$|X_{(n-1)\delta}^\delta-Y_{(n-1)\delta}^\delta|\le \kk/(1+C_R)$.
\end{lemma}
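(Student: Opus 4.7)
\noindent\textbf{Proof plan for Lemma \ref{lem1}.}
The plan is to reduce both claims to a single one-step analysis of \eqref{E4}, conditional on $(X_{(n-1)\delta}^\delta,Y_{(n-1)\delta}^\delta)$. First I would evaluate the conditional mean $\bar\E[|X_{n\delta}^\delta-Y_{n\delta}^\delta|\,|\,(X_{(n-1)\delta}^\delta,Y_{(n-1)\delta}^\delta)]$ in closed form, and then bound it through $({\bf H}_1)$, $({\bf H}_2)$ and \eqref{E3}. For \eqref{EE}, I would exploit the explicit ``meeting'' branch of the coupling directly.

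\emph{Computing the conditional expectation.} Write $\hat Z:=\hat X_{(n-1)\delta}^\delta-\hat Y_{(n-1)\delta}^\delta$. Because $U_n$ is uniform on $[0,1]$ and independent of $\xi_n$, and since $\rho(u,z)\nu(\d z)=\nu_u(\d z)$ by \eqref{E6}, the three branches of \eqref{E4} are selected with conditional probabilities $\ff{1}{2}\nu_{-\delta^{-1/2}(\hat Z)_\kk}(\R^d)$, $\ff{1}{2}\nu_{\delta^{-1/2}(\hat Z)_\kk}(\R^d)$ and the complementary mass. Using the Gaussian symmetry $\nu_{-u}(\R^d)=\nu_u(\R^d)$ (so the first two probabilities coincide) and the fact that $(\hat Z)_\kk$ is parallel to $\hat Z$ with $|\hat Z-(\hat Z)_\kk|+|\hat Z+(\hat Z)_\kk|=2|\hat Z|$, the three contributions telescope and I expect the conditional mean to equal exactly $|\hat Z|$.

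\emph{Bounding $|\hat Z|$.} Set $r:=|X_{(n-1)\delta}^\delta-Y_{(n-1)\delta}^\delta|$, $\Delta:=\pi^{(\delta)}(X_{(n-1)\delta}^\delta)-\pi^{(\delta)}(Y_{(n-1)\delta}^\delta)$ and $B:=b^{(\delta)}(\pi^{(\delta)}(X_{(n-1)\delta}^\delta))-b^{(\delta)}(\pi^{(\delta)}(Y_{(n-1)\delta}^\delta))$, so that $\hat Z=\Delta+\delta B$ and $|\Delta|\le r$ by \eqref{E3}. I would split on whether $X_{(n-1)\delta}^\delta,Y_{(n-1)\delta}^\delta$ both sit in $B_R$. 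In the first regime, $({\bf H}_1)$ gives $|B|\le C_Rr$, whence $|\hat Z|\le(1+C_R\delta)r$, producing the $C_R\delta$ term in \eqref{E9}. In the second regime, I would combine $\<\Delta,B\>\le-K_R^*|\Delta|^2$ from $({\bf H}_2)$ with $|B|\le K_R\delta^{-\theta}r$ from $({\bf H}_1)$; expanding
\[
|\hat Z|^2=|\Delta|^2+2\delta\<\Delta,B\>+\delta^2|B|^2\le(1-2\delta K_R^*)r^2+K_R^2\delta^{2-2\theta}r^2,
\]
where I first use $\delta\le(2K_R^*)^{-1}$ from $\delta_1$ to ensure $1-2\delta K_R^*\ge0$ before substituting $|\Delta|\le r$, and then invoke $K_R^2\delta^{1-2\theta}\le K_R^*$ from $\delta_1$ to collapse the right-hand side to $(1-\delta K_R^*)r^2$. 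Taking square roots and applying $\ss{1-u}\le 1-u/2$ yields $|\hat Z|\le(1-\ff{1}{2}\delta K_R^*)r$, which is the second term in \eqref{E9}. Since $r>2R$ automatically forces this second regime ($B_R$ has diameter $2R$), the two estimates combine to give \eqref{E9}.

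\emph{Coupling probability and main obstacle.} For \eqref{EE}, the same two estimates show that $r\le\kk/(1+C_R)$ implies $|\hat Z|\le\kk$ (since $\delta\le 1$ in the Lipschitz regime and $|\hat Z|\le r$ in the dissipative regime), hence $(\hat Z)_\kk=\hat Z$. On this event, the first branch of \eqref{E4} sends $Y_{n\delta}^\delta$ exactly onto $X_{n\delta}^\delta$, and its conditional probability is $\ff{1}{2}\nu_{-\delta^{-1/2}\hat Z}(\R^d)\ge\ff{1}{2}J(\kk\delta^{-1/2})$, using that $J$ is non-increasing and $|\delta^{-1/2}\hat Z|\le\kk\delta^{-1/2}$. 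The main technical delicacy, rather than a true obstacle, lies in the absorption argument above: the $K_R\delta^{-\theta}$ scaling of the Lipschitz constant outside $B_R$ produces a squared error of order $\delta^{2-2\theta}$ that must be dominated by the $O(\delta)$ dissipation gain, and this is precisely what forces the threshold $\delta_1\le(K_R^*/K_R^2)^{1/(1-2\theta)}$ in \eqref{EE6*}.
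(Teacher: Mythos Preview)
Your proposal is correct and follows essentially the same approach as the paper's proof: reducing to a one-step analysis, computing $\bar\E^{(x,y)}|X_\delta^\delta-Y_\delta^\delta|=|\hat z^\delta|$ exactly via the symmetry $\nu_u(\R^d)=\nu_{-u}(\R^d)$, bounding $|\hat z^\delta|^2$ through \eqref{E3}, $({\bf H}_1)$ and $({\bf H}_2)$ with the same absorption of the $K_R^2\delta^{2-2\theta}$ term into the dissipation gain, and deducing \eqref{EE} from $(\hat z^\delta)_\kk=\hat z^\delta$ together with the monotonicity of $J$. The paper carries out precisely these steps (see equations \eqref{E12} and \eqref{E0}), so there is nothing substantive to add.
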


Before we proceed, we make some remarks on Lemma \ref{lem1}.
\begin{remark}
Inequality \eqref{E9}
characterizes the drift condition for the scheme \eqref{EW}. Particularly, it shows that the algorithm under investigation
does not need to be  dissipative in the short distance but dissipative merely in the long distance.
Inequality \eqref{EE} reveals that, for any fixed $n\ge1,$
  $X_{n\delta}^\delta$ coincides with $Y_{n\delta}^\delta$ with positive probability when
 the distance between the previous values $X_{(n-1)\delta}^\delta$ and $Y_{(n-1)\delta}^\delta$ is small.
  Furthermore, we would like to stress that the incorporation
  of the  threshold $\kk$ is indispensable. Since the support of Gaussian measures is  $\R^d$, it  would be
  quite natural to take the threshold $\kk=\8.$ Unfortunately,
 once we take $\kk=\8,$ the right hand side of \eqref{EE} should be  reformulated as
 $\frac{1}{2}J(\delta^{-{1}/{2}} (1+C_R)|X_{(n-1)\delta}^\delta-Y_{(n-1)\delta}^\delta|)$,
 see the proof of \eqref{EE*} in Lemma \ref{lem1}.
  However, this quantity might approach zero when $\delta$ goes to zero, even if the distance $|X_{(n-1)\delta}^\delta-Y_{(n-1)\delta}^\delta|$ is sufficiently small.
  Whereas, by choosing appropriately the critical point $\kk$
(e.g., $\kk=\delta^{{1}/{2}}$),
the number on the right hand side of \eqref{EE} is strictly positive even if the step size decays to zero.
 This shows the importance of setting the threshold $\kk < \infty$.
\end{remark}

Now we move to  finish the proof of Lemma \ref{lem1}.

\begin{proof}[Proof of Lemma $\ref{lem1}$]
In order to obtain \eqref{E9} and \eqref{EE}, it suffices to show that for any $\delta\in(0,\delta_1]$, and $x,y\in\R^d,$
\begin{equation}\label{E9*}
\begin{split}
\bar\E^{(x,y)}\big|X_\delta^\delta-Y_\delta^\delta\big|:&=\bar\E\big(\big|X_\delta^\delta-Y_\delta^\delta\big|\big|(X_0^\delta,Y_0^\delta)=(x,y)\big)\\
&\le  \Big(1+C_R\delta  \I_{\{|x-y|\le 2R\}}- \frac{1}{2}K_R^*\delta \I_{\{|x-y|>2R\}}\Big)|x-y|,
\end{split}
\end{equation}
and for any $\delta\in(0,\delta_1]$, $\kk>0$, and $x,y\in\R^d$ with $|x-y|\le\kk/(1+C_R)$,
\begin{align}\label{EE*}
 \bar\P^{(x,y)}\big(X_\delta^{\delta}
 = Y_\delta^{\delta}\big):=\bar\E^{(x,y)}\I_{\{X_\delta^{\delta}=
 Y_\delta^{\delta}\}}
 \ge \frac{1}{2}J\big(\kk\delta^{-{1}/{2}} \big).
\end{align}

In the sequel,  we shall fix $(X_0^\delta,Y_0^\delta)=(x,y)$ and
stipulate $\delta\in(0,\delta_1]$ so that
\begin{align}\label{WW}
 1-2K_R^*\delta\ge0 \quad \mbox{ and }  \quad K_R^*-K_R^2\delta^{1-2\theta}\ge0.
\end{align}
From \eqref{E4*}, it is easy to see that
\begin{equation}\label{E13}
\begin{split}
X_\delta^{\delta}-Y_\delta^{\delta}=&\hat z^{\delta}-(\hat z^{\delta})_\kk\I_{\{U_1\le \frac{1}{2}\rho(-\delta^{-{1}/{2}}(\hat z^{\delta})_\kk,\xi_1)\}}\\
&+ (\hat z^{\delta})_\kk\I_{\{ \frac{1}{2}\rho(-\delta^{-{1}/{2}}(\hat z^{\delta})_\kk,\xi_1)\le U_1\le \frac{1}{2}(\rho(-\delta^{-{1}/{2}}(\hat z^{\delta})_\kk,\xi_1)+\rho(\delta^{-{1}/{2}}(\hat z^{\delta})_\kk,\xi_1))\}}.
\end{split}
\end{equation}
Whence, we find that
\begin{align*}
\bar\E^{(x,y)}\big|X_\delta^\delta-Y_\delta^\delta\big|&= \frac{1}{2}\big|\hat z^{\delta}-(\hat z^{\delta})_\kk\big|\nu_{-\delta^{-{1}/{2}}(\hat z^{\delta})_\kk}(\R^d)+\frac{1}{2}\big|\hat z^{\delta}+(\hat z^{\delta})_\kk\big|\nu_{\delta^{-{1}/{2}}(\hat z^{\delta})_\kk}(\R^d)\\
&\quad+|\hat z^{\delta}|\Big(1-\frac{1}{2}\nu_{-\delta^{-{1}/{2}}(\hat z^{\delta})_\kk}(\R^d)-\frac{1}{2}\nu_{\delta^{-{1}/{2}}
(\hat z^{\delta})_\kk}(\R^d)\Big)\\
&=\frac{1}{2}|\hat z^{\delta}|\bigg(1-\frac{|\hat z^{\delta}|\wedge\kk}{|\hat z^{\delta}|}\I_{\{|\hat z^{\delta}|\neq 0\}}\bigg)\nu_{-\delta^{-{1}/{2}}(\hat z^{\delta})_\kk}(\R^d)\\
&\quad+\frac{1}{2}|\hat z^{\delta}|\bigg(1+\frac{|\hat z^{\delta}|\wedge\kk}{|\hat z^{\delta}|}\I_{\{|\hat z^{\delta}|\neq 0\}}\bigg)\nu_{\delta^{-{1}/{2}}(\hat z^{\delta})_\kk}(\R^d)\\
&\quad+|\hat z^{\delta}|\Big(1-\frac{1}{2}\nu_{-\delta^{-{1}/{2}}(\hat z^{\delta})_\kk}(\R^d)-\frac{1}{2}\nu_{\delta^{-{1}/{2}}(\hat z^{\delta})_\kk}(\R^d)\Big).
\end{align*}
This, besides the fact that $\nu_u(\R^d)=\nu_{-u}(\R^d)$ for all $u\in\R^d$,
yields
\begin{equation}\label{E12}
\bar\E^{(x,y)}\big|X_\delta^\delta-Y_\delta^\delta\big| =|\hat z^{\delta}|.
\end{equation}

By utilizing  \eqref{E3},  (${\bf H}_1$) as well as (${\bf H}_2$), we deduce  that
\begin{align*}
|\hat z^{\delta}|^2&=\big|\pi^{(\delta)}(x)-\pi^{(\delta)}(y)\big|^2+2\<\pi^{(\delta)}(x)-\pi^{(\delta)}(y),b^{(\delta)}(\pi^{(\delta)}(x)) -b^{(\delta)}(\pi^{(\delta)}(y))\>\delta\\
&\quad+\big|b^{(\delta)}(\pi^{(\delta)}(x)) -b^{(\delta)}(\pi^{(\delta)}(y))\big|^2\delta^2\\
&\le \big(1+ C_R\delta\big)^2|x-y|^2\I_{\{x,y\in B_R\}}\\
&\quad+\big((1-2K_R^*\delta)\big|\pi^{(\delta)}(x)-\pi^{(\delta)}(y)\big|^2+K_R^2\delta^{2(1- \theta)}|x-y|^2\big)\I_{\{x\in B_R^c\}\cup\{y\in B_R^c\}}.
\end{align*}
As a consequence,  making use  of \eqref{E3} again, along with \eqref{WW},  implies that
\begin{equation}\label{E0}
\begin{split}
|\hat z^{\delta}|^2
&\le \big(1+ C_R\delta\big)^2|x-y|^2\I_{\{x,y\in B_R\}}\\
&\quad+\big( 1- K_R^*\delta -\big(K_R^*-K_R^2\delta^{  1- 2\theta }\big)\delta \big)|x-y|^2\I_{\{x\in B_R^c\}\cup\{y\in B_R^c\}}\\
&\le \big(1+ C_R\delta\big)^2|x-y|^2\I_{\{x,y\in B_R\}} +\big( 1- K_R^*\delta  \big)|x-y|^2\I_{\{x\in B_R^c\}\cup\{y\in B_R^c\}}.
\end{split}
\end{equation}
Subsequently, the preceding estimate, accompanying with  the inequality:
 $(1-r)^{\frac{1}{2}}\le 1- r/2$ for all $ r\in[0,1], $
enables us to derive from \eqref{E12} that
\begin{align*}
\bar\E^{(x,y)}\big|X_\delta^\delta-Y_\delta^\delta\big|&\le  (1+ C_R\delta) |x-y| \I_{\{x,y\in B_R\}} +   ( 1- K_R^*\delta/2   )|x-y| \I_{\{x\in B_R^c\}\cup\{y\in B_R^c\}}\\
&\le
\Big(1+C_R\delta \I_{\{|x-y|\le 2R\}}- \frac{1}{2}K_R^*\delta \I_{\{|x-y|>2R\}}\Big)|x-y|.
\end{align*}
As a result, \eqref{E9*} follows immediately.

By invoking \eqref{E0} once more, we directly have for $\delta\in(0,1],$
\begin{align*}
|\hat z^{\delta}|\le (1+C_R\delta)|x-y|\le (1+C_R)|x-y|.
\end{align*}
This thus implies that $\hat z^{\delta}-(\hat z^{\delta})_\kk=0$ for any $x,y\in\R^d$ with $|x-y|\le \kk/(1+C_R)$. Whereafter,
by taking advantage of    the fact that $[0,\8)\ni r\mapsto J(r)$ is non-increasing,
it follows from \eqref{E13} that for any $x,y\in\R^d$ with $|x-y|\le \kk/(1+C_R)$,
\begin{align*}
\bar\P^{(x,y)}\big( X_\delta^{\delta}
= Y_\delta^{\delta}\big) \ge \frac{1}{2}\nu_{-\delta^{-{1}/{2}} (\hat z^{\delta})_\kk}(\R^d) \ge \frac{1}{2}J\big(\kk\delta^{-{1}/{2}} \big).
\end{align*}
Therefore, the assertion \eqref{EE} is attainable.
\end{proof}

With Lemmas \ref{coupling} and \ref{lem1} at hand, we are in position to complete the proof of Theorem \ref{thm}.
\begin{proof}[Proof of Theorem $\ref{thm}$]
For the quantities $c_*,a>0$, defined in \eqref{EP6},
we define an auxiliary    function and an associated   metric function as below:
\begin{align*}
f(r) =1-\e^{-c_*r}+c_*\e^{-2Rc_*}r,\quad r\ge0;
\end{align*}
and
\begin{align*}
 \hat\rho_1(x,y)= a\I_{\{ x\neq y\}}+f(|x-y|),\quad x,y\in\R^d.
\end{align*}
Note that the metrics $\rho_1$ (defined in \eqref{EW1}) and $\hat \rho_1$ are comparable.
Indeed, since $c_* \e^{-2Rc_*} \le f'(r) \le c_*(1+ \e^{-2Rc_*})$ for all $r\ge0$, we see that for all  $x,y\in\R^d,$
$$
\min \{ a, c_* \e^{-2Rc_*} \} \rho_1(x,y) \le \hat\rho_1(x,y) \le \max \{ a, c_*(1+ \e^{-2Rc_*}) \}  \rho_1(x,y).
$$
Therefore,  it is sufficient to demonstrate that  for any $\delta\in(0, \delta_1]$, $x,y\in\R^d,$ and integer $n\ge0,$
\begin{align}\label{WW1}
\mathbb W_{\hat\rho_1}\big(\mathscr L_{X_{n\delta}^{\delta,x}},\mathscr L_{ X_{n\delta}^{\delta,y}}\big)\le  \e^{-\lambda_1 n\delta}\hat\rho_1(x,y).
\end{align}
From \eqref{WW1}, we can obtain \eqref{W} with $C_1^*: = \frac{\max \{ a, c_*(1+ \e^{-2Rc_*}) \} }{\min \{ a, c_* \e^{-2Rc_*} \}}$.

Trivially,  \eqref{WW1}
holds true for the case $x,y\in\R^d$ with $x=y\in\R^d$. So, in the  analysis below, we assume $(X_0^\delta,Y_0^\delta)=(x,y)$ with $x\neq y.$ From Lemma \ref{coupling}, in addition to the tower property of conditional expectations,  it is easy to see that
\begin{align*}
\mathbb W_{\hat\rho_1}\big(\mathscr L_{X_{n\delta}^{\delta,x}},\mathscr L_{ X_{n\delta}^{\delta,y}}\big)&\le  \bar\E^{(x,y)}{\hat\rho_1}(X_{n\delta}^{\delta}, Y_{n\delta}^{\delta})= \bar\E^{(x,y)}\Big(\bar\E^{(x,y)}\big({\hat\rho_1}(X_{n\delta}^{\delta}, Y_{n\delta}^{\delta})\big|\big(X_{(n-1)\delta}^\delta,Y_{(n-1)\delta}^\delta\big)\big)\Big).	
\end{align*}
Further, recall that  $(\xi_n)_{n\ge1}$ (resp. $(U_n)_{n\ge1}$) are i.i.d random variables
and $(\xi_n)_{n\ge1}$ is independent of $(U_n)_{n\ge1}$.
As long as  there exists a constant $\lambda \in(0,1)$ (whose exact value will be given at the end of proof) such that for any $\delta\in(0,\delta_1]$,
\begin{align}\label{EW7}
\bar\E^{(x,y)}\hat\rho_1(X_\delta^{\delta}, Y_\delta^{\delta})\le (1-\lambda \delta)\hat\rho_1(x,y),
\end{align}
then performing an inductive argument yields that
\begin{align*}
\bar\E^{(x,y)}\hat\rho_1(X_{n\delta}^{\delta}, Y_{n\delta}^{\delta})\le (1-\lambda\delta)^n\rho(x,y)\le \e^{-\lambda n\delta}\hat\rho_1(x,y),
\end{align*}
where the second inequality is available due to the basic inequality: $r^a\le \e^{a(r-1)}$ for all $r,a>0.$ Therefore, to achieve the desired assertion \eqref{WW1},  it remains  to establish  \eqref{EW7}.

In the subsequent context, we shall choose the threshold $\kk= \delta^{1/2}$.
According to the definition of $\hat\rho_1,$ we  have
\begin{align*}
\hat\rho_1(X_\delta^{\delta},Y_\delta^{\delta})-\hat\rho_1(x,y)&=-a\I_{\{|X_\delta^{\delta}-Y_\delta^{\delta}|=0\}}+f(|X_\delta^{\delta}-Y_\delta^{\delta}|)-f(|x-y|).
\end{align*}
Thus, since  for any $r>0,$
\begin{align*}
f'(r)=c_*\big(\e^{-c_*r}+\e^{-2c_*R}\big), \quad f''(r)=-c^2_*\e^{-c_*r}<0 \quad \mbox{ and } \quad  f'''(r)=c^3_*\e^{-c_*r}>0,
\end{align*}
  Taylor's expansion enables us to derive that
\begin{equation}\label{WE}
\begin{split}
 \bar\E^{(x,y)}\hat\rho_1\big( X_\delta^{\delta},Y_\delta^{\delta}\big)-\hat\rho_1(x,y)
&\le-a\bar\P^{(x,y)}\big(X_\delta^{\delta}=Y_\delta^{\delta}\big)\\
&\quad+c_*\big(\e^{-c_*|x-y|}+\e^{-2Rc_* }\big)\Big(\bar\E^{(x,y)}|X_\delta^{\delta}-Y_\delta^{\delta}| - |x-y| \Big)\\
&\quad-\frac{1}{2}c^2_*\e^{-c_*|x-y|}\\
&\qquad\times\bar\E^{(x,y)}\Big(\big(|X_\delta^{\delta}-Y_\delta^{\delta}| - |x-y|\big)^2\I_{\{|X_\delta^{\delta}-Y_\delta^{\delta}|<|x-y|\}}\Big)\\
&=:I_1(\delta,\kk,x,y)+I_2(\delta,\kk,x,y) -\frac{1}{2}c^2_*\e^{-c_*|x-y|}I_3(\delta,\kk,x,y).
\end{split}
\end{equation}
Hereinafter,
for three cases based on the range of values of $|x-y|$,
 we shall estimate terms $I_1$, $I_2$, and $I_3$, to reach \eqref{EW7}.

  Case (i): $x,y\in\R^d$ with $0<|x-y|\le  r_0:= \delta^{1/2}/(1+C_R)$. Obviously, we have $ \delta^{1/2}/(1+C_R)\le 2R$ due to $R,C_R\ge1$ and  $\delta\in(0, 1].$
In the present case, in view of  \eqref{EE} and $\kk=\delta^{1/2}$,
one  has
\begin{align*}
I_1(\delta,\kk,x,y)\le-\frac{1}{2}aJ(1).
\end{align*}
Next, by virtue of \eqref{E9} and $|x-y|\le \delta^{1/2}/(1+C_R)$, we find that for  $\delta\in(0,1],$
\begin{align*}
I_2(\delta,\kk,x,y)&\le  c_*C_R\delta\big(\e^{-c_*|x-y|}+\e^{-2Rc_* } \big) |x-y|\le 2c_*.
\end{align*}
Whereafter, taking the estimates on $I_1$ and $I_2$ into consideration,
as well as $I_3\ge0,$ yields
\begin{equation}\label{EW2}
\begin{split}
\bar\E^{(x,y)}\hat\rho_1(X_\delta^{\delta},Y_\delta^{\delta})-\hat\rho_1(x,y)&\le -\frac{1}{2}aJ(1)+ 2c_* \le-\frac{1}{4}aJ(1),
\end{split}
\end{equation}
where the second inequality is available  by taking advantage of the
definition of $a$ in \eqref{EP6}.
Furthermore, in the present case $|x-y| \leq 2R$,
by invoking the fact that $\max_{r\ge0}r\e^{- r}=1/\e$,
it is immediate to see that
 \begin{align}\label{W*}
\hat\rho_1(x,y)\le 1+a +2Rc_*\e^{-2Rc_*}\le a+2.
\end{align}
As a consequence, we deduce from \eqref{EW2} that for $\delta\in(0,1],$
\begin{align*}
\bar\E^{(x,y)}\hat\rho_1(X_\delta^{\delta},Y_\delta^{\delta})\le   (1-  \lambda_{11}\delta )\hat\rho_1(x,y),
\end{align*}
where $\lambda_{11}:=\frac{aJ(1)  }{4(a+2 )}\in(0,1)$.

 Case (ii): $x,y\in\R^d$ with $r_0\le |x-y|\le 2R$. Concerning such setting, notice from \eqref{E9}  that
\begin{equation}\label{EW3}
\begin{split}
 I_2(\delta,\kk,x,y)
 &\le c_*C_R\delta \big( \e^{-c_*|x-y|}+ \e^{-2Rc_* } \big)|x-y| \le 4 c_*RC_R\delta  \e^{-c_*|x-y|}.
\end{split}
\end{equation}
Next,  from  \eqref{E13} and $\kk=\delta^{1/2}$, in addition to the non-increasing property of the mapping $[0,\8)\ni r\mapsto J(r)$,
we infer  that
\begin{align*}
I_3(\delta,\kk,x,y)
&\ge \frac{1}{2}\big(|\hat z^{\delta}|-(\kk\wedge|\hat z^{\delta}|) - |x-y|\big)^2\I_{\{|\hat z^{\delta}|-(\kk\wedge|\hat z^{\delta}|)<|x-y|\}} \nu_{
-\delta^{-{1}/{2}}(\hat z^{\delta})_\kk}(\R^d)\\
&\ge\frac{1}{2}\big(|\hat z^{\delta}|-(\kk\wedge|\hat z^{\delta}|) - |x-y|\big)^2\I_{\{|\hat z^{\delta}|-(\kk\wedge|\hat z^{\delta}|)<|x-y|\}} J(1).
\end{align*}
In view of  \eqref{E0}, it is easy to see that
\begin{align*}
|\hat z^\delta|\le (1+C_R\delta)|x-y|\le |x-y|+2RC_R \delta.
\end{align*}
This enables us to obtain, since $\kk=\delta^{1/2}$, that
\begin{align*}
|\hat z^{\delta}|-(\kk\wedge|\hat z^{\delta}|) - |x-y|&=  - |x-y| \I_{\{|\hat z^{\delta}|\le \kk\}}+\big(|\hat z^{\delta}|- |x-y|- \delta^{1/2} \big)\I_{\{|\hat z^{\delta}|> \kk\}}\\
&\le - |x-y| \I_{\{|\hat z^{\delta}|\le \kk\}}+\big(2RC_R\delta -  \delta^{1/2} \big)\I_{\{|\hat z^{\delta}|> \kk\}}\\
&\le - |x-y| \I_{\{|\hat z^{\delta}|\le \kk\}}  - \frac{1}{2} \delta^{1/2}  \I_{\{|\hat z^{\delta}|> \kk\}}\\
&\le - \frac{\delta^{1/2}}{1+C_R} ,
\end{align*}
where the second inequality is valid due to $\delta\le (4RC_R )^{-2}$ for $\delta\in(0,\delta_1]$,
cf.\ \eqref{EE6*},
 and the last display is true by exploiting
$\delta^{1/2}/(1+C_R)=r_0 \le |x-y|$
and $C_R\ge1$.
Consequently, we arrive at
\begin{align}\label{EP}
I_3(\delta,\kk,x,y)
\ge \frac{J(1)\delta}{2(1+C_R)^2} .
\end{align}
This estimate, combining  \eqref{EW3} with $I_1\le0$ and the definition of
$c_*$ in \eqref{EP6},
yields
 \begin{align*}
\bar\E^{(x,y)}\hat\rho_1(X_\delta^{\delta},Y_\delta^{\delta})-\hat\rho_1(x,y)
 \le -
4RC_Rc_*\e^{-2Rc_*}\delta.
\end{align*}
  Accordingly, we obtain from \eqref{W*} that
 \begin{align*}
\bar\E^{(x,y)}\hat\rho_1(X_\delta^{\delta},Y_\delta^{\delta})
&\le   (1-\lambda_{12}\delta) \hat\rho_1(x,y),
\end{align*}
where $\lambda_{12}:=\frac{
4RC_Rc_*\e^{-2Rc_*} }{ a+2} \in(0,1)$ by noting that $2 C_R/( a+2)<1$
owing to $a>2 C_R $.

 Case (iii): $x,y\in\R^d$ with $ |x-y|\ge 2R$. 
  Note that \eqref{E9} implies that $\bar\E^{(x,y)}|X_\delta^{\delta}-Y_\delta^{\delta}| - |x-y| \le - \frac{1}{2}K_R^*\delta|x-y|$. Hence, using $\e^{-c_*|x-y|} \ge 0$ in the bound for $I_2$, and due to $I_1\le0$ and $I_3\ge0$, we deduce from \eqref{WE} that
\begin{align*}
 \bar \E^{(x,y)}\hat\rho_1(X_\delta^{\delta},Y_\delta^{\delta})-\hat\rho_1(x,y)&\le c_*\big(\e^{-c_*|x-y|}+\e^{-2Rc_* }\big)\Big(\bar\E^{(x,y)}|X_\delta^{\delta}-Y_\delta^{\delta}| - |x-y| \Big)\\
&\le -\frac{1}{2} K_R^*\delta c_* \e^{-2Rc_* } |x-y|.
\end{align*}
This
implies that
\begin{align*}
\bar\E^{(x,y)}\hat\rho_1(X_\delta^{\delta},Y_\delta^{\delta})\le (1-\lambda_{13}\delta )\hat\rho_1(x,y),
\end{align*}
where
$\lambda_{13} = \frac{ R c_*K_R^*\e^{-2Rc_* }  }{a+2}$.
Indeed,
 since for $x,y\in\R^d$ with $|x-y|\ge2R$,
$$\hat\rho_1(x,y) \le a +1+  c_* \e^{-2Rc_*}|x-y|\le \Big(\frac{a+1}{2R}+c_* \e^{-2Rc_*}\Big)|x-y|,$$
we observe that $-K_R^*\delta c_* \e^{-2Rc_* } |x-y| \le - \lambda_{13}\delta \hat \rho_1(x,y)$ for $|x-y|\ge 2R$ by making use of the fact that
\begin{align*}
 -\frac{1}{2} K_R^*  c_* \e^{-2Rc_* } |x-y|
 &\le  -\frac{  K_R^*  Rc_* \e^{-2Rc_* }}{ a+1 +2Rc_* \e^{-2Rc_*}}\hat \rho_1(x,y)
\end{align*}
and  $\max_{r\ge0}r\e^{- r}=1/\e<1$.
 Furthermore, we notice that $\lambda_{13} \in (0,1)$ since
$a>K_R^*$ and  $\max_{r\ge0}r\e^{- r}=1/\e<1$.
To summarize,
based on the analysis above,  \eqref{EW7} is available   by taking $\lambda=\lambda_{11}\wedge\lambda_{12}\wedge\lambda_{13}$.
\end{proof}

\section{Proof of Theorem \ref{add}}\label{section_add}
Before we start the proof of Theorem \ref{add}, we first claim that $(X_{n\delta}^\delta)_{n\ge0}$ satisfies the Lyapunov condition in the semigroup type.
\begin{lemma}\label{Lya}
Under  Assumptions of Theorem $\ref{add}$,   for any $\delta\in(0,\delta_2]$ and any integer $n\ge0,$
\begin{align}\label{T1}
\E\big(\big(1+\big|X_{(n+1)\delta}^\delta\big|^2\big)\big|X_{n\delta}^\delta\big)\le (1-K_R^*\delta/2)\big(1+|X_{n\delta}^\delta|^2\big)+c_0\delta,
\end{align}
where $c_0>0$ was defined
 in \eqref{EP3}.
\end{lemma}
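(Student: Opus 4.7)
The plan is to condition on $X_{n\delta}^\delta=x$, exploit that $\xi_{n+1}$ is a standard Gaussian in $\R^d$ independent of $X_{n\delta}^\delta$ (so $\E\xi_{n+1}=0$ and $\E|\xi_{n+1}|^2=d$) to write
\begin{align*}
\E\big(|X_{(n+1)\delta}^\delta|^2\,\big|\,X_{n\delta}^\delta=x\big)=|\pi^{(\delta)}(x)|^2+2\delta\big\<\pi^{(\delta)}(x),b^{(\delta)}(\pi^{(\delta)}(x))\big\>+\delta^2\big|b^{(\delta)}(\pi^{(\delta)}(x))\big|^2+d\delta,
\end{align*}
and then split into the two regimes $x\in B_R$ and $x\in B_R^c$, applying $({\bf H}_1)$ and $({\bf H}_2)$ at $y={\bf 0}$. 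This is legitimate because the assumption $\pi^{(\delta)}({\bf 0})={\bf 0}$ combined with \eqref{E3} also supplies the crucial contraction $|\pi^{(\delta)}(x)|\le|x|$, while \eqref{binfty} provides the uniform scalar bound $|b^{(\delta)}({\bf 0})|\le b_0^{(\infty)}$ needed to handle the non-centred part of $b^{(\delta)}$.

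In the regime $x\in B_R$, all terms are bounded by constants depending only on $R$, $C_R$, $b_0^{(\infty)}$: $({\bf H}_1)$ at $y={\bf 0}$ yields $|b^{(\delta)}(\pi^{(\delta)}(x))|\le C_R|x|+b_0^{(\infty)}$, and repeated use of $2ab\le a^2+b^2$ together with $\delta\le 1$ produces a bound of the shape $|x|^2+\delta[(1+2C_R+2C_R^2)|x|^2+3(b_0^{(\infty)})^2]$. Adding $d\delta$ and comparing with $(1-K_R^*\delta/2)(1+|x|^2)+c_0\delta$, the residual discrepancy is absorbed into the first branch of the maximum defining $c_0$ in \eqref{EP3}, using $R\ge 1$ to control $1+|x|^2\le 2R^2$.

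In the regime $x\in B_R^c$, the dissipativity $({\bf H}_2)$ applied to $(x,{\bf 0})$ gives
\begin{align*}
2\delta\big\<\pi^{(\delta)}(x),b^{(\delta)}(\pi^{(\delta)}(x))-b^{(\delta)}({\bf 0})\big\>\le-2K_R^*\delta|\pi^{(\delta)}(x)|^2,
\end{align*}
and the residual cross term $2\delta\<\pi^{(\delta)}(x),b^{(\delta)}({\bf 0})\>$ is treated by Young's inequality with parameter $K_R^*/2$, producing the $(2/K_R^*)(b_0^{(\infty)})^2$ piece in the second branch of $c_0$. The quadratic tail $\delta^2|b^{(\delta)}(\pi^{(\delta)}(x))|^2$ is handled via $(a+b)^2\le(1+\eta)a^2+(1+1/\eta)b^2$ with the sharp choice $\eta=2^{1-2\theta}-1>0$ (the origin of the factor $1+1/(2^{1-2\theta}-1)$ in $c_0$) together with the outside-$B_R$ Lipschitz bound $|b^{(\delta)}(\pi^{(\delta)}(x))-b^{(\delta)}({\bf 0})|\le K_R\delta^{-\theta}|x|$ from $({\bf H}_1)$, yielding at most $2^{1-2\theta}K_R^2\delta^{2-2\theta}|x|^2+(1+1/(2^{1-2\theta}-1))\delta^2(b_0^{(\infty)})^2$. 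The step-size restriction $\delta\le\frac{1}{2}(K_R^*/K_R^2)^{1/(1-2\theta)}$ baked into $\delta_2$ is exactly what reduces $2^{1-2\theta}K_R^2\delta^{1-2\theta}$ to at most $K_R^*$; combined with the dissipative gain $-(3K_R^*/2)\delta|\pi^{(\delta)}(x)|^2$ and the contraction $|\pi^{(\delta)}(x)|\le|x|$, the net coefficient of $|x|^2$ collapses to $1-K_R^*\delta/2$.

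Adding $d\delta$ and then $1$ to both sides, and writing $1=(1-K_R^*\delta/2)+K_R^*\delta/2$, repackages everything into the form \eqref{T1}. The main technical obstacle is the outside-$B_R$ interplay between the super-linearly-growing $\delta^2|b^{(\delta)}|^2$ term, which scales like $\delta^{2-2\theta}|x|^2$, and the linear dissipative gain of order $\delta|x|^2$: these two rates balance only after the careful choice of the Young parameter $\eta=2^{1-2\theta}-1$ and the precise step-size threshold built into $\delta_2$, and it is exactly this matching that produces the constants defining $c_0$ in \eqref{EP3}.
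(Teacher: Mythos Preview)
Your proposal is correct and follows essentially the same route as the paper: expand the conditional second moment using independence of $\xi_{n+1}$, split into the cases $x\in B_R$ and $x\in B_R^c$, apply $({\bf H}_1)$ and $({\bf H}_2)$ at $y={\bf 0}$ (using $\pi^{(\delta)}({\bf 0})={\bf 0}$ and $|\pi^{(\delta)}(x)|\le|x|$), and in the outside-$B_R$ regime handle the $\delta^2|b^{(\delta)}|^2$ term via the Young split $(a+b)^2\le(1+\eta)a^2+(1+1/\eta)b^2$ with $\eta=2^{1-2\theta}-1$, so that the step-size restriction $\delta\le\tfrac12(K_R^*/K_R^2)^{1/(1-2\theta)}$ in $\delta_2$ reduces $2^{1-2\theta}K_R^2\delta^{1-2\theta}$ to at most $K_R^*$. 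The only cosmetic difference is the order in which the contraction $|\pi^{(\delta)}(x)|\le|x|$ is applied to the cross term; the resulting constants coincide with the paper's $c_1$, $c_2$ and hence with $c_0$ in \eqref{EP3}.
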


\begin{proof}
Below, we shall fix $\delta\in(0,\delta_2]$ so that
\begin{align}\label{ER0}
1-2K_R^*\delta \ge0\quad \mbox{ and } \quad \frac{K_R^*}{K_R^2\delta^{1-2\theta}}-1\ge \vv_0:=2^{1-2\theta}-1.
\end{align}
From $({\bf H}_1)$ and $({\bf H}_2)$,
as well as
$\pi^{(\delta)}({\bf0})={\bf0}$ and \eqref{ER0},
we  deduce  that for any $x\in\R^d$,
\begin{equation}\label{ER*}
\begin{split}
\Lambda(x):&=\big|\pi^{(\delta)}(x)\big|^2+2\big\<\pi^{(\delta)}(x),b^{(\delta)}(\pi^{(\delta)}(x))\big\>\delta
+\big|b^{(\delta)}(\pi^{(\delta)}(x))\big|^2\delta^2\\
&\le \big(|x|^2+(1+2C_R+2C_R^2)|x|^2\delta  +3\big(b^{(\8)}_0\big)^2\delta\big)\I_{\{x\in B_R\}}\\
&\quad+\Big((1-2K_R^*\delta)\big|\pi^{(\delta)}(x)\big|^2+\frac{1}{2}K_R^*|x|^2\delta+(1+ \vv_0)K_R^2|x|^2\delta^{2(1-\theta)}\\
&\quad\quad\quad+\big(1+1/\vv_0+2/K_R^*\big)\big(b^{(\8)}_0\big)^2\delta\Big)\I_{\{x\in B_R^c\}}\\
&\le\big((1-K_R^*\delta/2)|x|^2+c_1\delta\big)\I_{\{x\in B_R\}} \\
&\quad+\Big((1-2K_R^*\delta)\big|\pi^{(\delta)}(x)\big|^2+\frac{1}{2}K_R^*|x|^2\delta+(1+ \vv_0)K_R^2|x|^2\delta^{2(1-\theta)}\\
&\quad\quad~+\big(1+1/\vv_0+2/K_R^*\big)\big(b^{(\8)}_0\big)^2\delta\Big)\I_{\{x\in B_R^c\}},
\end{split}
\end{equation}
where
$
c_1:=(1+2C_R+2C_R^2+K_R^*/2)R^2+3\big(b^{(\8)}_0\big)^2.
$
Subsequently, combining    \eqref{ER0} with
\eqref{ER*} and  $|\pi^{(\delta)}(x)|\le |x|$  yields that
\begin{align*}
\Lambda(x)
&\le \big((1-K_R^*\delta/2)|x|^2+c_1\delta\big)\I_{\{x\in B_R\}}\\
&\quad+\big((1-K_R^*\delta/2)|x|^2+c_2\delta\big)\I_{\{x\in B_R^c\}},
\end{align*}
where $c_2:=
\big(1+1/\vv_0+2/K_R^*\big)\big(b^{(\8)}_0\big)^2$.
Therefore,  we have that  for any $x\in\R^d,$
\begin{align*}
\Lambda(x)
 \le (1-K_R^*\delta/2)|x|^2+(c_1\vee c_2)\delta.
\end{align*}
This, along with \eqref{EW}, gives that
\begin{align*}
\E\big(\big|X_{(n+1)\delta}^\delta\big|^2\big|X_{n\delta}^\delta\big)&= \Lambda(X_{n\delta}^\delta)+\delta\E\big(|\xi_{n+1}|^2\big|X_{n\delta}^\delta\big)\\
&\le (1-K_R^*\delta/2)|X_{n\delta}^\delta|^2+(c_1\vee c_2+d)\delta,
\end{align*}
where in the identity we  used the independence of $(\xi_n)_{n\ge1}$
 as well as $\E\xi_n={\bf0}$, while in the inequality we employed the fact that  $\E|\xi_n|^2=d$. As a consequence, \eqref{T1} follows immediately.
\end{proof}

Now, we make some comments on Lemma \ref{Lya}.
\begin{remark}
Note that the constant (i.e., $c_0\delta$) involved in  \eqref{T1} is linear with respect to the step size $\delta.$
This fact plays a crucial role in the construction of the set $\mathcal D$, introduced in \eqref{EP4}.
Suppose the constant $c_0\delta$ is replaced by a number, still written as $c_0,$ which is independent of the step size $\delta.$ In this case, by inspecting the proof of Theorem \ref{add} below,  one will take the set $\mathcal D$ that is dependent on the parameter $\delta$ (so we can write it  as $\mathcal D_\delta$). Unfortunately, the diameter of $\mathcal D_\delta$ goes to infinity as the step size $\delta$ approaches
zero, and the corresponding $\lambda_2$ in \eqref{ER6} is also dependent on $\delta$.
Unfortunately,
$\lambda_2$ tends to zero when $\delta$ goes to zero so there will be no
convergence  rate.

Concerning the tamed EM scheme for overdamped Langevin SDEs, by invoking the logarithmic Sobolev inequality
for Gaussian measures, \cite[Proposition 3]{BDMS}
examined a Lyapunov condition in the exponential form.
 In contrast,
 in Lemma \ref{Lya}, we confirm a Lyapunov condition in the polynomial form for a wide range of approximate schemes.
 Moreover, our proof is much more succinct.
\end{remark}

With the help of Lemma \ref{Lya}, we proceed to complete the
\begin{proof}[Proof of Theorem $\ref{add}$]
For $a,\vv_\star>0$   given in \eqref{T3}, define the following  distance function:
\begin{align*}
\hat\rho_2(x,y)=\big(a+\vv_\star (2+|x|^2+|y|^2 )\big)\I_{\{x\neq y\}}+h(|x-y|\wedge (1+r_{\mathcal D})),\quad x,y\in\R^d,
\end{align*}
where
$$h(r):=1-\e^{-c_\star r}, \quad r\ge0$$
with $c_\star>0$ being introduced in \eqref{T0}.  Note that the metrics $\rho_2$ and $\hat\rho_2$ are mutually equivalent.  Therefore,
in order to prove  the desired assertion \eqref{ER6},
 it suffices to show via an inductive argument that there exists $\lambda\in(0,1)$ such that
  for any $\delta\in(0,\delta_2]$, any integer $n\ge0$, and
  all $x,y\in\R^d$ with $x\neq y $,
\begin{align}\label{EP2}
\bar\E^{(x,y)}\hat\rho_{2}(X_\delta^{\delta},Y_\delta^{\delta})\le (1-\lambda n\delta)\hat\rho_2(x,y),
\end{align}
where   $(X_\delta^{\delta},Y_\delta^{\delta})$ is determined by \eqref{E4}.

In the following, we assume $x,y\in\R^d$ with $x\neq y,$ and fix $\delta\in(0,\delta_2].$
By following the reasoning of \eqref{WE}, we derive that
\begin{equation}\label{T2}
\begin{split}
 \bar\E^{(x,y)}\hat \rho_2(X_\delta^{\delta},Y_\delta^{\delta})-\hat \rho_2(x,y)
&\le -a\bar\P^{(x,y)}\big(X_\delta^{\delta}=Y_\delta^{\delta}\big) +c_\star \e^{-c_\star(|x-y|\wedge(1+r_{\mathcal D}))}\\
&\quad \times\Big(\bar\E^{(x,y)}(|X_\delta^{\delta}-Y_\delta^{\delta}|\wedge(1+r_{\mathcal D})) - (|x-y|\wedge(1+r_{\mathcal D})) \Big)\\
&\quad-\frac{1}{2}c^2_\star\e^{-c_\star(|x-y|\wedge(1+r_{\mathcal D}))}\\
&\qquad\times\bar\E^{(x,y)}\Big(\big( |X_\delta^{\delta}-Y_\delta^{\delta}|\wedge(1+r_{\mathcal D})   - (|x-y|\wedge(1+r_{\mathcal D}))\big)^2\\
&\qquad\qquad\qquad\times \I_{\{|X_\delta^{\delta}-Y_\delta^{\delta}|\wedge(1+r_{\mathcal D}) <|x-y|\wedge(1+r_{\mathcal D})\}}\Big)\\
&\quad+  \frac{\vv_\star}{2} \big( -K_R^* (2+|x|^2+|y|^2)+4c_0 \big)\delta.
\end{split}
\end{equation}

Case (i): $x,y\in\R^d$ with $0<|x-y|\le  r_0:= \delta^{{1}/{2}}/(1+C_R)$. Concerning this case, by invoking \eqref{EE} and $\kk=\delta^{{1}/{2}}$,
 we deduce from \eqref{T2}  that
\begin{equation}
\begin{split}
\bar\E^{(x,y)}\hat \rho_2(X_\delta^{\delta},Y_\delta^{\delta})-\hat \rho_2(x,y)&\le -\frac{1}{2}aJ(1) +c_\star \e^{-c_\star(|x-y|\wedge(1+r_{\mathcal D}))}\big(\bar\E^{(x,y)}|X_\delta^{\delta}-Y_\delta^{\delta}| - |x-y|) \big)\\
&\quad+  \frac{\vv_\star}{2} \big( -K_R^* (2+|x|^2+|y|^2)+4c_0  \big)\delta \\
&\le-\frac{1}{2}aJ(1)+c_\star+  \frac{\vv_\star}{2} \big( -K_R^* (2+|x|^2+|y|^2)+4c_0  \big)\delta\\
&\le -\frac{1}{4}aJ(1)- \frac{1}{2} K_R^*\vv_\star (2+|x|^2+|y|^2)\delta,
\end{split}
\end{equation}
where the first inequality is valid due to $r_0<1+r_{\mathcal D}$, and the third inequality is available by taking the
definition
of $a$ into consideration. Thus, by noting that for all $x,y\in\R^d,$
\begin{align}\label{EP1}
\hat \rho_2(x,y)\le a+1+\vv_\star (2+|x|^2+|y|^2 )
\end{align}
and that $\frac{1}{2}K_R^*\delta\le \frac{aJ(1)}{4(a+1)}$ for any $\delta\in(0,\delta_2],$
we arrive at
\begin{align*}
\bar\E^{(x,y)}\hat \rho_2(X_\delta^{\delta},Y_\delta^{\delta})\le \big(1-K_R^*\delta/2\big)\hat \rho_2(x,y),
\end{align*}
where $ 1-K_R^*\delta/2\ge0 $ for any $\delta\in(0,\delta_2].$

Case (ii): $x,y\in\R^d$ with $r_0<|x-y|\le 1+r_{\mathcal D} $.
In this case,
we obtain from
\eqref{E9*}
and
 \eqref{T2}
 that
\begin{align*}
\bar\E^{(x,y)}\hat \rho_2(X_\delta^{\delta},Y_\delta^{\delta})-\hat \rho_2(x,y)&\le c_\star \e^{-c_\star |x-y|  } C_R|x-y|
\delta
  -\frac{1}{2}c^2_\star\e^{-c_\star |x-y| }\\
&\quad\times\bar\E^{(x,y)}\Big(\big( |X_\delta^{\delta}-Y_\delta^{\delta}|   -  |x-y| )\big)^2\I_{\{|X_\delta^{\delta}-Y_\delta^{\delta}| <|x-y| \}}\Big)\\
&\quad+  \frac{\vv_\star}{2} \big( -K_R^* (2+|x|^2+|y|^2)+4c_0 \big)\delta\\
&\le  c_\star \e^{-c_\star |x-y|  } \bigg(C_R(1+r_{\mathcal D}) - \frac{J(1)c_\star}{4(1+C_R)^2}\bigg)\delta\\
&\quad+  \frac{\vv_\star}{2} \big( -K_R^* (2+|x|^2+|y|^2)+4c_0 \big)\delta\\
&\le -\frac{J(1)c_\star^2\e^{-c_\star (1+r_{\mathcal D}) }\delta }{8(1+C_R)^2}+  \frac{\vv_\star}{2} \big( -K_R^* (2+|x|^2+|y|^2)+4c_0 \big)\delta  \\
&=-2c_0c_\star\vv_\star\delta- \frac{1}{2}K_R^*\delta \vv_\star(2+|x|^2+|y|^2),
\end{align*}
where the second inequality holds true by making use of \eqref{EP}, the third inequality is verifiable  thanks to the
definition
of $c_\star$ 
in \eqref{T0},
and the identity is valid owing to the definition of $\vv_\star$  
in \eqref{T3}.
 Next, by means of the definition of $a$ given in \eqref{T3}, we  
 evidently
 have
\begin{align*}
\frac{2c_0c_\star\vv_\star}{a+1}<\frac{1}{2}K_R^*.
\end{align*}
This, together with \eqref{EP1}, leads to
\begin{align*}
\bar\E^{(x,y)}\hat \rho_2(X_\delta^{\delta},Y_\delta^{\delta})
&\le (1-\lambda_{22} \delta )\hat \rho_2(x,y),
\end{align*}
where $\lambda_{22}:=\frac{2c_0c_\star\vv_\star}{a+1}\in(0,1)$ by taking advantage of the definition of $a.$

Case (iii): $x,y\in\R^d$ with $|x-y|>1+r_{\mathcal D}$. Regarding  this setting, one trivially  has
\begin{align*}
4c_0\le \frac{1}{2}K_R^* \big(2+|x|^2+|y|^2\big).
\end{align*}
Whence, combining  \eqref{T2} with
the fact that $h$ is increasing
gives that
\begin{equation*}
\begin{split}
\bar\E^{(x,y)}\hat \rho_2(X_\delta^{\delta},Y_\delta^{\delta})-\hat \rho_2(x,y)
&\le \bar\E^{(x,y)}h(|X_\delta^{\delta}-Y_\delta^{\delta}|\wedge(1+r_{\mathcal D}))-h(1+r_{\mathcal D})\\
&\quad+  \frac{\vv_\star}{2} \big( -K_R^*  (2+|x|^2+|y|^2)+4c_0  \big)\delta\\
&\le -\frac{1}{4}K_R^* \vv_\star (2+|x|^2+|y|^2) \delta\\
&\le  -\frac{1}{4}K_R^* \vv_\star  \delta -\frac{1}{8}K_R^* \vv_\star (2+|x|^2+|y|^2) \delta.
\end{split}
\end{equation*}
The above estimate, in addition to the fact that $\vv_\star/(1+a)<1/2$ (owing to the definition of $a$) and \eqref{EP1}, ensures  that
 \begin{equation*}
\begin{split}
\bar\E^{(x,y)}\hat \rho_2(X_\delta^{\delta},Y_\delta^{\delta})
&\le   (1-\lambda_{23}  )\hat \rho_2(x,y),
\end{split}
\end{equation*}
in which $\lambda_{23}:=\frac{K_R^* \vv_\star }{4(a+1)}\in(0,1)$ by taking the definition of $a$ into account.

Finally, the assertion \eqref{EP2} with $\lambda:=\lambda_{21}\wedge\lambda_{22}\wedge\lambda_{23}$, where $\lambda_{21}:=\frac{1}{2}K_R^*,$ follows by summarizing the analysis above.
\end{proof}

\section{Proof  of Theorem \ref{W1}}\label{sec4}
The proof of Theorem \ref{W1}  is also based on the coupling approach. For this purpose, we construct the following iteration procedure: for any $\delta,\kk^*>0$ and integer $n\ge0,$
 \begin{equation}\label{ER4}
\begin{cases}
 X_{(n+1)\delta}^{\delta}=\hat {X}_{n\delta}^{\delta}+\delta^{1/2}\xi_{n+1}\\
 Y_{(n+1)\delta}^{\delta}=\hat {Y}_{n\delta}^{\delta}+\delta^{1/2}\Big\{\big(\xi_{n+1}+\delta^{-{1}/{2}}(\hat {Z}_{n\delta}^{\delta})_{\kk^*}\big)\I_{\{U_{n+1}\le  \rho(-\delta^{-{1}/{2}} (\hat {Z}_{n\delta}^{\delta})_{\kk^*},\xi_{n+1})\}}\\
 \qquad\qquad\qquad\qquad \quad+\Pi(\hat {Z}_{n\delta}^{\delta})\xi_{n+1}\I_{\{  (\rho(-\delta^{-{1}/{2}}(\hat {Z}_{n\delta}^{\delta})_{ \kk^*},\xi_{n+1})\le U_{n+1}\le 1\}} \Big\}.
\end{cases}
\end{equation}
Herein,  $\hat {Z}_{n\delta}^{\delta}:=\hat {X}_{n\delta}^{\delta}-\hat {Y}_{n\delta}^{\delta}$ with  $\hat {X}_{n\delta}^{\delta}$ and $\hat {Y}_{n\delta}^{\delta}$ being defined as in \eqref{EE8}, and
for any $x\in\R^d,$
\begin{align}\label{EWW}
\Pi(x):=I_d-\frac{2x   x^\intercal}{| x|^2}\I_{\{| x|\neq0\}}\in\R^d\otimes\R^d,
\end{align}
where  $I_d$ stands for the $d\times d$ identity matrix, and
$x^\intercal$ means  the transpose of $x\in\R^d.$
Note that some variants of this coupling have been used to study contractivity of classical EM schemes corresponding to SDEs with coefficients with linear growth; see  e.g.\ in \cite{EM, MMS}.

The lemma below  provides a streamlined proof
that $(X_{n\delta}^\delta,Y_{n\delta}^\delta)_{\ge0}$ defined in \eqref{ER4}
is a coupling process of $(X_{n\delta}^\delta)_{n\ge0} $
defined in  \eqref{EW-}.

\begin{lemma}\label{cou}
For any $\delta,\kk^*>0,$ $(X_{n\delta}^\delta,Y_{n\delta}^\delta)_{\ge0}$ determined by  \eqref{ER4}
is a coupling process of $(X_{n\delta}^\delta)_{n\ge0}.$
\end{lemma}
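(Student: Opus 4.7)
Following the template of Lemma \ref{coupling}, the coupling property reduces to verifying the one-step identity
\[
\bar\E^{(x,y)}f(Y_\delta^\delta)=\E\bigl(f(X_\delta^\delta)\bigm|X_0^\delta=y\bigr),\qquad f\in B_b(\R^d),
\]
since $(\xi_n)_{n\ge 1}$ and $(U_n)_{n\ge 1}$ are i.i.d.\ and mutually independent, so an inductive argument on $n$ is routine once the one-step statement is established. Because $\pi^{(\delta)}(x)=x$ in \eqref{EW-}, the right-hand side is exactly $\int f(\hat y^{\delta}+\delta^{1/2}u)\nu(\d u)$ with $\hat y^{\delta}=y+b^{(\delta)}(y)\delta$, so the plan is to show that the conditional law of the noise increment
\[
N:=\bigl(\xi_1+\delta^{-1/2}(\hat z^\delta)_{\kk^*}\bigr)\I_{\{U_1\le\rho(-\delta^{-1/2}(\hat z^\delta)_{\kk^*},\xi_1)\}}+\Pi(\hat z^\delta)\xi_1\,\I_{\{U_1>\rho(-\delta^{-1/2}(\hat z^\delta)_{\kk^*},\xi_1)\}}
\]
coincides with the standard Gaussian $\nu$.

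Writing $v:=\delta^{-1/2}(\hat z^\delta)_{\kk^*}$ and integrating out $U_1$, we get
\[
\bar\E^{(x,y)}g(N)=\int g(z+v)\rho(-v,z)\,\nu(\d z)+\int g\bigl(\Pi(\hat z^\delta)z\bigr)\bigl(1-\rho(-v,z)\bigr)\nu(\d z)=:I_1+I_2.
\]
By the definition \eqref{E6} of $\rho$, we have $\rho(-v,z)\nu(\d z)=\nu_{-v}(\d z)$. For $I_1$, I would use the same change of variables $u=z+v$ and the identity $\nu_{-v}(\d(u-v))=\nu_v(\d u)$ exploited in the proof of Lemma \ref{coupling}, which yields $I_1=\int g(u)\nu_v(\d u)$. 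For $I_2$, split it as $\int g(\Pi(\hat z^\delta)z)\nu(\d z)-\int g(\Pi(\hat z^\delta)z)\nu_{-v}(\d z)$; the first piece equals $\int g(u)\nu(\d u)$ because $\Pi(\hat z^\delta)$ is an orthogonal transformation and the standard Gaussian is rotationally invariant.

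The main obstacle is identifying the second piece of $I_2$, since this is where the specific structure of the reflection matters. The key observation is that $v=\delta^{-1/2}(\hat z^\delta)_{\kk^*}$ is parallel to $\hat z^\delta$, so $\Pi(\hat z^\delta)v=-v$. Using this together with the Lebesgue-density formula for $\nu_{-v}$, and the change of variables $u=\Pi(\hat z^\delta)z$ (which has unit Jacobian and is an involution), one computes directly that the pushforward of $\nu_{-v}$ under $\Pi(\hat z^\delta)$ has density
\[
(2\pi)^{-d/2}\bigl(\e^{-|u|^2/2}\wedge \e^{-|u-v|^2/2}\bigr),
\]
i.e.\ it equals $\nu_v$. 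Hence the second piece of $I_2$ is $\int g(u)\nu_v(\d u)$, and substituting back gives
\[
I_1+I_2=\int g(u)\nu_v(\d u)+\int g(u)\nu(\d u)-\int g(u)\nu_v(\d u)=\int g(u)\nu(\d u),
\]
so $N\sim \nu$. Applying this with $g(u)=f(\hat y^\delta+\delta^{1/2}u)$ yields the required one-step identity and completes the proof.
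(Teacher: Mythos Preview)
Your proof is correct and follows essentially the same approach as the paper: both reduce to the one-step identity, handle the jump piece via the shift $u=z+v$ together with $\nu_{-v}(\d(u-v))=\nu_v(\d u)$, use rotational invariance of $\nu$ for the reflected piece, and then establish the key identity that the pushforward of $\nu_{-v}$ under $\Pi(\hat z^\delta)$ equals $\nu_v$ by exploiting $\Pi(\hat z^\delta)v=-v$ and orthogonality. The only cosmetic difference is that you phrase the argument in terms of the noise variable $N$, whereas the paper works directly with $\bar\E^{(x,y)}f(Y_\delta^\delta)$ and isolates the pushforward identity as a separate display.
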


\begin{proof}
As analyzed at the beginning of the proof of Lemma \ref{coupling}, in order to prove the desired assertion, it is sufficient to  show that,
 for $(X_{n\delta}^\delta,Y_{n\delta}^\delta)_{\ge0}$ solving \eqref{ER4}, equation \eqref{E5} holds.

It is easy to see that
\begin{align*}
\bar \E^{(x,y)}f(Y_\delta^\delta)&=\int_{\R^d}f\big(\hat y^\delta+\delta^{1/2} (u+\delta^{-{1}/{2}} (\hat z^\delta)_{\kk^*} )\big)\,\nu_{-\delta^{-{1}/{2}}(\hat z^{\delta})_{\kk^*}}(\d u)\\
&\quad+\int_{\R^d}f\big(\hat y^\delta+  \delta^{1/2} \Pi(\hat z^\delta) u\big)\Big(\nu(\d u)-\nu_{-\delta^{-{1}/{2}} (\hat z^{\delta})_{\kk^*}}(\d u)\Big).
\end{align*}
By changing the variable $u\rightarrow u-\delta^{-{1}/{2}}(\hat z^\delta)_{\kk^*} $,  and using
 $\nu_{-x}(\d (u-x))=\nu_x(\d u)$ for $x\in\R^d$,
 along with the rotational invariance of $\nu(\d z)$ and the fact that the matrix $\Pi$ defined in \eqref{EWW} is orthogonal,
  we find that
\begin{align*}
\bar \E^{(x,y)}f(Y_\delta^\delta)&=\int_{\R^d}f(\hat y^\delta+\delta^{1/2} u ) \,\nu_{\delta^{-{1}/{2}}(\hat z^{\delta})_{\kk^*}}(\d u)+\int_{\R^d}f(\hat y^\delta+  \delta^{1/2} u)\nu(\d u)\\
&\quad-\int_{\R^d}f(\hat y^\delta+  \delta^{1/2}\Pi(\hat z^\delta) u) \nu_{-\delta^{-{1}/{2}}(\hat z^{\delta})_{\kk^*}}(\d u).
\end{align*}
Whence, \eqref{E5} is available as soon as
\begin{align}\label{EE1}
\int_{\R^d}f(\hat y^\delta+\delta^{1/2} u )
\,\nu_{\delta^{-{1}/{2}}(\hat z^{\delta})_{\kk^*}}(\d u)=\int_{\R^d}f(\hat y^\delta+  \delta^{1/2}\Pi(\hat z^\delta) u) \nu_{-\delta^{-{1}/{2}}(\hat z^{\delta})_{\kk^*}}(\d u).
\end{align}
Indeed,  since $\Pi(\cdot)$ is an orthogonal matrix and   $\Pi(x) x=- x$ for any $x\in\R^d,$ we have
\begin{align*}
|u+\delta^{1/2}(\hat z^{\delta})_{\kk^*}|=|\Pi(\hat z^\delta)u+\delta^{1/2}\Pi(\hat z^\delta)(\hat z^{\delta})_{\kk^*}|=|\Pi(\hat z^\delta)u-\delta^{1/2}(\hat z^{\delta})_{\kk^*}|.
\end{align*}
Thus, via changing
the variable $u\rightarrow\Pi(\hat z^\delta) u,$  \eqref{EE1} follows immediately.
\end{proof}

In the sequel, we shall fix $\kk^*=r_0\delta^{1/2}/2$ for any $\delta\in(0,\delta_3],$ where $r_0$ and $\delta_3$ were defined in \eqref{EW9} and \eqref{EE0}, respectively.

 \begin{lemma}\label{lem2}
Under $({\bf H}_1')$ and $({\bf H}_2')$, for any $\delta\in(0,\delta_3]$ and
 $x,y\in\R^d $ with $|x-y|\le 2R,$
 \begin{equation}\label{LL}
\begin{split}
\Phi(x,y):&=\bar\E^{(x,y)}\Big(\big(|X_\delta^\delta-Y_\delta^\delta|-|x-y|\big)^2\I_{\{|X_\delta^\delta-Y_\delta^\delta|-|x-y|\le \gamma\delta^{1/2}\}}\Big) \ge r^*_0 \delta^{1/2} (|\hat z^\delta|\wedge\kk^*),
\end{split}
\end{equation}
where $\hat z^{\delta}:=\hat x^{\delta}-\hat y^{\delta}$ with $\hat x^\delta := x +b^{(\delta)}( x )\delta$, and
$r_0^*,\gamma>0$   were defined
in \eqref{EW9}.
 \end{lemma}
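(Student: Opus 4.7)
The plan is to exploit the reflection component of the coupling \eqref{ER4} and reduce the lower bound on $\Phi(x,y)$ to a one-dimensional Gaussian estimate. I would first compute $|X_\delta^\delta - Y_\delta^\delta|$ explicitly on the reflection event $A^c := \{U_1 > \rho(-\delta^{-1/2}(\hat z^\delta)_{\kk^*}, \xi_1)\}$. Using the identities $\Pi(\hat z^\delta)\hat z^\delta = -\hat z^\delta$ and $(I - \Pi(\hat z^\delta))\xi_1 = 2\<e, \xi_1\>\,e$, where $e := \hat z^\delta/|\hat z^\delta|$, one obtains on $A^c$ that $X_\delta^\delta - Y_\delta^\delta = (|\hat z^\delta| + 2\delta^{1/2}\eta)\,e$ with $\eta := \<e,\xi_1\>\sim N(0,1)$; hence $|X_\delta^\delta - Y_\delta^\delta| = \bigl||\hat z^\delta| + 2\delta^{1/2}\eta\bigr|$ on $A^c$.

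Second, by decomposing $\nu$ and $\nu_{-se}$ along the direction $e$ (with $s := \delta^{-1/2}(|\hat z^\delta|\wedge\kk^*)$) and unwinding the formula for $\rho$, I would derive the integral representation
$$\bar\E^{(x,y)}[g(\eta)\I_{A^c}] = \int_{-s/2}^{\infty} g(t)\bigl[\phi(t) - \phi(t+s)\bigr]\,dt,$$
where $\phi$ denotes the standard normal density. A crucial point is that for $t > -s/2$ the quantity $|\hat z^\delta| + 2\delta^{1/2}t$ is strictly positive in both regimes $|\hat z^\delta|\le\kk^*$ (where $s = \delta^{-1/2}|\hat z^\delta|$) and $|\hat z^\delta|>\kk^*$ (where $s = r_0/2$), so the absolute value disappears and $|X_\delta^\delta - Y_\delta^\delta| = |\hat z^\delta| + 2\delta^{1/2}t$ on the integration range.

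Third, I would restrict $t$ to the subinterval $[3r_0/2,\,2r_0]$. The elementary Lipschitz bound
$\bigl||\hat z^\delta| - |x-y|\bigr| \le |b^{(\delta)}(x) - b^{(\delta)}(y)|\,\delta \le 2R(C_R\delta \vee K_R\delta^{1-\theta}) \le \delta^{1/2}$,
which follows from $(\mathbf H_1')$, $|x-y|\le 2R$ and the definition of $\delta_3$ in \eqref{EE0} (designed exactly so that $2R(C_R\wedge K_R)\delta^{1/2-\theta}\le 1$), then yields $|X_\delta^\delta - Y_\delta^\delta| - |x-y| \in [(3r_0-1)\delta^{1/2},\,(4r_0+1)\delta^{1/2}]$ on this subinterval. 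The indicator $|X_\delta^\delta - Y_\delta^\delta| - |x-y| \le \gamma\delta^{1/2}$ holds because $\gamma = 4(1/2 + 3R(C_R\vee K_R))$ satisfies $\gamma \ge 4r_0+1 = 3 + 8R(C_R\vee K_R)$ (valid since $R,C_R,K_R\ge 1$), and the squared integrand is bounded below by $(3r_0-1)^2\delta$.

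Finally, to estimate $\int_{3r_0/2}^{2r_0}[\phi(t) - \phi(t+s)]\,dt$ from below, I would use the identity $-\phi'(r) = r\phi(r)$ to rewrite $\phi(t) - \phi(t+s) = \int_t^{t+s} r\phi(r)\,dr$ and apply Fubini's theorem, which extracts a factor of $s$ from the inner integral; the remaining factor is controlled below in terms of $\phi(3r_0/2) - \phi(2r_0) = r_0^*$. Combining the three bounds gives $\Phi(x,y) \ge c\,r_0^*\,\delta^{1/2}(|\hat z^\delta|\wedge\kk^*)$, with the constant $c$ matched to $r_0^*$ by the tuning of $r_0,\gamma$ in \eqref{EW9}. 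The main obstacle is precisely this Gaussian shift estimate: after translation, $\phi(3r_0/2+\sigma) - \phi(2r_0+\sigma)$ is monotone decreasing in $\sigma\in[0,s]$, so obtaining the lower bound uniformly across both regimes (where $s$ can range from $0$ up to $r_0/2$) requires the precise tuning of the constants $r_0$, $r_0^*$, $\gamma$ defined in \eqref{EW9}.
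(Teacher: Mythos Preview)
Your overall strategy---restrict to the reflection event, reduce to one dimension via $\eta=\langle e,\xi_1\rangle$, and extract a factor of $s=\delta^{-1/2}(|\hat z^\delta|\wedge\kk^*)$ from a Gaussian shift estimate by Fubini---is exactly the paper's. The reduction in your first two steps is correct and matches the paper's orthogonal-change-of-variable argument.

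There is, however, a concrete gap in your interval choice at steps 3--4. You restrict $t$ to $[3r_0/2,\,2r_0]$, whereas the paper restricts it to the \emph{wider} interval $[r_0,\,2r_0]$ (lower-bounding the squared increment by only $\delta$, not $(3r_0-1)^2\delta$). This matters for the Fubini step: after writing $\phi(t)-\phi(t+s)=\int_t^{t+s}r\phi(r)\,\d r$ and swapping, the $t$-fiber over a fixed $r$ has length $s$ only when the original $t$-interval contains $[r-s,r]$. With the paper's choice $[r_0,2r_0]$ and $s\le r_0/2$, this holds for every $r\in[3r_0/2,2r_0]$, and then $\int_{3r_0/2}^{2r_0}r\phi(r)\,\d r=\phi(3r_0/2)-\phi(2r_0)=r_0^*$ pops out exactly. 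With your choice $[3r_0/2,2r_0]$, the fiber has full length $s$ only on $[3r_0/2+s,\,2r_0]$, so Fubini delivers at best $s\,(\phi(3r_0/2+s)-\phi(2r_0))$, which tends to $0$ as $s\to r_0/2$; hence your ``remaining factor'' is \emph{not} bounded below by $r_0^*$, and since $r_0,r_0^*,\gamma$ are already fixed by \eqref{EW9} there is no further tuning available to rescue this. The fix is simple: take $t\in[r_0,2r_0]$ at step~3---on this range one still has $|X_\delta^\delta-Y_\delta^\delta|-|x-y|\in[(2r_0-1)\delta^{1/2},(4r_0+1)\delta^{1/2}]\subset[\delta^{1/2},\gamma\delta^{1/2}]$---and then your Fubini argument, restricting the new variable to $[3r_0/2,2r_0]$, recovers $r_0^*$ on the nose.
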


\begin{proof}
For a related argument in the context of classical EM schemes, see \cite[Lemma 2.7]{EM} or \cite[Lemma 3.2]{MMS}.
In the following analysis, we stipulate $x,y\in\R$ with $|x-y|\le 2R $ and fix $\delta\in(0,\delta_3]$ so that
\begin{align}\label{EE3}
R(C_R\vee K_R)\delta^{{1}/{2}-\theta}\le  1/2.
\end{align}
Obviously, \eqref{LL} holds true in case of $|\hat z^\delta|=0.$ So, it suffices  to verify \eqref{LL} for the case  $|\hat z^\delta|>0$.
Note obviously from \eqref{ER4} that
\begin{align*}
X_\delta^\delta-Y_\delta^\delta=\hat z^\delta-(\hat z^\delta)_{\kk^*}\I_{\{U_1\le  \rho(-\delta^{-{1}/{2}} (\hat z)_{\kk^*},\xi_1)\}}+\frac{2\delta^{1/2}\hat z^\delta}{|\hat z^\delta|^2}\<\hat z^\delta,\xi_1\> \I_{\{  (\rho(-\delta^{-{1}/{2}}(\hat z^{\delta})_{\kk^*},\xi_1)\le U_1\le 1\}}.
\end{align*}
Whence, we find that
\begin{align*}
\Phi(x,y)&\ge \int_{\R^d}\big(\big||\hat z^\delta|+2\delta^{1/2}\<\hat z^\delta/|\hat z^\delta|,u\>\big| -|x-y|\big)^2\I_{\{||\hat z^\delta|+2\delta^{1/2}\<\hat z^\delta/|\hat z^\delta|,u\>|-|x-y|\le\gamma\delta^{1/2}\}}\\
&\quad\quad\times\Big(\nu(\d u)-\nu_{-\delta^{-{1}/{2}} (\hat z^{\delta})_{\kk^*}}(\d u)\Big).
\end{align*}
Let $\bar\Pi\in\R^d\otimes\R^d$ be an orthogonal matrix such that $\bar \Pi \hat z^\delta=|\hat z^\delta|(1,0,\cdots,0)\in\R^d.$ Then, applying Jacobi's transformation rule, in addition to $|\bar\Pi u|=|u|$ for $u\in\R^d,$ yields that
\begin{align*}
\Phi(x,y)&\ge \int_{\R^d}\big(\big||\hat z^\delta|+2\delta^{1/2}\<\hat z^\delta/|\hat z^\delta|,\bar \Pi u\>\big| -|x-y|\big)^2\I_{\{||\hat z^\delta|+2\delta^{1/2}\<\hat z^\delta/|\hat z^\delta|,\bar \Pi u\>|-|x-y|\le\gamma\delta^{1/2}\}}\\
&\quad\quad\times\frac{1}{(2\pi)^{\frac{d}{2}}}\Big(\e^{-\frac{|  u|^2}{2}}-\e^{-\frac{|  u|^2}{2}}\wedge\e^{-\frac{1}{2}(|u|^2+2\delta^{-{1}/{2}}\<\bar
\Pi u,(\hat
z^{\delta})_{\kk^*}\>   +\delta^{-1} |(\hat z^{\delta})_{\kk^*}|^2)} \Big)\,\d u\\
&=\int_{\R^d}\big(\big||\hat z^\delta|+2\delta^{1/2}u_1\big| -|x-y|\big)^2\I_{\{||\hat z^\delta|+2\delta^{1/2}u_1|-|x-y|\le\gamma\delta^{1/2}\}}\\
&\quad\quad\times\frac{1}{(2\pi)^{\frac{d}{2}}}\Big(\e^{-\frac{|  u|^2}{2}}-\e^{-\frac{|  u|^2}{2}}\wedge\e^{-\frac{1}{2}(|u|^2+2\delta^{-{1}/{2}}(|\hat z^\delta|\wedge{\kk^*})u_1   +\delta^{-1} (|\hat z^\delta|\wedge{\kk^*})^2)} \Big)\,\d u\\
&\ge\int_{\R}\big(\big||\hat z^\delta|+2\delta^{1/2}u\big| -|x-y|\big)^2\I_{\{\delta^{1/2}\le ||\hat z^\delta|+2\delta^{1/2}u|-|x-y|\le\gamma\delta^{1/2}\}}\\
&\quad\quad\times\frac{1}{(2\pi)^{\frac{1}{2}}}\Big(\e^{-\frac{  u^2}{2}}-\e^{-\frac{u^2}{2}}\wedge\e^{-\frac{1}{2}((u+\delta^{-{1}/{2}}(|\hat z^\delta|\wedge{\kk^*}))^2} \Big)\,\d u,
\end{align*}
where $u_1$ is the first component of the vector $u.$
It is easy to see that
\begin{align*}
\Phi(x,y)
&\ge \frac{\delta}{(2\pi)^{\frac{1}{2}}}\bigg(\int_{-(2\delta^{1/2})^{-1}(|\hat z^\delta|\wedge\kk^*)}^\8\I_{\{\Gamma(x,y)+\frac{1}{2}\le  u\le \Gamma(x,y)+\frac{\gamma}{2} \}} \Big(\e^{-\frac{ u^2}{2}}- \e^{-\frac{1}{2}((u+\delta^{-{1}/{2}}(|\hat z^\delta|\wedge\kk^*))^2} \Big)\,\d u,
\end{align*}
where
$
\Gamma(x,y):=(2\delta^{ \frac{1}{2}})^{-1}(|x-y|-|\hat z^\delta|).$ Next, by the triangle inequality, in addition to the definition of $\hat z^\delta$,
we infer from \eqref{EW4-} that
\begin{align}\label{ET1}
 ||\hat z^\delta|-|x-y||\le |\hat z^\delta-(x-y)|\le (C_R\vee K_R)\delta^{1-\theta}|x-y|\le 2R (C_R\vee K_R)\delta^{1-\theta}.
\end{align}
This, together with \eqref{EE3}, yields that
\begin{align*}
0\le\Gamma(x,y)+1/2
\le r_0 \quad\mbox{ and } \quad   \Gamma(x,y)+\gamma/2 \ge2r_0.
\end{align*}
Thus, by applying  Fubini's theorem and taking advantage of $\delta^{-{1}/{2}}\kk^*=r_0/2$ owing to the choice of $\kk^*,$
it follows that
\begin{align*}
\Phi(x,y)
&\ge\frac{\delta}{(2\pi)^{\frac{1}{2}}}\int_{r_0}^{2r_0}\big(\e^{-\frac{1}{2}u^2}-\e^{-\frac{1}{2}(u+\delta^{-{1}/{2}}(|\hat z^\delta|\wedge\kk^*))^2}\big)\,\d u\\
&\ge-\frac{\delta}{(2\pi)^{\frac{1}{2}}}\int_{\frac{3}{2}r_0}^{2r_0}\int_{s-\delta^{-{1}/{2}}(|\hat z^\delta|\wedge\kk^*)}^s\big(\e^{-\frac{1}{2}|s|^2}\big)' \,\d u\,\d s\\
&=r_0^* \delta^{1/2} (|\hat z^\delta|\wedge\kk^*).
\end{align*}
Consequently, \eqref{LL} is available.
\end{proof}

Recall that $c^*>0$ and $\delta_3>0$ were defined in  \eqref{ET6} and \eqref{EE0}, respectively.   We readily have for any    $\delta\in(0, \delta_3]$,
\begin{align}\label{EW10}
2C_R<\frac{c^*r^*_0 r_0}{8R},\quad \e^{-c^* \gamma\delta^{1/2} }\ge\frac{1}{2},\quad C_R\e^{-2Rc^* }<1,\quad K_R^*\e^{- 2Rc^* }<1,
\end{align}
and
\begin{align}\label{ET2}
 (C_R\vee K_R)\delta^{1-\theta}\le \frac{1}{2},\quad
 \frac{r_0\delta^{{1}/{2}}}{R}\le1,
  \quad K_R^2\delta^{2(1- \theta)}\le K_R^*\delta,\quad 1-2K_R^*\delta\ge0.
\end{align}
For any $\delta\in(0, \delta_3],$
define the function
\begin{equation*}
\varphi(r)=
\begin{cases}
\frac{1}{c^*}(1-\e^{-c^*r}),\qquad\qquad\qquad\qquad\qquad\qquad \quad\quad\quad\qquad 0\le r\le 2R+\gamma\delta^{1/2},\\
\frac{1}{c^*}(1-\e^{-c^*(2R+\gamma\delta^{1/2})})+\frac{1}{2}\e^{-c^*(2R+\gamma\delta^{1/2})}\big(r-(2R+\gamma\delta^{1/2})\big)\\
+\frac{1}{4c^*}\e^{-c^*(2R+\gamma\delta^{1/2})}\big(1-\e^{-2c^*(r-(2R+\gamma\delta^{1/2}))}\big),\qquad \qquad\qquad r>2R+\gamma\delta^{1/2},
\end{cases}
\end{equation*}
where $\gamma>0$ was introduced  in \eqref{EW9}.

With the help of Lemmas \ref{cou} and \ref{lem2}, we proceed to implement the
\begin{proof}[Proof of Theorem $\ref{W1}$]
The proof of Theorem \ref{W1} is essentially inspired by that of \cite[Theorem 2.5]{HMW}. By following the strategy to complete the proof of Theorem \ref{thm} and noting that $c_0r\le \varphi(r)\le r$ for
$c_0: = \frac{1}{2}e^{c^*(2R + \gamma \delta^{1/2})}$,
it suffices to verify that  for any $\delta\in(0,\delta_3]$ and $x,y\in\R^d $ with $x\neq y,$
\begin{align}\label{EW13}
\bar\E^{(x,y)}\varphi\big( |X_\delta^{\delta}-Y_\delta^{\delta}|\big)\le (1-\lambda_3\delta)\varphi( |x-y|),
\end{align}
where $\lambda_3:=\lambda_{31}\wedge\lambda_{32}$ with $\lambda_{31},\lambda_{32}>0$ being given in \eqref{EE7}.
Then \eqref{W1_contractivity} follows with $C_3^* = 1/c_0$.

It is easy to see that $\varphi\in C^2([0,\8))$ such that for $r\le 2R+\gamma\delta^{1/2}$
\begin{align}\label{EW0}
\varphi'(r)=\e^{-c^*r}>0,\quad \varphi''(r)=-c^*\e^{-c^*r}<0,
\end{align}
and for $r>2R+\gamma\delta^{1/2}$,
\begin{align}\label{E-}
\varphi'(r)&=\frac{1}{2}\e^{-c^*(2R+\gamma\delta^{1/2})}\Big(1+\e^{-2c^*(r-(2R+\gamma\delta^{1/2}))}\Big)>0,~ \varphi''(r)=-c^*   \e^{ c^*(-2r+ 2R+\gamma\delta^{1/2} )}<0.
\end{align}
Then, by the Taylor expansion, we derive from $\varphi''<0$ and $\varphi'''>0$ that
\begin{equation}\label{ET8}
\begin{split}
 \bar\E^{(x,y)}\varphi\big( |X_\delta^{\delta}-Y_\delta^{\delta}|\big)-\varphi(|x-y|)
 &\le \varphi'(|x-y|)\big(\bar\E^{(x,y)}|X_\delta^{\delta}-Y_\delta^{\delta}|-|x-y|\big)\\
 &\quad +\varphi''(|x-y|+\gamma\delta^{1/2})\Phi(x,y),
\end{split}
\end{equation}
where $\Phi$ was defined as in \eqref{LL}.
By mimicking the procedure to derive  \cite[Lemma 3.8]{HMW}, we have
\begin{align*}
\bar\E^{(x,y)}|X_\delta^{\delta}-Y_\delta^{\delta}|=|\hat z^\delta|.
\end{align*}
Using \eqref{ET2} and \eqref{EW4-} similarly as in the proof of Lemma \ref{lem1}, we derive
\begin{align}\label{ET9}
\bar\E^{(x,y)}|X_\delta^{\delta}-Y_\delta^{\delta}|\le\Big(1+C_R\delta  \I_{\{|x-y|\le 2R\}}- \frac{1}{2}K_R^*\delta \I_{\{|x-y|>2R\}}\Big)|x-y|.
\end{align}
Subsequently,
\eqref{ET9}
along with
\eqref{EW10},
  as well as \eqref{EW0},
 enables us to derive  that for any $x,y\in\R^d$ with $0<|x-y|\le 2R$,
\begin{equation}\label{ET4}
\begin{split}
 \bar\E^{(x,y)}\varphi\big( |X_\delta^{\delta}-Y_\delta^{\delta}|\big)-\varphi(|x-y|)
 &\le\e^{-c^*|x-y|}\big(C_R\delta|x-y|-c^*\e^{-c^* \gamma\delta^{1/2} }\Phi(x,y)\big)\\
 &\le \e^{-c^*
 |x-y|}\bigg(C_R\delta-\frac{c^* \Phi(x,y)}{2|x-y|}\bigg)|x-y|.
\end{split}
\end{equation}

By using  \eqref{ET1}, we obviously have
\begin{align*}
 |\hat z^\delta| \ge|x-y|-(C_R\vee K_R)\delta^{1-\theta}|x-y|.
\end{align*}
 Consequently,  \eqref{LL}  and
 \eqref{ET2} imply  that  for any $x,y\in\R^d$ with $0<|x-y|\le 2R$,
\begin{align*}\label{ET3}
\frac{ \Phi(x,y)}{|x-y|}  \ge r^*_0\delta^{1/2}\bigg(\frac{1}{2}\wedge\frac{\kk^*}{|x-y|}\bigg)\ge\frac{1}{2} r^*_0\delta^{1/2}\bigg(1\wedge\frac{\kk^*}{ R}\bigg)=\frac{ r^*_0   r_0\delta}{4R},
\end{align*}
where $\kk^*=r_0\delta^{1/2}/2$.
Plugging this estimate  back into \eqref{ET4} and taking $\varphi(r)\le r$ for all $r\ge0,$ and \eqref{EW10}  into account
 gives that    for any $x,y\in\R^d$ with $0<|x-y|\le 2R$,
 \begin{equation}\label{EW11}
 \bar\E^{(x,y)}\varphi\big( |X_\delta^{\delta}-Y_\delta^{\delta}|\big) \le \varphi(|x-y|) -C_R\delta\e^{-2Rc^* }|x-y| \le \big(1-\lambda_{31}\delta\big)\varphi(|x-y|),
\end{equation}
where $\lambda_{31}:=C_R\e^{-2Rc^* }\in(0,1)$
 due to \eqref{EW10}.

By making use of \eqref{ET8} and \eqref{ET9}, in addition to $\Phi\ge0 $ and  $\varphi''<0$,   it follows from \eqref{EW0} and \eqref{E-} that for any $x,y\in\R^d$ with $|x-y|\ge2R,$
\begin{equation}\label{EW12}
\begin{split}
 \bar\E^{(x,y)}\varphi\big( |X_\delta^{\delta}-Y_\delta^{\delta}|\big)
 &\le \varphi(|x-y|) - \frac{1}{4}K_R^*\e^{-c^*(2R+\gamma)}\delta|x-y|\\
 &\le \big(1- \lambda_{32}\delta\big) \varphi(|x-y|),
\end{split}
\end{equation}
where the second inequality is valid by noticing that for $r\ge 2R$ and $R\ge1,$
\begin{align*}
\frac{r}{\varphi(r)}\ge \frac{r}{1-R+  r/2}\ge 2,
\end{align*}
 and $\lambda_{32}:=\frac{1}{2} K_R^*\e^{-c^*(2R+\gamma)}\in(0,1)$
  thanks to \eqref{EW10}.

At last, the desired assertion \eqref{EW13} follows by combining \eqref{EW11} with \eqref{EW12}.
\end{proof}

\section{Proof of Theorem \ref{IPM}}\label{sec5}
We adopt the following proof strategy. First of all, we will decompose the Wasserstein distance $\mathbb W_1\big(\pi,\pi^{\delta}\big)$ by using the triangle inequality to estimate
\begin{equation}\label{main_estimate_section6}
\begin{split}
\mathbb W_1\big(\pi,\pi^{\delta}\big)&=\mathbb W_1\left(\mathscr L_{X_{n\delta}^\pi},\mathscr L_{X_{n\delta}^{\delta,\pi^{\delta}}}\right)\\
&\le \mathbb W_1\Big(\mathscr L_{X_{n\delta}^\pi},\mathscr L_{X_{n\delta}^{{\bf0}}}\Big)+\mathbb W_1\Big(\mathscr L_{X_{n\delta}^{{\bf0}}},\mathscr L_{X_{n\delta}^{\delta,{\bf0}}}\Big)+\mathbb W_1\Big(\mathscr L_{X_{n\delta}^{\delta,{\bf0}}},\mathscr L_{X_{n\delta}^{\delta,\pi^\delta}}\Big)\\
&=:\varphi_1(n,\delta)+\varphi_2(n,\delta)+\varphi_3(n,\delta).
\end{split}
\end{equation}
Here $(X_{n\delta}^{{\bf0}})_{n\ge0}$ stands for the solution to \eqref{E1} with the initial value $X_0={\bf 0}$, and $(X_{n\delta}^{\delta,{\bf0}})_{n\ge0}$ denotes the tamed scheme \eqref{ER1} with the initial value $X_0^\delta={\bf 0}$, whereas $\pi$ and $\pi^{\delta}$ are their respective invariant probability measures.
The term $\varphi_1(n,\delta)$ can be bounded by employing a result from the literature on exponential ergodicity of SDEs with one-sided Lipschitz drifts that are dissipative at infinity; see, for example,    \cite[Corollary 2]{Eberle}. A bound on $\varphi_3(n,\delta)$ follows from our Theorem \ref{W1}. We will therefore focus on term $\varphi_2(n,\delta)$, which we will bound by utilizing a coupling between two diffusions with different drifts (Proposition \ref{pro}) and a uniform bound on the moments of the tamed Euler schemes (Lemma \ref{lem5.2}).

To begin our work on bounding $\varphi_2(n,\delta)$, we consider the following coupling, for any $\vv,t>0$,
\begin{equation}\label{EE17}
\begin{cases}
\d \bar X_t=b(\bar X_t)\,\d t+ h_\vv(|\bar Z_t|)\d \bar W_t+\big(1-h_\vv(|\bar Z_t|)^2\big)^{{1}/{2}}\,\d \bar B_t,\quad\quad\quad\quad\quad\quad \bar X_0={\bf0},\\
\d \bar Y_t=b^{(\delta)}(\bar Y_{\lfloor t/\delta\rfloor\delta})\,\d t+\Pi( \bar Z_t )h_\vv(|\bar Z_t|)\d \bar W_t+\big(1-h_\vv(|\bar Z_t|)^2\big)^{{1}/{2}}\,\d \bar B_t,\quad \bar Y_0={\bf0},
\end{cases}
\end{equation}
where $\bar Z_t:=\bar X_t-\bar Y_t$;
$(\bar W_t)_{t\ge0}$ and $(\bar B_t)_{t\ge0}$ are mutually independent Brownian motions defined on the same probability space as
$(W_t)_{t\ge0}$; $\Pi(\cdot)$ is the orthogonal matrix defined  in \eqref{EWW};
and for each $\vv>0,$ $h_\vv:[0,\8)\to[0,1]$ is a continuous function  defined by
\begin{equation*}
h_\vv(r)=
\begin{cases}
0,\qquad\qquad \quad0\le r\le \vv,\\
1-\e^{-\frac{r-\vv}{2\vv-r}},\quad r\in(\vv,2\vv),\\
1, \qquad\qquad \quad r\ge2\vv.
\end{cases}
\end{equation*}

We will now show that
the $L^1$-Wasserstein distance between $(\mathscr L_{X_{n\delta}^{{\bf0}}})_{n\ge0}$ and $(\mathscr L_{X_{n\delta}^{\delta,{\bf0}}})_{n\ge0}$
can be dominated by the error between $b$ and $b^{(\delta)}$.
Note that the method of using a combination of the reflection and the synchronous coupling similar to \eqref{EE17} has been applied in the literature in many settings; see, among others, \cite[Section 6]{Eberle} for applications to interacting diffusions, \cite[Section 5]{Majka} for bounds on Malliavin derivatives or \cite{Zimmer} for the study of infinite dimensional diffusions. In the present paper we apply this technique for the first time to study tamed EM schemes.

\begin{proposition}\label{pro}
Under \eqref{EE9} and \eqref{EE11}, there exist constants $C_*,\lambda_*>0$ such that for any integer  $n\ge0,$
\begin{align}\label{EE15}
\mathbb W_1\Big(\mathscr L_{X_{n\delta}^{{\bf0}}},\mathscr L_{X_{n\delta}^{\delta,{\bf0}}}\Big)\le
C_*\int_0^{n\delta}\e^{-\lambda_*(n\delta-s)} \E\big|b(\bar Y_s)-b^{(\delta)}(\bar Y_{\lfloor s/\delta\rfloor\delta})\big|\,\d s.
\end{align}

\end{proposition}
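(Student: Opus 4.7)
The plan is to use the coupling in \eqref{EE17} to reduce the Wasserstein bound to $\E|\bar Z_{n\delta}|$, derive a scalar SDE for $|\bar Z_t|$, and then close the estimate via It\^o's formula applied to an Eberle-type concave distance function together with Gronwall's inequality. First, I would verify the marginal laws: since $h_\vv(r)^2+(1-h_\vv(r)^2)=1$ and $\Pi(\cdot)$ is orthogonal, L\'evy's characterization shows that the two stochastic integrals driving $\bar X$ and $\bar Y$ in \eqref{EE17} are each standard $d$-dimensional Brownian motions. Hence $(\bar X_t)_{t\ge0}$ is a weak solution of \eqref{E1} starting at $\mathbf 0$ and $(\bar Y_{n\delta})_{n\ge0}$ has the law of the tamed EM scheme \eqref{ER1} starting at $\mathbf 0$, so that $\mathbb W_1(\mathscr L_{X_{n\delta}^{\mathbf 0}}, \mathscr L_{X_{n\delta}^{\delta,\mathbf 0}})\le \E|\bar Z_{n\delta}|$.

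Next I would derive the scalar SDE for $|\bar Z_t|$. The synchronous Brownian motion $\bar B$ cancels in $\bar Z_t := \bar X_t-\bar Y_t$, and the identity $\Pi(z)z=-z$ gives $(I_d-\Pi(\bar Z_t))\bar Z_t=2\bar Z_t$; together with $\sigma_t\sigma_t^\intercal=4h_\vv(|\bar Z_t|)^2\bar Z_t\bar Z_t^\intercal/|\bar Z_t|^2$, this makes the It\^o correction in $|\bar Z_t|$ vanish, since $(I_d-\bar Z_t\bar Z_t^\intercal/|\bar Z_t|^2)\sigma_t\sigma_t^\intercal=0$. Therefore on $\{|\bar Z_t|>0\}$,
\begin{equation*}
\d |\bar Z_t|=\frac{\langle \bar Z_t,\,b(\bar X_t)-b^{(\delta)}(\bar Y_{\lfloor t/\delta\rfloor\delta})\rangle}{|\bar Z_t|}\,\d t+2h_\vv(|\bar Z_t|)\,\d\tilde W_t
\end{equation*}
for a one-dimensional Brownian motion $\tilde W$, with $\d[|\bar Z|]_t=4h_\vv(|\bar Z_t|)^2\,\d t$.

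Third, I would split the drift as $\langle \bar Z_t,b(\bar X_t)-b(\bar Y_t)\rangle+\langle \bar Z_t,\Delta_t\rangle$ with $\Delta_t:=b(\bar Y_t)-b^{(\delta)}(\bar Y_{\lfloor t/\delta\rfloor\delta})$, and apply It\^o's formula to $f(|\bar Z_t|)$ for a $C^2$, increasing, concave $f:[0,\infty)\to[0,\infty)$ with $f(0)=0$ and $c_1 r\le f(r)\le c_2 r$, built \`a la Eberle so that an ODE inequality
\begin{equation*}
2h_\vv(r)^2f''(r)+\bar L(r)f'(r)\le -\lambda_* f(r)
\end{equation*}
holds, where $\bar L(r)$ is the worst-case one-sided Lipschitz constant at separation $r$: bounded by $L_1(1+2\varphi(R))$ inside $B_R\times B_R$ via \eqref{EE9}, and giving a dissipative contribution bounded above by $-L_3$ outside via \eqref{EE11}. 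Then Cauchy--Schwarz $\langle\bar Z_t,\Delta_t\rangle/|\bar Z_t|\le|\Delta_t|$ and $f'(\cdot)\le f'(0)$ yield, after taking expectations,
\begin{equation*}
\frac{\d}{\d t}\E f(|\bar Z_t|)\le -\lambda_*\E f(|\bar Z_t|)+f'(0)\E|\Delta_t|.
\end{equation*}
Gronwall's inequality, combined with $f(|\bar Z_0|)=0$ and $f(r)\ge c_1 r$, then delivers \eqref{EE15} with $C_*:=f'(0)/c_1$.

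The hard part will be the Eberle-type construction of $f$ tailored to the smoothing $h_\vv$: since $h_\vv$ vanishes on $[0,\vv]$, the reflection noise contributes nothing there, so contraction on that interval must come purely from the drift, which is only one-sided Lipschitz. This is addressed by taking $f$ linear on $[0,\vv]$ (so that $f''=0$ and the missing noise is immaterial) and bending $f$ sharply on a transition region so that its concavity compensates for the non-dissipative bound $L_1(1+2\varphi(R))$, with the dissipative regime $r>2R$ then providing genuine contraction via \eqref{EE11}. All resulting constants $c_1,c_2,f'(0),\lambda_*$ depend only on $L_1,L_3,R,\varphi(R)$ and the fixed smoothing parameter $\vv$, so both $C_*$ and $\lambda_*$ are independent of $\delta$, as required.
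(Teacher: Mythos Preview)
Your outline follows the paper's approach closely up to the point of applying It\^o's formula to an Eberle-type concave function of $|\bar Z_t|$, but the key step---the pointwise ODE inequality
\[
2h_\vv(r)^2 f''(r)+\bar L(r)f'(r)\le -\lambda_* f(r)
\]
---cannot hold on $(0,\vv]$ as you claim. On that interval $h_\vv\equiv 0$, so the inequality reads $\bar L(r)f'(r)\le -\lambda_* f(r)$. With $f$ linear this becomes $\bar L(r)\le -\lambda_* r$. But the only control you have on the drift for small separations comes from \eqref{EE9}, which yields $\bar L(r)\le L_1(1+2\varphi(R))>0$; there is no dissipativity there. Thus the left-hand side is strictly positive while the right-hand side is nonpositive, a contradiction. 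Your sentence ``contraction on that interval must come purely from the drift'' is exactly where the argument breaks: the drift is \emph{not} contractive for $r\in(0,2R]$, and with the reflection switched off there is nothing to compensate.

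The paper does not try to get a clean pointwise contraction inequality for fixed $\vv$. Instead it bounds the generator applied to $\phi(r)=c_1 r+1-\e^{-c_2 r}$ by
\[
-\lambda^\star\phi(r)+c_3\big(2\vv+\phi(2\vv)\big),
\]
where the additive error comes precisely from the region $\{r<2\vv\}$ on which $1-h_\vv(r)>0$; this error is then removed by letting $\vv\downarrow 0$ after Gronwall. So the smoothing parameter $\vv$ is not fixed once and for all (as you do) but sent to zero at the end, and the resulting $C_*,\lambda_*$ are independent of it. If you want to salvage your argument, you should either reproduce this $\vv\to 0$ limit, or alternatively carry the $O(\vv)$ error through Gronwall and observe that since $\bar Z_0=0$ it contributes $O(\vv/\lambda_*)$, which can be made arbitrarily small---but you must make this explicit rather than asserting a pointwise inequality that is false.
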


\begin{proof}
By noting that $(\bar W_t)_{t\ge0}$ and $(\bar B_t)_{t\ge0}$ are mutually independent and that $\Pi(\cdot)$ is an orthogonal matrix, via L\'{e}vy's characterization for Brownian motions,
we conclude that for any integer $n\ge0,$
 $$\mathscr L_{\bar X_{n\delta}}=\mathscr L_{X_{n\delta}^{\bf0}}\quad \mbox{ and } \quad \mathscr L_{\bar Y_{n\delta}}=\mathscr L_{X_{n\delta}^{\delta,{\bf0}}}.$$
Therefore, \eqref{EE15} is available
provided that there exist constants $C_{*},\lambda_{*}>0$ such that for any integer $n\ge0,$
\begin{align}\label{EE5}
\bar\E|\bar X_{n\delta}-\bar Y_{n\delta}|\le C_{*}\int_0^{n\delta}\e^{-\lambda_{*}(n\delta-s)} \E\big|b(\bar Y_s)-b^{(\delta)}(\bar Y_{\lfloor s/\delta\rfloor\delta})\big|\,\d s,
\end{align}
where $\bar \E$ is the expectation operator under the probability measure $\bar\P$, the law of $(\bar X_{n\delta},\bar Y_{n\delta})_{n\ge0}$.

For $V_a(x):=\big(a+|x|^2\big)^{1/2}$ with $a>0,$
it is  immediate to see that for all $x\in\R^d,$
\begin{align*}
\nn V_a(x)=\frac{x}{V_a(x)}  \quad  \mbox{ and } \quad
\nn^2 V_a(x)=\frac{1}{V_a(x)}I_d-\frac{x x^\intercal}{V_a(x)^3}.
\end{align*}
Then,  It\^o's formula gives that
\begin{align*}
\d  V_a(\bar Z_t)&=\bigg(\frac{1}{V_a(\bar Z_t)}\<\bar Z_t,b(\bar X_t)-b^{(\delta)}(\bar Y_{\lfloor t/\delta\rfloor\delta})\>+\frac{2ah_\vv(|\bar Z_t|)^2}{V_a(\bar Z_t)^3}\bigg)\,\d t+\frac{2h_\vv(\bar Z_t)}{V_a(\bar Z_t)}\<\bar Z_t,\d \bar W_t\>.
\end{align*}
By taking the definition of $h_\vv$ into consideration, we obviously have
\begin{align*}
\frac{x}{V_a(x)}\overset{a\to0}{\longrightarrow}\frac{x}{|x|}\I_{\{|x|\neq0\}} \quad\mbox{ and } \quad  \frac{ah_\vv(|x|)}{V_a(x)^3}\le \frac{a}{(a+\vv^2)^{\frac{3}{2}}}\overset{a\to0}{\longrightarrow}0.
\end{align*}
 Subsequently, we derive that
\begin{align*}
\d  |\bar Z_t|
&\le \I_{\{|\bar Z_t|\neq0\}}\big( \<{\bf n}(\bar Z_t),b(\bar X_t)-b(\bar Y_t)\>\,\d t+ 2h_\vv(|\bar Z_t|) \<{\bf n}(\bar Z_t),\d \bar W_t\>\big)\\
&\quad+\big|b(\bar Y_t)-b^{(\delta)}(\bar Y_{\lfloor t/\delta\rfloor\delta})\big|\,\d t,
\end{align*}
in which ${\bf n}(x):=x/|x|$ for $x\neq {\bf0}$.

In order to obtain \eqref{EE5}, the key ingredient is to modify the metric
induced by $|\cdot|$.  To this end, we introduce the following $C^2$-function:
\begin{align*}
\phi(r)=c_1r+1-\e^{-c_2r},\quad r\ge0,
\end{align*}
where for $\ell_0:=2R,$
\begin{align*}
c_1:=c_2\e^{-c_2\ell_0} \quad \mbox{ and } \quad  c_2=2\lambda_1\ell_0.
\end{align*}
A direct calculation shows that for any $r\ge0,$
\begin{align*}
 \phi'(r)=c_1+c_2\e^{-c_2r} \quad \mbox{ and } \quad \phi''(r)=-c_2^2\e^{-c_2r}.
\end{align*}
Next, by virtue of \eqref{EE9} and \eqref{EE11}, it follows that for any $x,y\in\R^d,$
\begin{equation}\label{EE14}
\begin{split}
\<x-y,b(x)-b(y)\>&\le|x-y|\cdot|b(x)-b(y)| \I_{\{x\in B_R\}\cap\{y\in B_R\}}\\
&\quad+ \<x-y,b(x)-b(y)\>\I_{\{x\in B_R^c\}\cup\{y\in B_R^c\}} \\
&\le \lambda_1|x-y|^2\I_{\{|x-y|\le 2R\}}-\lambda_2|x-y|^2\I_{\{|x-y|>
2R\}},
\end{split}
\end{equation}
where $\lambda_1:=L_1(1+2\varphi(R))$ and $\lambda_2:=L_3(1+\varphi(R))$.
Thus, applying It\^o's formula and making use of \eqref{EE14} and $ \phi'\le 2c_2$
yields that
\begin{equation}\label{EE2}
\begin{split}
\d \phi(|\bar Z_t|)&\le \I_{\{|\bar Z_t|\neq0\}}\big((c_1+c_2\e^{-c_2|\bar Z_t|})\<{\bf n}(\bar Z_t),b(\bar X_t)-b(\bar Y_t)\>-2c_2^2\e^{-c_2|\bar Z_t|}h_\vv(|\bar Z_t|)^2\big)\,\d t\\
&\quad+ 2c_2\big|b(\bar Y_t)-b^{(\delta)}(\bar Y_{\lfloor t/\delta\rfloor\delta})\big|\,\d t+\d M_t\\
&\le \Lambda_\vv(|\bar Z_t|) \,\d t+ 2c_2\big|b(\bar Y_t)-b^{(\delta)}(\bar Y_{\lfloor t/\delta\rfloor\delta})\big|\,\d t+\d M_t,
\end{split}
\end{equation}
where $(M_t)_{t\ge0}$ is a martingale and
\begin{align*}
\Lambda_\vv(r):=\big((c_1+c_2\e^{-c_2r})(\lambda_1\I_{\{r\le\ell_0\}}-\lambda_2\I_{\{r>\ell_0\}})r-2c_2^2\e^{-c_2r}h_\vv(r)^2\big),\quad r\ge0.
\end{align*}
Notice from the   definitions of $c_1$ and $c_2$ that
\begin{align*}
\Lambda_\vv(r)&\le 2c_2\e^{-c_2r} \big( \lambda_1 r-c_2  h_\vv(r)^2\big)\I_{\{r\le\ell_0\}} - c_1 \lambda_2r\I_{\{r>\ell_0\}}\\
&= 2c_2\e^{-c_2r} \big(\lambda_1 r -c_2 \big)h_\vv(r)^2\I_{\{r\le\ell_0\}} - c_1 \lambda_2r\I_{\{r>\ell_0\}}+2\lambda_1 c_2\e^{-c_2r} r(1-h_\vv(r)^2)\I_{\{r\le\ell_0\}} \\
&\le - c_2^2\e^{-c_2\ell_0} h_\vv(r)^2\I_{\{r\le\ell_0\}}- c_1 \lambda_2r\I_{\{r>\ell_0\}}+4\lambda_1 c_2 r (1-h_\vv(r)),
\end{align*}
where in the last display we used $h_\vv\in[0,1].$ The previous estimate,
in addition to the fact that
\begin{align*}
\phi(r)\le 1+c_1\ell_0,\quad 0\le r\le\ell_0; \quad \frac{r}{\phi(r)}\ge\frac{\ell_0}{c_1\ell_0+1-\e^{-c_2\ell_0}}>\frac{\ell_0}{c_1\ell_0+1},\quad r\ge\ell_0,
\end{align*}
implies that for some constant $c_3>0$
\begin{align*}
\Lambda_\vv(r)
&\le -\frac{c_1c_2 }{ 1+c_1\ell_0 }\phi(r) \I_{\{r\le\ell_0\}}-\frac{ c_1 \lambda_2\ell_0}{1+c_1\ell_0} \phi(r) \I_{\{r>\ell_0\}}+ c_3(\phi(r)+r)(1-h_\vv(r) ) \\
&\le -\lambda^\star \phi(r)+c_3(2\vv+\phi(2\vv)),
\end{align*}
in which $\lambda^\star:= c_1\ell_0( (2\lambda_1) \wedge\lambda_2)/(1+c_1\ell_0).$ As a consequence, by approaching $\vv\to 0$, we deduce from \eqref{EE2}, along with $\bar X_0=\bar Y_0$,
that
\begin{align*}
\bar \E \phi(|\bar Z_t|)\le 2c_2\int_0^t\e^{-\lambda^\star(t-s)} \E\big|b(\bar Y_s)-b^{(\delta)}(\bar Y_{\lfloor s/\delta\rfloor\delta})\big|\,\d s.
\end{align*}
Whence, \eqref{EE5} follows by noting that there exist constants $c_4,c_5>0$ such that $c_4r\le\phi(r)\le c_5r$ for all $r\ge0.$
\end{proof}

In the sequel,  we  claim that $(\bar Y_{n\delta}^{{\bf0}})_{n\ge0}$, determined in \eqref{EE17}, has
moments bounded uniformly in time.

\begin{lemma}\label{lem5.2}
Under \eqref{EE9} and \eqref{EE11}, for any $p>0$, there exists a constant $C_p>0$ such that for any  $\delta\in(0,\delta_4] $ and integer $n\ge0$,
\begin{align}\label{EE18}
 \E\big|\bar Y_{n\delta}^{{\bf0}}\big|^p\le C_p.
\end{align}
\end{lemma}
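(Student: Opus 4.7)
The plan is to reduce the claim to a uniform moment estimate for the tamed EM scheme \eqref{ER1} at grid points, and then to extend the $p=2$ Lyapunov bound of Lemma \ref{Lya} to higher moments.

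First, since $(\bar W_t)_{t\ge 0}$ and $(\bar B_t)_{t\ge 0}$ are mutually independent standard Brownian motions and $\Pi(\cdot)$ defined in \eqref{EWW} is orthogonal, the infinitesimal covariance of the diffusion coefficient in the second line of \eqref{EE17} equals $\Pi(\bar Z_t) h_\vv(|\bar Z_t|)^2 \Pi(\bar Z_t)^\intercal + (1 - h_\vv(|\bar Z_t|)^2) I_d = I_d$. By L\'evy's characterization there exists a standard $d$-dimensional Brownian motion $(\tilde W_t)_{t\ge 0}$ such that $\d \bar Y_t = b^{(\delta)}(\bar Y_{\lfloor t/\delta\rfloor\delta})\,\d t + \d \tilde W_t$, $\bar Y_0 = {\bf 0}$. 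Integrating on $[n\delta,(n+1)\delta)$ and sampling at grid points shows that $(\bar Y_{n\delta})_{n\ge 0}$ has the same law as $(X_{n\delta}^{\delta,{\bf 0}})_{n\ge 0}$ from \eqref{ER1} started at ${\bf 0}$. The claim is therefore equivalent to showing $\sup_n \E|X_{n\delta}^{\delta,{\bf 0}}|^p \le C_p$ for every $p > 0$.

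For $p \in (0,2]$, this is immediate from Lemma \ref{Lya}: Subsection \ref{subsec:tamedEM} already verifies that the tamed EM scheme satisfies $({\bf H}_1)$ and $({\bf H}_2)$ with $\pi^{(\delta)}(x)=x$, and $b^{(\delta)}({\bf 0}) = b({\bf 0})$ ensures \eqref{binfty}. Iterating
$$\E\bigl[1+|X_{(n+1)\delta}^{\delta,{\bf 0}}|^2 \,\big|\, X_{n\delta}^{\delta,{\bf 0}}\bigr] \le (1-K_R^*\delta/2)\bigl(1+|X_{n\delta}^{\delta,{\bf 0}}|^2\bigr) + c_0 \delta$$
from the initial value ${\bf 0}$ gives $\sup_n \E\,|X_{n\delta}^{\delta,{\bf 0}}|^2 \le 2c_0/K_R^*$, and Jensen's inequality covers $0 < p < 2$. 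For integer $k \ge 2$, I would establish inductively the generalized Lyapunov inequality
$$\E\bigl[(1+|X_{(n+1)\delta}^{\delta}|^2)^k \,\big|\, X_{n\delta}^{\delta}\bigr] \le (1 - \lambda_k \delta)(1+|X_{n\delta}^{\delta}|^2)^k + C_k \delta$$
with constants $\lambda_k, C_k > 0$ independent of $\delta \in (0, \delta_4]$. Setting $Y := X_{n\delta}^{\delta}$ and $\Delta := |X_{(n+1)\delta}^{\delta}|^2 - |Y|^2 = 2\delta\<Y, b^{(\delta)}(Y)\> + \delta^2|b^{(\delta)}(Y)|^2 + 2\delta^{1/2}\<Y+\delta b^{(\delta)}(Y), \xi_{n+1}\> + \delta|\xi_{n+1}|^2$, the binomial expansion of $(1+|Y|^2+\Delta)^k$ together with $\E\xi_{n+1} = {\bf 0}$ yields a leading contribution $k(1+|Y|^2)^{k-1}\bigl(2\delta\<Y, b^{(\delta)}(Y)\> + d\delta\bigr)$. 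The dissipativity $\<x, b^{(\delta)}(x)\> \le -K_R^*|x|^2 + \mathrm{const.}$ derived from \eqref{EE11} (cf.\ Subsection \ref{subsec:tamedEM}) makes this term negative and of order $-\delta(1+|Y|^2)^k$ on $B_R^c$. The remaining contributions of $\Delta^j$ with $j \ge 2$ are controlled by combining $|b^{(\delta)}(x)| \lesssim \delta^{-\theta}(1+|x|)$, Gaussian moments $\E|\xi|^{2m} < \infty$, and the induction hypothesis applied to the smaller exponents $(1+|Y|^2)^{k-j}$. Iterating the displayed inequality from ${\bf 0}$ then yields $\sup_n \E(1+|X_{n\delta}^{\delta,{\bf 0}}|^2)^k < \infty$, and Jensen covers all $p \in (2(k-1), 2k]$.

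The main technical obstacle is verifying that $\lambda_k, C_k$ can be taken independently of $\delta$. The taming restriction $\theta \in (0,1/2)$ is exactly what makes this work: the drift self-interaction contributes $\delta^2|b^{(\delta)}(Y)|^2 = O(\delta^{2-2\theta}(1+|Y|^2))$ which is $o(\delta\cdot(1+|Y|^2))$, and the cross terms of the form $\delta^{j/2}|b^{(\delta)}(Y)|^\ell$ arising in higher powers of $\Delta$ all carry an overall factor which is $o(\delta)$ per unit of $(1+|Y|^2)^{k-1}$. Hence they are absorbed by the dissipative contribution on $B_R^c$, mirroring the cancellation already exploited in the proof of Lemma \ref{Lya}, and the iteration delivers the uniform bound \eqref{EE18}.
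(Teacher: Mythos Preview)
Your proposal is correct and follows essentially the same strategy as the paper: reduce to the tamed EM scheme at grid points via L\'evy's characterization, then obtain a Lyapunov-type recursion for $(1+|\cdot|^2)^k$ by binomial expansion and iterate. The paper organizes the higher-moment step a bit differently: it first derives the \emph{pointwise} bound $|\bar Y_{(n+1)\delta}|^2 \le (1-L_3\delta/2)|\bar Y_{n\delta}|^2 + C_3\delta + \Psi(n,\delta)$ directly from \eqref{EE9} and \eqref{EE11} (using the explicit tamed structure of $b^{(\delta)}$), and only then raises to the $q$-th power, so the contraction factor $(1-L_3\delta/2)^q$ sits automatically on the leading term and the remainder $\Psi$ is at most linear in $|\bar Y_{n\delta}|$ once $|b^{(\delta)}(x)|\delta \lesssim 1+|x|$ is used. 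Your route instead invokes Lemma~\ref{Lya} for $p=2$ and extracts the dissipativity from the $j=1$ term of $(1+|Y|^2+\Delta)^k$, which is equivalent but needs slightly more bookkeeping for the cross terms. One small imprecision: your reference to ``the induction hypothesis applied to the smaller exponents $(1+|Y|^2)^{k-j}$'' does not fit the \emph{conditional} inequality you state (those are random quantities, not moments already known to be bounded); what actually closes the argument is exactly what you say in your final paragraph---Young's inequality together with $|b^{(\delta)}(x)|\lesssim \delta^{-\theta}(1+|x|)$ and $\theta<1/2$, which is also how the paper handles the remainder.
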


\begin{proof}
For  notational brevity, we shall write $(\bar Y_{n\delta})_{n\ge0} $  in lieu of $(\bar Y_{n\delta}^{{\bf0}})_{n\ge0}$ as long as   the initial value $\bar Y_0={\bf0}$ is not emphasized.
According to the definition of $\delta_4$ given in \eqref{EE16}, we obviously have for any $\delta\in(0,\delta_4]$,
\begin{align}\label{EE19}
(1+L_1^2)\delta^{1-\theta}\le L_3/2\quad \mbox{ and } \quad L_3\delta /2
\le1.
\end{align}

Below, we shall fix $\delta\in(0,\delta_4]$ so that
\eqref{EE19} is valid and intend to show that
\eqref{EE18} holds true for  any integer $p\ge6,$
which, by H\"older's inequality, will imply the desired assertion for any integer $p>0$.
Via L\'{e}vy's characterization for Brownian motions, we infer that for any $t\ge0,$
\begin{align}\label{EE20}
\hat W_t^\vv:=\int_0^t\Pi( \bar Z_s )h_\vv(|\bar Z_s|)\d \bar W_s+\int_0^t\big(1-h_\vv(|\bar Z_s|)^2\big)^{{1}/{2}}\,\d \bar B_s
\end{align}
is a $d$-dimensional Brownian motion. Thus, the discrete-time version of $(\bar Y_t)_{t\ge0}$ can be written as follows: for any integer $n\ge0, $
\begin{align*}
\bar Y_{(n+1)\delta}=\bar Y_{n\delta}+b^{(\delta)}(\bar Y_{n\delta})\delta+\triangle \hat W_{n\delta}^\vv.
\end{align*}
It is easy to see that
\begin{align}\label{EE20}
\big|\bar Y_{(n+1)\delta}\big|^2=&\big|\bar Y_{n\delta}\big|^2+\Lambda (\bar Y_{n\delta})\delta+\Psi(n,\delta),
\end{align}
where
\begin{align*}
\Lambda (x):&=\big|b^{(\delta)}(x)\big|^2\delta+2\<x,b^{(\delta)}(x)\>,\quad x\in\R^d,\\
\Psi(n,\delta):&=\big|\triangle \hat W_{n\delta}^\vv\big|^2+2\<\bar Y_{n\delta}+b^{(\delta)}(\bar Y_{n\delta})\delta,\triangle \hat W_{n\delta}^\vv\>.
\end{align*}
According to the definition of $b^{(\delta)}$ given in \eqref{E7}, it follows that for all $x\in\R^d,$
\begin{align*}
\Lambda (x)=\frac{1}{1+\delta^{\theta}\varphi(|x|)}\bigg(2 \<x,b (x)\>+\frac{|b (x) |^2\delta}{1+\delta^{\theta}\varphi(|x|)}\bigg).
\end{align*}
Next, from \eqref{EE9} and \eqref{EE11}, there exists a     constant  $C_1 >0 $ such that
\begin{align*}
|b(x)|^2\le (L_1^2+1)\big(1+\varphi(|x|)\big)^2|x|^2+C_1,\quad x\in\R^d,
\end{align*}
and subsequently  there exist constants $C_2,C_3>0$ such that
\begin{align*}
2\<x,b(x)\>+\frac{|b(x)|^2\delta}{1+\delta^{\theta}\varphi(|x|)}
&\le   C_2-L_3(1+\varphi(|x|))|x|^2  +\frac{|b(x)|^2\delta}{1+\delta^{\theta}\varphi(|x|)}\\
&\le C_3+\bigg(- L_3 +\frac{ (L_1^2+1)  (1+\varphi(|x|) )\delta}{1+\delta^{\theta}\varphi(|x|)}\bigg)(1+\varphi(|x|))|x|^2\\
&\le C_3+\big(- L_3 +  (L_1^2+1)  \delta^{1-\theta}\big)(1+\varphi(|x|))|x|^2\\
&\le C_3-\frac{1}{2}L_3(1+\varphi(|x|))|x|^2,\quad x\in\R^d,
\end{align*}
where
the last inequality holds true from \eqref{EE19}. Thus, we find that
\begin{align*}
\Lambda(x)&\le \frac{1}{1+\delta^{\theta}\varphi(|x|)}\Big(C_3-\frac{1}{2}L_3(1+\varphi(|x|))|x|^2\Big) \le C_3-\frac{1}{2}L_3|x|^2,\quad x\in\R^d,
\end{align*}
where in the second inequality we used the fact that, for $\delta\in(0,1], r\mapsto (1+r)/(1+\delta^\theta r)$ is increasing on the interval $[0,\8)$. The above estimate, along with \eqref{EE20},  enables us to derive that
\begin{align*}
\big|\bar Y_{(n+1)\delta}\big|^2\le (1-L_3\delta/2)\big|\bar Y_{n\delta}\big|^2+C_3\delta+\Psi(n,\delta).
\end{align*}
The estimate above, along with the binomial theorem,
 further implies that for any integer $q\ge3,$
\begin{align}\label{moment_bound}
\big|\bar Y_{(n+1)\delta}\big|^{2q}&\le (1-L_3\delta/2)^q\big|\bar Y_{n\delta}\big|^{2q}+q(1-L_3\delta/2)^{q-1}\big|\bar Y_{n\delta}\big|^{2(q-1)}\big(C_3\delta+\Psi(n,\delta)\big)\\
&\quad+\sum_{k=0}^{q-2}C_q^k(1-L_3\delta/2)^k\big|\bar Y_{n\delta}\big|^{2k}\big(C_3\delta+\Psi(n,\delta)\big)^{q-k},
\end{align}
where the quantity  $\<\bar Y_{n\delta}, \triangle \hat W_{n\delta}^\vv\>$ involved in $\Psi(n,\delta)$ is a lower order term (providing the order $\delta^{1/2}$).
Since $\bar Y_{n\delta}$ is independent of $\triangle \hat W_{n\delta}^\vv$,
we have
\begin{align*}
\E\big(\big|\bar Y_{n\delta}\big|^{2(q-1)}\Psi(n,\delta)\big)=\E\big(\big|\bar Y_{n\delta}\big|^{2(q-1)}\big|\triangle \hat W_{n\delta}^\vv\big|^2 \big)=d\delta\E\big|\bar Y_{n\delta}\big|^{2(q-1)}.
\end{align*}
Next, \eqref{EE9} yields that there are constants $C_4,C_5>0 $ such that
\begin{align}\label{EE21}
|b^{(\delta)}(x)|\delta\le C_4\delta^{1-\theta}|x|+C_5\delta\le C_4|x|+C_5,\quad x\in\R^d.
\end{align}
In the  last term on the right hand side of \eqref{moment_bound},
the degree of the polynomial with respect to $|\bar Y_{n\delta}\big|$ is $2q-4$ at most (which obviously is a lower order
term compared with the leading term $|\bar Y_{n\delta}\big|^{2q}$), and moreover the moment of the polynomial with respect to $|\triangle \hat W_{n\delta}^\vv|$ (where the underlying degree is $2$ at least) provides at least the order $\delta$. Hence, we can deduce
from these facts and the Young inequality
that for some constant $C_6>0,$
\begin{align*}
\E\big|\bar Y_{(n+1)\delta}\big|^{2q}&\le (1-L_3\delta/4) \E\big|\bar Y_{n\delta}\big|^{2q}+ C_6\delta.
\end{align*}
Whereafter, an inductive argument, along with $\bar Y_0={\bf0}$,  implies that for any integer $n\ge0,$
\begin{align*}
\E\big|\bar Y_{(n+1)\delta}\big|^{2q}\le 4C_6/L_3.
\end{align*}
Consequently, \eqref{EE18} is attainable for any $p\ge6.$
\end{proof}

Finally, as we explained at the beginning of this section,
with the aid of Proposition \ref{pro} and Lemma \ref{lem5.2}, we carry out the proof of Theorem \ref{IPM}.
\begin{proof}[Proof of Theorem $\ref{IPM}$]
Note that,  due to \eqref{EE14}, we can verify the assumptions of \cite[Corollary 2]{Eberle} and conclude
 that there exist constants $C^*_4,\lambda_4>0$ such that for all $t\ge0$ and $\mu,\nu\in\mathscr P_1(\R^d)$,
\begin{align}\label{EE6}
\mathbb W_1\big(\mathscr L_{X_t^\mu},\mathscr L_{X_t^\nu}\big)\le C^*_4\e^{-\lambda_4 t}\mathbb W_1(\mu,\nu),
\end{align}
where $(X_t^\mu)_{t\ge0}$ stands for the solution to \eqref{E1} with the initial distribution $\mathscr L_{X_0}=\mu.$
Subsequently, via the Banach fixed point theorem,
the Markov process $(X_t)_{t\ge0}$
admits a unique invariant probability measure
$\pi\in\mathscr P_1(\R^d) $ with a finite first moment.
Likewise, by means of \eqref{W}, $(X_{n\delta}^\delta)_{n\ge0}$, which is a time-homogeneous Markov chain,  also admits a unique invariant probability measure $\pi^\delta$
 with a finite first moment.

Recall that due to \eqref{main_estimate_section6},
\begin{equation*}
\mathbb W_1\big(\pi,\pi^{\delta}\big) \le \varphi_1(n,\delta)+\varphi_2(n,\delta)+\varphi_3(n,\delta).
\end{equation*}
Below, we shall write $(X_{n\delta}^{x})_{n\ge0}$ and $(X_{n\delta}^{\delta,x})_{n\ge0}$
instead of  $(X_{n\delta}^{\mu})_{n\ge0}$ and  $(X_{n\delta}^{\delta,\mu})_{n\ge0}$ in case of $\mu=\delta_x$ for $x\in\R^d.$
By invoking \eqref{EE6} and Theorem \ref{W1}, we have
\begin{equation*}
\begin{split}
\varphi_1(n,\delta)+\varphi_3(n,\delta)&\le \int_{\R^d\times\R^d}\mathbb W_1\Big(\mathscr L_{X_{n\delta}^{x}},\mathscr L_{X_{n\delta}^{y}}\Big)\pi(\d x)\delta_{{\bf0}}(\d y)\\
&\quad+\int_{\R^d\times\R^d}\mathbb W_1\Big(\mathscr L_{X_{n\delta}^{\delta,x}},\mathscr L_{X_{n\delta}^{\delta,y}}\Big)\pi^\delta(\d x)\delta_{{\bf0}}(\d y)\\
&\le  C^*_4\e^{-\lambda_4 n\delta}\int_{\R^d\times\R^d}|x-y|\pi(\d x)\delta_{{\bf0}}(\d y)+C_3^*\e^{-\lambda_3 n\delta}\int_{\R^d\times\R^d}|x-y|\pi^\delta(\d x)\delta_{{\bf0}}(\d y)\\
&=
C^*_4\e^{-\lambda_4 n\delta}\int_{\R^d}|x|\pi(\d x) +C_3^*\e^{-\lambda_3 n\delta}\int_{\R^d}|x|\pi^{\delta}(\d x),
\end{split}
\end{equation*}
where $\int_{\R^d}|x|\pi(\d x)$ and $\int_{\R^d}|x|\pi^{\delta}(\d x)$ are both finite, since $\pi$ and $\pi^{\delta}$ have finite first moments.
This thus yields $\varphi_1(n,\delta)+\varphi_3(n,\delta)\to0$ as $n\to\8.$ Next, taking \eqref{EE15} into consideration enables us to derive that for any integer $n\ge0,$
 \begin{align}\label{EE23}
 \varphi_2(n,\delta)\le  C_*\int_0^{n\delta}\e^{-\lambda_*(n\delta-s)} \E\big|b(\bar Y_s^{{\bf0}})-b^{(\delta)}(\bar Y_{\lfloor s/\delta\rfloor\delta}^{{\bf0}})\big|\,\d s.
 \end{align}

According to the definition of $b^{(\delta)}$ given in \eqref{E7}, we infer from \eqref{EE9} and
 \eqref{ED}
that there exist constants $c_{1},c_2>0$ such that for any $x,y\in\R^d,$
\begin{align*}
\big|b(x)-b^{(\delta)}(y)\big|&\le \big|b(x)-b(y)\big|+\big|b(y)- b^{(\delta)}(y)\big|\\
&\le L_1\big(1+\varphi(|x|)+\varphi(|y|)\big)|x-y|+c_1\delta^\theta  \big(1+|y|+|y|\varphi(|y|)\big)\varphi(|y|)
\\
&\le c_2\big(1+|x|^{l_*}+|y|^{l_*}\big)|x-y|+c_2\delta^\theta\big(1+|y|^{1+2l^{*}}\big).
\end{align*}
Therefore,  H\"older's inequality
implies  that
there exists a constant $c_3>0$ such that  for any $t\ge0,$
 \begin{equation}\label{EE22}
 \begin{split}
&\E\big|b\big(\bar Y_t^{{\bf0}}\big)-b^{(\delta)}\big(\bar Y_{\lfloor t/\delta\rfloor\delta}^{{\bf0}}\big)\big|\\
&\le
c_3\Big(1+\big(\E\big|\bar Y_t^{{\bf0}}-\bar Y_{\lfloor t/\delta\rfloor\delta}^{{\bf0}}\big|^{2l_*}\big)^{{1}/{2}}+
\big(\E\big|\bar Y_{\lfloor t/\delta\rfloor\delta}^{{\bf0}}\big|^{2l_*}\big)^{{1}/{2}} \Big)
\big(\E\big|\bar Y_t^{{\bf0}}-\bar Y_{\lfloor t/\delta\rfloor\delta}^{{\bf0}}\big|^2\big)^{{1}/{2}} \\
&\quad+c_3\delta^\theta\big(1+\E\big|\bar Y_{\lfloor t/\delta\rfloor\delta}^{{\bf0}}\big|^{1+2l^{*}}\big).
\end{split}
\end{equation}
Furthermore, note that
\begin{align*}
\bar Y_t^{{\bf0}}-\bar Y_{\lfloor t/\delta\rfloor\delta}^{{\bf0}}=b^{(\delta)}\big(\bar Y_{\lfloor t/\delta\rfloor\delta}^{{\bf0}}\big)\big(t-\lfloor t/\delta\rfloor\delta\big)+\hat W_t^\vv-\hat W_{\lfloor t/\delta\rfloor\delta}^\vv,
\end{align*}
where $(\hat W_t^\vv)_{t\ge0}$ was defined in \eqref{EE20}. Thus, by virtue of \eqref{EE18} and the first inequality \eqref{EE21}, it follows  that for some constants $c_4,c_5>0$ and any $t\ge0,$
\begin{align*}
\E\big|\bar Y_t^{{\bf0}}-\bar Y_{\lfloor t/\delta\rfloor\delta}^{{\bf0}}\big|^{2l_*}\le c_4\quad \mbox{ and } \quad
\E\big|\bar Y_t^{{\bf0}}-\bar Y_{\lfloor t/\delta\rfloor\delta}^{{\bf0}}\big|^2\le c_5\delta
.
\end{align*}
As a consequence, by plugging the estimates above into \eqref{EE22}, we deduce from \eqref{EE23} there exists a constant $c_6>0$ such that
$ \varphi_2(n,\delta)\le c_6\delta^\theta$ for any integer $n\ge0$.

On the basis of the preceding analysis, we have demonstrated
the desired assertion \eqref{EE13}.
\end{proof}

\  \

\noindent \textbf{Acknowledgements.}
The research of Jianhai Bao is supported
by the National Key R\&D Program of China (2022YFA1006000) and the National Natural Science Foundation of China (No.\ 12071340).
The research of Jian Wang is supported by the National Key R\&D Program of China (2022YFA1006003) and  the National Natural Science Foundation of China (Nos.\ 12071076 and 12225104).

\end{document}